\newtheorem{theorem}{Theorem}[section]
\newtheorem{Theorem}{Theorem}
\newtheorem{lemma}[theorem]{Lemma}
\newtheorem{proposition}[theorem]{Proposition}
\newtheorem{corollary}[theorem]{Corollary}
\theoremstyle{definition}
\newtheorem{definition}[theorem]{Definition}
\theoremstyle{remark}
\newtheorem{remark}[theorem]{Remark}
\newtheorem{example}[theorem]{Example}
\def\t{\mathbf{t}}
\def\Z{\mathbb{Z}}
\def\F{\mathbb{F}}
\def\Q{\mathbb{Q}}
\def\cA{\mathcal{A}}
\def\cAr{\mathcal{A}^{\textup{red}}}
\def\cE{\mathcal{E}}
\def\cEr{\mathcal{E}^{\textup{red}}}
\def\cH{\mathcal{H}}
\def\cI{\mathcal{I}}
\def\cC{\mathcal{C}}
\def\HH{\mathbb{H}}
\def\cI{\mathcal{I}}
\def\lk{\textup{lk}}
\def\min{\textup{min}}
\def\aa{\mathbf{a}}
\def\hh{\mathbf{h}}
\def\cM{\mathcal{M}}
\def\cT{\mathcal{T}}
\def\sce{\textsc{e}}
\def\sct{\textsc{t}}
\def\bell{\boldsymbol{\ell}}
\DeclareMathOperator{\HFL}{HFL^{--}}
\DeclareMathOperator{\HFK}{HFK^{--}}
\DeclareMathOperator{\Ker}{Ker}
\DeclareMathOperator{\HFhat}{\widehat{HF}}
\DeclareMathOperator{\HFKhat}{\widehat{HFK}}
\DeclareMathOperator{\HFLhat}{\widehat{HFL}}
\DeclareMathOperator{\bideg}{bideg}
\DeclareMathOperator{\CFD}{\widehat{CFD}}
\title{Cable links and L-space surgeries}
\author[Eugene Gorsky]{Eugene Gorsky}
\thanks{The first author was partially supported by RFBR grant 13-01-00755 and NSF grant DMS-1403560.}
\address {Department of Mathematics, Columbia University, 2990 Broadway \\ New York, NY 10027 }
\address {Department of Mathematics, UC Davis, One Shields Ave\\ Davis, CA 95616}
\address {International Laboratory of Representation Theory and Mathematical Physics\\
NRU-HSE, 7 Vavilova St.\\ 
Moscow, Russia 117312}
\email{egorsky@math.columbia.edu}
\author[Jennifer Hom]{Jennifer Hom}
\thanks{The second author was partially supported by NSF grant DMS-1307879.}
\address {Department of Mathematics, Columbia University, 2990 Broadway \\ New York, NY 10027}
\email{hom@math.columbia.edu}
\numberwithin{equation}{section}
\begin{document}

\begin{abstract}
An L-space link is a link in $S^3$ on which all sufficiently large integral surgeries are L-spaces. We prove that for $m, n$ relatively prime, the $r$-component cable link $K_{rm,rn}$ is an L-space link if and only if $K$ is an L-space knot and $n/m \geq 2g(K)-1$. We also compute $\HFL$ and $\HFLhat$ of an L-space cable link in terms of its Alexander polynomial. As an application, we confirm a conjecture of Licata \cite{Licata} regarding the structure of $\HFLhat$ for $(n,n)$ torus links.
\end{abstract}

\maketitle

\section{Introduction}
Heegaard Floer homology is a package of $3$-manifold invariants defined by Ozsv\'ath and Szab\'o \cite{OSproperties, OS3mfds}. In its simplest form, it associates to a closed $3$-manifold $Y$ a graded vector space $\HFhat(Y)$. For a rational homology sphere $Y$, they show that
\[ \dim \HFhat(Y) \geq |H_1(Y; \Z)|. \]
If equality is achieved, then $Y$ is called an \emph{L-space}. 

A knot $K \subset S^3$ is an \emph{L-space knot} if $K$ admits a positive L-space surgery. Let $S^3_{p/q}(K)$ denote $p/q$ Dehn surgery along $K$. If $K$ is an L-space knot, then $S^3_{p/q}(K)$ is an L-space for all $p/q \geq 2g(K)-1$, where $g(K)$ denotes the Seifert genus of $K$ \cite[Corollary 1.4]{OSrational}.
A link $L \subset S^3$ is an \emph{L-space link} if all sufficiently large integral surgeries on $L$ are L-spaces. In contrast to the knot case, if $L$ admits a positive L-space surgery, it does not necessarily follow that all sufficiently large surgeries are also L-spaces; see \cite[Example 2.3]{Liu}.

For relatively prime integers $m$ and $n$, let $K_{m,n}$ denote the $(m, n)$ cable of $K$, where $m$ denotes the longitudinal winding. Without loss of generality, we will assume that $m>0$. Work of Hedden \cite{HeddencablingII} (``if'' direction) and the second author \cite{Homcabling} (``only if'' direction) completely classifies L-space cable knots.

\begin{Theorem}[\cite{HeddencablingII,Homcabling}]
\label{thm:cableknot}
Let $K$ be a knot in $S^3$, $m>1$ and $\gcd(m,n)=1$. The cable knot $K_{m, n}$ is an L-space knot if and only if $K$ is an L-space knot and $n/m> 2g(K)-1$.
\end{Theorem}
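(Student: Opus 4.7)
The plan is to prove the two implications by quite different methods.

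For the \emph{if} direction, the main input is the classical cable-surgery identity
$$S^3_{mn}(K_{m,n}) \;\cong\; L(m,n) \;\#\; S^3_{n/m}(K),$$
which follows from the Seifert fibered structure of a neighborhood of the cable pattern. Under the hypothesis that $K$ is an L-space knot with $n/m > 2g(K)-1$, the summand $S^3_{n/m}(K)$ is an L-space by the Ozsv\'ath--Szab\'o surgery result cited in the introduction, while $L(m,n)$ is always a lens space and hence an L-space. Since $\HFhat$ of a connected sum is the tensor product of the factors' $\HFhat$, the whole manifold is an L-space. The cable genus formula $g(K_{m,n}) = mg(K) + (m-1)(n-1)/2$ shows that in this range $mn \geq 2g(K_{m,n})-1$, and the Ozsv\'ath--Szab\'o rational surgery formula then promotes this single L-space surgery to L-space surgeries at all larger integer slopes, making $K_{m,n}$ an L-space knot.

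For the \emph{only if} direction, the approach goes through the concordance invariants $\tau$ and $\varep$ extracted from $\CFKi$. An L-space knot satisfies $\varep = 1$ and $\tau = g$. There is a cabling formula expressing $\tau(K_{m,n})$ and $\varep(K_{m,n})$ in terms of $\tau(K)$, $\varep(K)$, and $(m,n)$: if $\varep(K) \in \{-1, 0\}$, or if $\varep(K) = 1$ but $n/m \leq 2\tau(K)-1$, then this formula forces $\tau(K_{m,n}) < g(K_{m,n})$, contradicting the L-space knot hypothesis on $K_{m,n}$. Thus $\varep(K) = 1$ and $n/m > 2\tau(K)-1$. A structural analysis of $\CFKi(K_{m,n})$ in terms of $\CFKi(K)$ then shows that the staircase shape of $\CFKi(K_{m,n})$ forces $\CFKi(K)$ itself to be a staircase, so that $K$ is an L-space knot with $\tau(K) = g(K)$; combined with $n/m > 2\tau(K)-1$ this yields $n/m > 2g(K)-1$.

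The hard part will be the only-if direction: the cabling formulas for $\tau$ and $\varep$ rest on a detailed analysis of how staircase summands and boxes in $\CFKi(K)$ combine under the $m$-fold tensor product model of the cable complex, and the final step---deducing that $\CFKi(K)$ is itself a pure staircase from the same property of $\CFKi(K_{m,n})$---amounts to a structure theorem for cable chain complexes. The if direction, by contrast, reduces to a single connected-sum computation once the surgery identity is in hand.
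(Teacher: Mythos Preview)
The paper does not give its own proof of this theorem: it is stated as background, with the two implications attributed to \cite{HeddencablingII} and \cite{Homcabling} respectively, and is then used as a black box (for instance in the proof of Theorem~\ref{thm:cablelink} and in Lemma~\ref{HFL after l}). So there is no in-paper argument to compare against.

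That said, your sketch tracks the actual proofs in the cited references fairly closely. For the \emph{if} direction, Hedden's argument is exactly the connected-sum surgery identity you wrote down, together with the genus computation showing $mn \ge 2g(K_{m,n})-1$; your outline here is correct and complete at the level of a sketch. For the \emph{only if} direction, Hom's argument does go through the invariants $\tau$ and $\varep$ and the cabling formulas for them, as you say; the one point where your summary is a bit glib is the final step, where you write that ``the staircase shape of $\CFKi(K_{m,n})$ forces $\CFKi(K)$ itself to be a staircase''. In Hom's paper this is not obtained by a direct comparison of $\CFKi(K_{m,n})$ with a tensor-power model of $\CFKi(K)$; rather, the cabling formulas for $\tau$ and $\varep$ are established via bordered Floer homology (computing $\CFD$ of the knot complement and pairing with the cable pattern), and the conclusion that $K$ is an L-space knot is then drawn from the resulting constraints. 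Your description of this step as ``a structural analysis of $\CFKi(K_{m,n})$ in terms of $\CFKi(K)$'' is not wrong in spirit, but it understates where the real work lies.
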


\begin{remark}
Note that when $m=1$, we have that $K_{1,n}=K$ for all $n$.
\end{remark}

\noindent We generalize this theorem to cable links with many components. Throughout the paper, we assume that each component of a cable link is oriented in the same direction.

\begin{Theorem}\label{thm:cablelink}
Let $K$ be a knot in $S^3$ and $\gcd(m,n)=1$. The $r$-component cable link $K_{rm, rn}$ is an L-space link if and only if $K$ is an L-space knot and $n/m \geq 2g(K)-1$.
\end{Theorem}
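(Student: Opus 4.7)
I would prove the two directions separately.

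\emph{``Only if''.} The first step is the general fact that every component of an L-space link is itself an L-space knot; this follows from the surgery exact triangle by choosing a very large framing on one component, which realizes a large surgery on the complementary sublink as an L-space. Applied to $K_{rm,rn}$, whose components are all isotopic to $K_{m,n}$, Theorem~\ref{thm:cableknot} immediately yields, when $m>1$, that $K$ is an L-space knot with $n/m>2g(K)-1$, hence $n/m\geq 2g(K)-1$. The residual case $m=1$ produces only that $K$ is an L-space knot; to obtain $n\geq 2g(K)-1$ I would examine the two-component sublink $K_{2,2n}$ and show, by a direct computation of the $H$-function from Hedden's cabling formula, that it violates the L-space link criterion when $n<2g(K)-1$.

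\emph{``If''.} Assuming $K$ is an L-space knot with $n/m\geq 2g(K)-1$, I would compute $\HFL(K_{rm,rn})$ using the bordered Heegaard Floer package for cabling: tensor $\widehat{CFA}$ of the complement of $K$ with the type-$D$ module $\CFD$ of a solid torus containing the $r$-component torus link $T(rm,rn)$. Since $K$ is an L-space knot, $\widehat{CFA}$ of its complement has a very restricted staircase form controlled entirely by $g(K)$, which reduces the computation to a finite combinatorial task depending only on $g(K)$, $m$, $n$, and $r$. The plan is then to verify a Liu/Gorsky--N\'emethi-type L-space link criterion---for instance, that the multivariate $H$-function satisfies $H(\vec{s})-H(\vec{s}+\vec{e_i})\in\{0,1\}$ for all $\vec{s}$ and every standard basis vector $\vec{e_i}$---directly from this combinatorial model.

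\emph{Main obstacle.} The decisive technical point is the boundary slope $n/m=2g(K)-1$, which is excluded in the knot case by Theorem~\ref{thm:cableknot} but allowed in the link statement when $r\geq 2$. At this slope the cable \emph{knot} $K_{m,n}$ carries an ``extra'' $\HFK$ generator destroying its L-space knot property; the heart of the argument is to show that in the $r$-component parallel $K_{rm,rn}$ the refined multivariate Alexander grading splits this extra generator across different sublink Spin$^c$-structures, so that no single $\vec{s}$-summand of $\HFL(K_{rm,rn})$ exceeds the L-space link threshold. I expect this delicate bookkeeping of Alexander gradings inside the bordered tensor product---together with pinning down exactly which staircase generators survive---to be the hardest technical step.
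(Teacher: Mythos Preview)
Your ``only if'' direction starts correctly and matches the paper for $m>1$. For $m=1$ the paper proceeds differently: instead of an $H$-function computation on the sublink $K_{2,2n}$, it uses Hanselman--Watson loop calculus in bordered Floer homology to show directly that no positive integral surgery on $K_{r,rn}$ is an L-space when $n<2g(K)-1$ (Proposition~\ref{prop:nonLspace}). Your alternative is plausible but incomplete as stated: the inversion formula that produces the $h$-function from Alexander polynomials presupposes the L-space property, so you would have to argue by contradiction and actually exhibit a violated constraint, which you have not done.

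Your ``if'' direction misses a much shorter argument and, more seriously, rests on a criterion that is not known to be sufficient. The paper does not compute $\HFL$ to establish the L-space property. Instead it observes (Proposition~\ref{surgery}) that $(mn,p_2,\dots,p_r)$-surgery on $K_{rm,rn}$ is the connected sum $S^3_{n/m}(K)\mathbin{\#}L(m,n)\mathbin{\#}L(p_2-mn,1)\mathbin{\#}\cdots\mathbin{\#}L(p_r-mn,1)$, which is an L-space whenever $n/m\ge 2g(K)-1$; Liu's criterion (Theorem~\ref{L space criterion}) then finishes by induction on $r$. By contrast, the condition $H(\vec s)-H(\vec s+e_i)\in\{0,1\}$ you propose to verify is only a \emph{necessary} consequence of being an L-space link, so even a successful bordered computation of $\HFL$ would not close the argument.

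Finally, your ``main obstacle'' is misidentified. Since $\gcd(m,n)=1$, the equality $n/m=2g(K)-1$ forces $m=1$, and then $K_{m,n}=K$ is itself an L-space knot; there is no ``extra $\HFKhat$ generator'' to worry about. The surgery argument above treats this boundary slope uniformly because $S^3_{2g(K)-1}(K)$ is already an L-space. The genuine delicacy at this slope appears only later in the paper, in the explicit computation of $\HFL$ (Theorem~\ref{homology special}), not in establishing the L-space property.
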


\noindent In \cite{OSlens}, Ozsv\'ath and Szab\'o show that if $K$ is an L-space knot, then $\HFKhat(K)$ is completely determined by $\Delta_K(t)$, the Alexander polynomial of $K$. Consequently, the Alexander polynomials of L-space knots are quite constrained (the non-zero coefficients are all $\pm 1$ and alternate in sign) and the rank of $\HFKhat(K)$ is at most one in each Alexander grading. In \cite[Theorem 1.15]{Liu}, Liu generalizes this result to give bounds on the rank of $\HFL(L)$ in each Alexander multi-grading and on the coefficients of the multi-variable Alexander polynomial of an L-space link $L$ in terms of the number of components of $L$. For L-space cable links, we have the following stronger result.

\begin{definition}
Define the $\Z$-valued functions $\hh(k)$ and $\beta(k)$ by the equations:
\begin{equation}
\label{def hh}
\sum_{k}\hh(k)t^k=\frac{t^{-1}\Delta_{m,n}(t)(t^{mnr/2}-t^{-mnr/2})}{(1-t^{-1})^2(t^{mn/2}-t^{-mn/2})},\qquad \beta(k)=\hh(k-1)-\hh(k)-1,
\end{equation}
where $\Delta_{m,n}(t)$ is the Alexander polynomial of the cable knot $K_{m,n}$.
\end{definition}

Throughout, we work with $\F=\Z/2\Z$ coefficients. The following theorem gives a complete description of the homology groups $\HFLhat$ for cable links with $n/m>2g(K)-1$.

\begin{Theorem}
\label{hat homology}
Let $K_{rm,rn}$ be a cable link with $n/m>2g(K)-1$.
\begin{enumerate}[label=(\alph*)]
\item If $\beta(k)+\beta(k+1)\le r-2$ then: 
$$
\HFLhat(K_{rm,rn},k,\ldots,k)\simeq \bigoplus_{i=0}^{\beta(k)}\binom{r-1}{i}\F_{-2\hh(k)-i}\oplus
\bigoplus_{i=0}^{\beta(k+1)}\binom{r-1}{i}\F_{-2\hh(k)+2-r+i} 
$$
\item If $\beta(k)+\beta(k+1)\ge r-2$ then:
$$
\HFLhat(K_{rm,rn},k,\ldots,k)\simeq \bigoplus_{i=0}^{r-2-\beta(k+1)}\binom{r-1}{i}\F_{-2\hh(k)-i}\oplus
\bigoplus_{i=0}^{r-2-\beta(k)}\binom{r-1}{i}\F_{-2\hh(k)+2-r+i}
$$
\item If $v$ has $j$ coordinates equal to $k-1$ and $r-j$ coordinates equal to $k$ for some $k$ and $1\le j\le r-1$, then:
$$\HFLhat(K_{rm,rn},(k-1)^{j},k^{r-j})\simeq \binom{r-2}{\beta(k)}\F_{-2\hh(k)-\beta(k)-j}.$$
\item For all other Alexander gradings the groups $\HFLhat$  vanish.
\end{enumerate}
\end{Theorem}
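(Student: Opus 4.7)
Since Theorem \ref{thm:cablelink} gives $K_{rm,rn}$ as an L-space link under the hypothesis $n/m>2g(K)-1$, the plan is to use the fact that the link Floer chain complex of an L-space link is determined by its integer-valued $h$-function $H\colon\Z^r\to\Z_{\ge 0}$, and then to read $\HFLhat$ off of a Koszul-type hypercube. The first step is to identify $H$ on the lattice points relevant to the statement. Because $K_{rm,rn}$ is invariant under the permutation action on its components, $H(\mathbf{s})$ depends only on the multiset of entries of $\mathbf{s}$. Comparing Euler characteristics (i.e.\ restricting the multivariable Alexander polynomial to the diagonal $t_1=\cdots=t_r=t$) recovers precisely the generating function in (\ref{def hh}), so that $H(k,\ldots,k)=\hh(k)$. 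The remaining values of $H$ at the vertices $(k,\ldots,k)+\mathbf{e}$, for $\mathbf{e}\in\{0,1\}^r$, follow from the same symmetry together with inclusion--exclusion for sub-link $h$-functions, and are expressible in terms of $\hh(k),\hh(k+1)$, and $\beta(k)$.

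The second step is the standard identification for L-space links: $\HFLhat(L,\mathbf{s})$ is the homology of a hypercube whose vertex labelled by $S\subseteq\{1,\ldots,r\}$ is a copy of $\F$ placed in Maslov grading $-2H(\mathbf{s}+\mathbf{e}_S)-|S|$, with edges given by Koszul-type differentials. Specialized to the diagonal grading $\mathbf{s}=(k,\ldots,k)$, this hypercube collapses by symmetry to a one-parameter complex indexed by $j=|S|\in\{0,1,\ldots,r\}$, with $\binom{r}{j}$ copies at level $j$. I would compute its homology directly by tracking how the staircase pattern of the values $H((k,\ldots,k)+\mathbf{e}_S)$ controls the ranks of the Koszul differentials. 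The binomial coefficients $\binom{r-1}{i}$ in statements (a) and (b) arise as kernel and cokernel dimensions of these differentials, while the dichotomy between (a) and (b) reflects whether the ``bottom'' of the staircase (controlled by $\beta(k)$) is separated from the ``top'' (controlled by $\beta(k+1)$) by at least one Maslov grading, i.e.\ whether $\beta(k)+\beta(k+1)\le r-2$.

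For the near-diagonal gradings in (c), where $j$ of the coordinates equal $k-1$ and $r-j$ equal $k$, the same construction restricts to a sub-hypercube on which only the coordinates equal to $k-1$ can be incremented. An analogous kernel/cokernel computation yields a single Maslov grading with multiplicity $\binom{r-2}{\beta(k)}$. For any remaining Alexander multi-grading --- one containing coordinate values differing by more than one, or containing three or more distinct values --- the corresponding hypercube contains a $2$-dimensional face along which $H$ is constant, making the associated square acyclic and thereby killing the total complex; this gives (d).

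The main technical obstacle is the hypercube computation itself: one must know $H$ at all $2^r$ vertices near the relevant lattice point (not just on the diagonal), track Maslov gradings carefully through the Koszul differentials, and isolate the precise threshold $\beta(k)+\beta(k+1)=r-2$ at which the ``bottom'' and ``top'' staircase pieces begin to interact. Once this combinatorial analysis is carried out, the binomial coefficient formulas in (a)--(c) and the vanishing in (d) follow uniformly.
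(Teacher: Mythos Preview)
Your proposal has a genuine gap at its foundation. The ``standard identification'' you invoke---that $\HFLhat(L,\mathbf{s})$ is the homology of a single $r$-cube with a copy of $\F$ at each vertex $S$ in grading $-2H(\mathbf{s}+\mathbf{e}_S)-|S|$---is not correct. Already for the Hopf link at $\mathbf{s}=(\tfrac12,\tfrac12)$ one has $H\equiv 0$ on the entire upward cube, so any Koszul-type complex on those four copies of $\F$ is acyclic, whereas $\HFLhat(T(2,2),\tfrac12,\tfrac12)\simeq\F_{(0)}$. The correct construction is two-stage: first a cube with $\F[U]$ at each vertex computing $\HFL$ (Theorem~\ref{spectral}), and then a second cube whose vertices carry $\HFL(L,v+e_B)$ and whose edge maps are the $U_i$-actions (Proposition~\ref{spectral for hat}). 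Both steps are genuine spectral sequences, not just $E_2$-computations, and the heart of the proof is showing that the higher differentials vanish.

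This is where your treatment of part (b) breaks down. In case (a) the paper shows (Theorem~\ref{diagonal degeneration}) that $\widehat{E_2}$ is concentrated in cube degrees $0$ and $r-\beta(k+1)$, and the only possible higher differential $\widehat{\partial}_{r-\beta(k+1)}$ vanishes by a Maslov-grading count---but that count uses precisely the inequality $\beta(k)+\beta(k+1)<r-1$. When $\beta(k)+\beta(k+1)\ge r-2$ this grading argument no longer rules out higher differentials, so a direct hypercube computation does not obviously collapse. The paper sidesteps this entirely: it proves the symmetry $\beta(1-k)+\beta(k)=r-2$ (Lemma~\ref{symmetry for beta}), uses the conjugation symmetry $\HFLhat_\bullet(k,\ldots,k)\simeq\HFLhat_{\bullet-2kr}(-k,\ldots,-k)$, and thereby reduces case (b) to case (a) applied at $-k$. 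Your outline gives no mechanism to control the higher differentials in case (b), and without the symmetry trick there is no evident substitute.
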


\noindent We prove the parts of this theorem as separate Theorems \ref{diagonal degeneration}, \ref{diagonal dual} and \ref{off diagonal}. We compute $\HFLhat$ explicitly for several examples in Section \ref{sec:examples}. In particular, we use Theorem \ref{hat homology} to confirm a conjecture of Joan Licata \cite[Conjecture 1]{Licata} concerning $\HFLhat$ for $(n,n)$ torus links.

\begin{Theorem}
\label{th: n n hat}
Suppose that $0\le s\le \frac{n-1}{2}$. Then 
$$
\HFLhat\left(T(n,n),\frac{n-1}{2}-s,\ldots,\frac{n-1}{2}-s\right)=\bigoplus_{i=0}^{s} \binom{n-1}{i}\F_{(-s^2-s-i)}\oplus \bigoplus_{i=0}^{s-1} \binom{n-1}{i}\F_{(-s^2-s-n+2+i)}.
$$
\end{Theorem}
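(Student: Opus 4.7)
The plan is to specialize Theorem \ref{hat homology} with $K$ the unknot and cable parameters both equal to $1$. Since the $(1,1)$-cable of the unknot is again the unknot, $K_{rm,\,rn}$ with $m=n=1$ is just $r$ parallel longitudinal copies on the boundary of a tubular neighborhood of the unknot, i.e.\ the $(r,r)$-torus link; setting the component count $r$ equal to the torus parameter $n$ from the statement gives $T(n,n)$. Since $g(K) = 0$, the hypothesis $n/m > 2g(K) - 1$ reduces to $1 > -1$, so Theorem \ref{hat homology} applies.

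The core of the argument is an explicit computation of $\hh$ and $\beta$. With $\Delta_{1,1}(t) = 1$, the defining identity (\ref{def hh}) becomes
\[
\sum_k \hh(k)\, t^k \;=\; \frac{t^{-1}\bigl(t^{n/2}-t^{-n/2}\bigr)}{(1-t^{-1})^2\bigl(t^{1/2}-t^{-1/2}\bigr)}.
\]
I would rewrite the right-hand side as the product
\[
\Biggl(\sum_{j\ge 1} j\, t^{-j}\Biggr)\cdot\Bigl(t^{-(n-1)/2} + t^{-(n-3)/2} + \cdots + t^{(n-1)/2}\Bigr),
\]
using $t^{-1}/(1-t^{-1})^2 = \sum_{j\ge 1} j\, t^{-j}$ and the geometric summation $(t^{n/2}-t^{-n/2})/(t^{1/2}-t^{-1/2}) = t^{-(n-1)/2}+\cdots+t^{(n-1)/2}$. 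Extracting the coefficient of $t^k$ at $k = \tfrac{n-1}{2}-s$ and reindexing gives $\hh\!\left(\tfrac{n-1}{2}-s\right) = \tfrac{s(s+1)}{2}$ for $0 \le s \le \tfrac{n-1}{2}$, with $\hh(k) = 0$ when $k > (n-1)/2$. From $\beta(k) = \hh(k-1) - \hh(k) - 1$ I then read off $\beta\!\left(\tfrac{n-1}{2}-s\right) = s$ and $\beta\!\left(\tfrac{n+1}{2}-s\right) = s-1$ in the relevant range, with the boundary case $s = 0$ producing $\beta(k+1) = -1$.

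To finish, I verify which branch of Theorem \ref{hat homology} is active on the diagonal: $\beta(k) + \beta(k+1) = 2s - 1 \le n - 2 = r - 2$ on the entire prescribed range $0 \le s \le \tfrac{n-1}{2}$, so case (a) applies throughout. Substituting $r = n$, $\hh(k) = s(s+1)/2$, $\beta(k) = s$, and $\beta(k+1) = s-1$ into the formula of case (a) reproduces the two summands in the claim verbatim, noting $-2\hh(k) = -s^2-s$ and $2 - r = 2 - n$. There is no substantive obstacle beyond this generating-function manipulation; the only point of care is the boundary $s=0$, where $\beta(k+1) = -1$ correctly forces the second summand to be empty, matching the empty sum $\bigoplus_{i=0}^{-1}$ in the theorem statement.
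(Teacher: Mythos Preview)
Your proof is correct and follows essentially the same route as the paper: compute $\beta(k)=s$ and $\beta(k+1)=s-1$, verify that $\beta(k)+\beta(k+1)=2s-1\le n-2$ so that case (a) of Theorem~\ref{hat homology} (equivalently Theorem~\ref{diagonal degeneration}) applies, and substitute. The only cosmetic difference is that you compute $\hh$ directly from the generating-function definition~\eqref{def hh}, whereas the paper extracts $\beta$ from the formula~\eqref{h for link} established in the proof of Theorem~\ref{th: n n minus}; both routes give $\hh\bigl(\tfrac{n-1}{2}-s\bigr)=\tfrac{s(s+1)}{2}$ and the rest is identical.
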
 

\noindent Combined with \cite[Theorem 2]{Licata}, this completes the description of $\HFLhat(T(n,n))$.

The following theorem describes the homology groups $\HFL$ for cable links with $n/m>2g(K)-1$.

\begin{Theorem}
\label{homology}
Let $K$ be an L-space knot and $n/m>2g(K)-1$. Consider an Alexander grading $v=(v_1,\ldots,v_n)$. Suppose that among the coordinates $v_i$ exactly $\lambda$ are equal to $k$ and all other coordinates are less than $k$. Let $|v|=v_1+\ldots+v_n$. Then the Heegaard-Floer homology group $\HFL(K_{rm,rn},v)$ can be described as follows:
\begin{enumerate}[label=(\alph*)]
\item \label{it:HFL0} If $\beta(k)<r-\lambda$ then $\HFL(K_{rm,rn},v)=0$.
\item \label{it:HFLF} If $\beta(k)\ge r-\lambda$ then
$$\HFL(K_{rm,rn},v)\simeq (\F_{(0)}\oplus \F_{(-1)})^{r-\lambda}\otimes \bigoplus_{i=0}^{\beta(k)-r+\lambda} \binom{\lambda-1}{i}\F_{(-2h(v)-i)},$$
\end{enumerate}
where
$h(v)=\hh(k)+kr-|v|$.
\end{Theorem}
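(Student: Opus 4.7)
My plan is to deduce Theorem \ref{homology} from Theorem \ref{hat homology} by lifting the hat computation to the minus version via the $H$-function formalism that the paper develops for L-space links in its earlier sections. The key external input is the result, following Liu \cite{Liu}, that for an L-space link with $r$ components the infinity complex $CFL^\infty$ is completely determined by an integer-valued $H$-function $h:\Z^r\to\Z$, and the Maslov grading of the natural ``top'' generator of $\HFL(L,v)$ is $-2h(v)$.

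First, I would verify the explicit formula $h(v)=\hh(k)+kr-|v|$ for $v$ in the stated range. The diagonal value $h(k,\ldots,k)=\hh(k)$ is forced by matching Euler characteristics against the generating-function definition in equation \eqref{def hh}. The affine-linear extension then follows because the $H$-function of an L-space link decreases by exactly one per unit decrease in any single coordinate as long as one stays in the ``stable'' region where all coordinates are at most $k$ and at least one equals $k$; this monotonicity is precisely the property that characterizes L-space behavior in the sense used in Theorem \ref{thm:cablelink}.

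Second, I would carry out the main computation by induction on $r-\lambda$, the number of coordinates of $v$ strictly below $k$. The base case $\lambda=r$ is the diagonal grading $v=(k,\ldots,k)$: the torus factor $(\F_{(0)}\oplus\F_{(-1)})^{r-\lambda}$ is then trivial, and $\HFL(L,(k,\ldots,k))$ should reproduce the first summand in Theorem \ref{hat homology}(a), while the second summand of the hat computation is created by setting $U_i=0$ and therefore contributes nothing to the minus version. For the inductive step, I would combine the $U_i$-action in the non-top coordinate directions with Theorem \ref{hat homology}(c), which pins down the near-diagonal $\HFLhat$. Each unit step in a non-top coordinate tensors the local homology with $H_*(S^1)=\F_{(0)}\oplus\F_{(-1)}$, accounting both for the $r-\lambda$ torus factors and for the uniform grading shift encoded in the formula for $h(v)$.

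The main obstacle will be the bookkeeping of absolute Maslov gradings and the justification of case \ref{it:HFL0}: the homology must vanish precisely when $\beta(k)<r-\lambda$. Combinatorially this is the statement that the truncation $i\le \beta(k)-r+\lambda$ becomes empty, but geometrically it reflects that the $\beta(k)$ ``excess'' generators at the corner are insufficient to survive the $r-\lambda$ non-trivial $U$-shifts required to reach $v$, so the resulting contributions cancel against the ``$U$-tail'' produced by Theorem \ref{hat homology}(a)-(b). Showing that these cancellations happen exactly, with no residual homology appearing in unexpected Maslov degrees, is the most delicate part of the argument.
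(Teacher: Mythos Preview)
Your proposed strategy is circular. In the paper's logical structure, Theorem \ref{hat homology} is \emph{deduced from} Theorem \ref{homology}, not the other way around: its three parts are proved as Theorems \ref{diagonal degeneration}, \ref{diagonal dual}, and \ref{off diagonal}, each of which starts from Proposition \ref{spectral for hat} with $\widehat{E}_1=\bigoplus_B \HFL(L,v+e_B)$ and then plugs in the description of $\HFL$ furnished by Theorem \ref{homology}. So you cannot invoke Theorem \ref{hat homology} as input here.

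Independently of the circularity, several of your intermediate claims are not justified. The identity $h(v)=\hh(k)+kr-|v|$ is the content of Lemma \ref{lem:h for nonzero}; it is \emph{not} a general feature of L-space links but a consequence of the explicit additive formula \eqref{h for link} for $h_{rm,rn}$ together with Lemma \ref{HFL after l}, and it only holds under the hypothesis $\beta(k)\ge r-\lambda$. Your appeal to ``the property that characterizes L-space behavior in the sense used in Theorem \ref{thm:cablelink}'' is a non sequitur: that theorem concerns which cables are L-space links, not the shape of the $h$-function. Likewise, the assertion that a unit step in a non-top coordinate ``tensors the local homology with $\F_{(0)}\oplus\F_{(-1)}$'' is exactly what must be proved, and it fails when $\beta(k)<r-\lambda$; you acknowledge that the vanishing in case \ref{it:HFL0} is ``the most delicate part'' but give no mechanism for it.

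The paper's actual argument bypasses all of this by working directly with the spectral sequence of Theorem \ref{spectral}. The $h$-function computations in Lemmas \ref{lem:zero} and \ref{lem:nonzero} identify the $E_1$ differential \eqref{def d1} with either a contractible cube (giving case \ref{it:HFL0} immediately) or a tensor product $\cE_{r-\lambda}[U]\otimes_{\F[U]}\cE_{\lambda}[U]$ carrying $U\partial$ on the first factor and the truncated differential $\partial^{(\beta+1)}$ on the second. Lemma \ref{homology d b} then computes $E_2$, and a bidegree count (all surviving classes lie on a single line of slope $-2$ in the $(|B|,\text{Maslov})$-plane) forces collapse at $E_2$. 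That is the missing idea in your outline.
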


\noindent We prove this theorem in Section \ref{sec:minus}.
The structure of the homology for $n/m=2g(K)-1$ (which is possible only if $m=1$) is more subtle and is described in Theorem \ref{homology special}.

Finally, we describe $\HFL$ as an $\F[U_1,\ldots,U_r]$--module. We define a collection of $\F[U_1,\ldots,U_r]$--modules $M_{\beta}$ for $0\le \beta\le r-2$, $M_{r-1,k}$ for $k\ge 0$ and $M_{r-1,\infty}$. These modules can be defined combinatorially and do not depend on a link.

\begin{Theorem}
\label{th:splitting} 
Let $R=\F[U_1,\ldots,U_r]$ and suppose that $n/m>2g(K)-1$. There exists a finite collection of diagonal lattice points $\aa_i=(a_i,\ldots,a_i)$ (determined by $m,n$ and the Alexander polynomial of $K$) such that $\HFL$ admits the following direct sum decomposition:
$$
\HFL(K_{rm,rn})=\bigoplus_i R\cdot \HFL(K_{rm,rn},\aa_i).
$$
Furthermore, for $\beta(a_i)\le r-2$ one has  $R\cdot \HFL(K_{rm,rn},\aa_i)\simeq M_{\beta(a_i)}$, 
and for $\beta(a_i)=r-1$ one has either $R\cdot \HFL(K_{rm,rn},\aa_i)\simeq M_{r-1,k}$ for some $k$
or $R\cdot \HFL(K_{rm,rn},\aa_i)\simeq M_{r-1,\infty}$.
\end{Theorem}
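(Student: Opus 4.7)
The plan is to bootstrap from Theorem~\ref{homology}, which already determines the graded vector space structure of $\HFL$ at every multi-grading, and to reconstruct the $R$-action by tracking how the operators $U_i$ (each lowering the $i$-th Alexander coordinate by $1$ and the Maslov grading by $2$) interact with that pointwise data.

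I would begin by listing the diagonal generators $\aa_i=(a_i,\ldots,a_i)$ as exactly those diagonal lattice points at which $\HFL$ is nonzero; by Theorem~\ref{homology}\ref{it:HFLF} this is equivalent to $\beta(a_i)\ge 0$. Enumerate them as $a_1>a_2>\cdots$. To prove that $\sum_i R\cdot\HFL(K_{rm,rn},\aa_i)$ exhausts $\HFL$, I would argue by downward induction on the number $r-\lambda$ of ``sub-diagonal'' coordinates of a lattice point $v$. For $v$ on the diagonal ($\lambda=r$) the statement is immediate, while for $\lambda<r$ the tensor factor $(\F_{(0)}\oplus\F_{(-1)})^{r-\lambda}$ appearing in Theorem~\ref{homology}\ref{it:HFLF} should be realized as the image of the $U_j$'s indexed by the sub-diagonal coordinates, applied to classes at lattice points with one extra coordinate raised to the top value $k$. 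A Poincar\'e series check using the Alexander polynomial formula~\eqref{def hh} for $\hh$ ensures that the dimension of the target matches the dimension of the image, so the $U_j$'s hit everything.

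To promote spanning to a direct sum decomposition and identify each summand with the prescribed combinatorial module, I would use Maslov gradings as the main bookkeeping device. For $\beta(a_i)\le r-2$, the contribution $R\cdot\HFL(\aa_i)$ has bounded support as a function of Alexander grading: once any coordinate drops too far below $a_i$, the Maslov grading forced by $U_i$-action would place the class in a region where Theorem~\ref{homology} predicts $\HFL$ to be zero, so the submodule must terminate. The resulting shape matches $M_{\beta(a_i)}$. For $\beta(a_i)=r-1$, the support can extend all the way down the diagonal to the next diagonal point $\aa_j$ with $\beta(a_j)=r-1$ (giving $M_{r-1,a_i-a_j}$) or indefinitely (giving $M_{r-1,\infty}$). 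Directness of the sum then follows by comparing pointwise Maslov-graded dimensions with those predicted by Theorem~\ref{homology}.

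The main obstacle is chain-level control of the $U_i$-actions. A vector-space dimension count alone is not enough: one must verify that each monomial $U_1^{c_1}\cdots U_r^{c_r}$ acts on a generator exactly as prescribed by $M_\beta$, and, crucially, remains nonzero as long as the target grading lies in the support predicted above. I expect this to require an explicit chain model such as the algebraic cable complex $\cCFKa$ together with the staircase description of $CFK^-$ for an L-space knot $K$, threading through the spectral sequence used earlier in the paper. Once this compatibility is established, identifying each $R\cdot\HFL(\aa_i)$ with $M_{\beta(a_i)}$, $M_{r-1,k}$, or $M_{r-1,\infty}$ becomes a mechanical matching of generators and relations.
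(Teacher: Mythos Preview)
Your outline---diagonals generate, identify each $R\cdot\HFL(\aa_i)$, then assemble---matches the paper's. The gap is precisely where you flag it: control of the $U_i$-action. You propose to handle this by passing to an explicit chain model such as $\cCFKa$ and a staircase for $K$, but this is unnecessary and would be considerably harder than what the paper actually does.

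The point you are missing is that the $U_i$-action is already computable at the level of the spectral sequence used to prove Theorem~\ref{homology}. On the $E_1$ page one has the explicit formula~\eqref{U on generators}, $U_iz(v)=U^{1-h(v-e_i)+h(v)}z(v-e_i)$, and since $E_2=E_\infty$ for these links, Proposition~\ref{U on HFL} gives $U_i$ on $\HFL$ combinatorially in terms of the $h$-function, with no chain-level work required. The paper packages this by introducing algebras $\cAr_r$ and ideals $\cI_\beta$ and proving (Theorem~\ref{same max}) that the slice $\cH(k)/\cH(k-1)$ is isomorphic as a graded $R$-module to $\cAr_r/\cI_{\beta(k)}$. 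From this algebraic description two facts fall out immediately: Corollary~\ref{surj} says $U_i$ is surjective whenever $\max(v)=\max(v-e_i)$, which gives generation by diagonal classes in one line; and Lemma~\ref{diff max} says that if $\max$ drops and $\HFL$ is nonzero then $\beta(k)=r-1$, which is exactly what confines $R\cdot\HFL(\aa_i)$ to a single slice when $\beta(a_i)\le r-2$. Your Maslov-grading bookkeeping is a proxy for this, but it does not by itself prove surjectivity or injectivity of any $U_i$; you would still need the formula above.

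One small correction: for $\beta(a_i)=r-1$, the length of the resulting $M_{r-1,k}$ is not the distance to the next diagonal point with $\beta=r-1$, but the length of the maximal interval $[a,b]\ni a_i$ on which $\beta\equiv r-1$; Lemma~\ref{diff max} forces $\beta(a-1)=r-2$, and the module extends one step into that slice.
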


\noindent We compute $\HFL$ explicitly for several examples in Section \ref{sec:examples}.



\section*{Acknowledgments}

We are grateful to Jonathan Hanselman, Matt Hedden, Yajing Liu, Joan Licata, and Andr\'as N\'emethi for useful discussions. 

\section{Dehn surgery and cable links}
In this section, we prove Theorem \ref{thm:cablelink}. We begin with a result about Dehn surgery on cable links (cf. \cite{Heil}).

\begin{proposition}
\label{surgery}
The manifold obtained by $(mn, p_2, \dots, p_r)$--surgery on the $r$-component link $K_{rm, rn}$ is homeomorphic to $S^3_{n/m}(K) \# L(m, n) \# L(p_2-mn, 1) \# \dots \# L(p_r-mn, 1)$.
\end{proposition}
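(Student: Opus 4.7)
The plan is to induct on the number of components $r$. For the base case $r=1$, the statement reduces to the classical identity $S^3_{mn}(K_{m,n})=S^3_{n/m}(K)\#L(m,n)$, which follows from the Seifert-fibered structure of the cable space together with the observation that $mn$ is precisely the slope of the framing on $\partial\nu K$ induced by the cabling torus (as in Gordon, or Heil).

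For the inductive step from $r-1$ to $r$, I would realize $K_{rm,rn}=C_1\cup\dots\cup C_r$ as $r$ parallel $(m,n)$-curves on the torus $T=\partial\nu K$, so that $\lk(C_i,C_j)=mn$ for $i\neq j$ and the framing on each $C_i$ induced by $T$ coincides with its $mn$-framing. The key step is a Kirby handle slide of $C_r$ over $C_1$: choose the framing push-off $C_1^{f}$ of $C_1$ to be a parallel copy on $T$ lying in the annular region $A\subset T$ cobounded by $C_1$ and $C_r$, take the band to be a strip in $A$, and set $C_r':=C_r-C_1^{f}$ with orientations arranged so that $[C_r']=0$ in $H_1(T)$.

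Next I would run the standard linking/framing bookkeeping under the slide. One obtains $\lk(C_r',C_j)=\lk(C_r,C_j)-\lk(C_1,C_j)=mn-mn=0$ for $2\le j\le r-1$, and $\lk(C_r',C_1)=\lk(C_r,C_1)-\lk(C_1^{f},C_1)=mn-mn=0$, while the new framing coefficient on $C_r'$ becomes $p_r+mn-2\,mn=p_r-mn$. Since $C_r'$ is a null-homologous simple closed curve inside the annulus $A$, it bounds a disk in $A\subset T$, and this disk is disjoint from the remaining components (which lie outside $A$). Pushing the disk slightly into $\nu K$ exhibits $C_r'$ as an unknot in $S^3$ that is split from $C_1\cup\dots\cup C_{r-1}$.

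After the slide, the framed link is the disjoint union of $C_1\cup\dots\cup C_{r-1}=K_{(r-1)m,(r-1)n}$ with coefficients $(mn,p_2,\dots,p_{r-1})$ and a split framed unknot of coefficient $p_r-mn$. Surgery on the latter contributes a connect summand $L(p_r-mn,1)$, and the inductive hypothesis applied to the former yields $S^3_{n/m}(K)\#L(m,n)\#L(p_2-mn,1)\#\dots\#L(p_{r-1}-mn,1)$, completing the induction. The main technical hurdle is fixing the orientation and sign conventions correctly in the handle slide, so that the band sum is genuinely null-homologous on $T$ and the framing really reduces to $p_r-mn$; once this bookkeeping is pinned down, the rest of the argument is routine Kirby calculus.
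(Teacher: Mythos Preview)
Your argument is correct, and it takes a genuinely different route from the paper's proof. The paper performs the $mn$--surgery on the first component $K^1_{m,n}$ directly, observes that this surgery produces a reducing sphere (the cabling annulus capped off by two meridian disks of the surgery torus), and then notes that the remaining components $K^2_{m,n},\dots,K^r_{m,n}$ all lie on this reducing sphere, where they bound disjoint disks; hence they become a split unlink in $S^3_{n/m}(K)\#L(m,n)$, and the framing shift from $p_i$ to $p_i-mn$ comes from comparing the torus framing before and after surgery. All $r-1$ extra components are thus dispatched simultaneously, with no induction.

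Your approach instead stays in $S^3$ and uses Kirby calculus: a handle slide of $C_r$ over $C_1$ (with the reversed orientation) carried out entirely on the cabling torus, producing a null-homotopic curve on $T$ that bounds a disk disjoint from the remaining components. This is perhaps more elementary in that it avoids any discussion of the Seifert-fibered structure of the cable space or identification of the reducing sphere, relying only on standard slide formulas and the fact that the torus framing equals $mn$. The cost is the induction and the bookkeeping you flag at the end; the paper's geometric picture is quicker once one has the reducing sphere in hand. Both arguments ultimately exploit the same underlying fact---that the $mn$--framing is the surface framing from $T$---but package it differently.
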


\begin{proof}
Recall (see, for example, \cite[Section 2.4]{HeddencablingII}) that $mn$-surgery on $K_{m,n}$ gives the manifold $S^3_{n/m}(K) \# L(m,n)$. Viewing $K_{m,n}$ as the image of $T_{m,n}$ on $\partial N(K)$, we have that the reducing sphere is given by the annulus $\partial N(K) \setminus N(T_{m,n})$ union two parallel copies of the meridional disk of the surgery solid torus; we obtain a sphere since the surgery slope coincides with the surface framing.

The link $K_{rm, rn}$ consists of $r$ parallel copies of $K_{m,n}$ on $\partial N(K)$. Label these $r$ copies $K^1_{m,n}$ through $K^r_{m,n}$. We perform $mn$-surgery on $K^1_{m,n}$ and consider the image $\widetilde{K}^i_{m,n}$ of $K^i_{m,n}$, $2 \leq i \leq r$, in $S^3_{n/m}(K) \# L(m,n)$. Each $\widetilde K^i_{m,n}$ lies on $\partial N(K) \setminus N(T_{m,n})$ and thus on the reducing sphere. In particular, each $\widetilde K^i_{m,n}$ bounds a disk $D^2_i$ in $S^3_{n/m}(K) \# L(m,n)$ such that the collection $\{D^2_2, \dots, D^2_r\}$ is disjoint. It follows that performing surgery on $\bigcup_{i=2}^r \widetilde K^i_{m,n} $ yields $r-1$ lens space summands. To see which lens spaces we obtain, note that the $mn$-framed longitude on $K^i_{m,n} \subset S^3$ coincides with the $0$-framed longitude on $\widetilde K^i_{m,n} \subset S^3_{n/m}(K) \# L(m,n)$. Thus, $p_i$-surgery on $K^i_{m,n}$ corresponds to $(p_i-mn)$-surgery on $\widetilde K^i_{m,n}$, and the result follows.
\end{proof}

Let us recall that the linking number between each two components of $K_{rm, rn}$ equals $l:=mn$. It is well-known that the cardinality of $H_1$ of the
 manifold obtained by $(p_1, p_2, \dots, p_r)$-surgery on $K_{rm, rn}$ equals $|\det \Lambda(p_1,\ldots,p_r)|$, where
 $$
 \Lambda_{ij}=\begin{cases}
p_i,& \text{if}\ i=j,\\
l, & \text{if}\ i\neq j.
 \end{cases}
$$ 
This cardinality can be computed using the following result.

\begin{proposition}
One has the following identity:
\begin{equation}
\label{det lambda}
\det \Lambda(p_1,\ldots,p_r)=(p_1-l)\cdots(p_r-l)+l\sum_{i=1}^{r}(p_1-l)\cdots\widehat{(p_i-l)}\cdots (p_r-l).
\end{equation}
\end{proposition}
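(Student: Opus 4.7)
The plan is to recognize that $\Lambda(p_1,\ldots,p_r) = D + l\,\mathbf{1}\mathbf{1}^T$, where $D=\mathrm{diag}(p_1-l,\ldots,p_r-l)$ and $\mathbf{1}\in\R^r$ is the all-ones column vector. This rewriting is immediate from the definition: the diagonal entries become $(p_i-l)+l = p_i$ and the off-diagonal entries become $0+l = l$. Since the second summand has rank one, the matrix determinant lemma applies directly.

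Specifically, I would invoke $\det(A+uv^T)=\det(A)\bigl(1+v^T A^{-1}u\bigr)$ with $A=D$, $u=l\,\mathbf{1}$, $v=\mathbf{1}$. This immediately yields
$$\det\Lambda \;=\; \prod_{i=1}^{r}(p_i-l)\left(1+l\sum_{i=1}^{r}\frac{1}{p_i-l}\right) \;=\; \prod_{i=1}^{r}(p_i-l) + l\sum_{i=1}^{r}\prod_{j\neq i}(p_j-l),$$
which is precisely the claimed identity \eqref{det lambda}. The calculation momentarily assumed $p_i\neq l$ so that $D$ is invertible, but both sides are polynomials in $p_1,\ldots,p_r$, so the identity extends to all values by polynomial continuation; alternatively, one can use the adjugate form of the lemma, which requires no invertibility hypothesis.

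As a sanity check and a self-contained alternative, I would also verify the formula by multilinearity of the determinant in the rows. Write row $i$ of $\Lambda$ as $l\,\mathbf{1}^T + (p_i-l)\,e_i^T$ and expand: among the resulting $2^r$ determinants, every term in which two or more rows are replaced by $l\,\mathbf{1}^T$ has a repeated row and vanishes. What remains is the single term in which every row contributes $(p_i-l)\,e_i^T$, giving $\prod_i(p_i-l)$, together with the $r$ terms in which exactly one row $i$ contributes $l\,\mathbf{1}^T$; the latter is $l$ times the determinant of a matrix that is diagonal on the complement of row $i$ with a single all-ones row inserted, and cofactor expansion along that row gives $l\prod_{j\neq i}(p_j-l)$. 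Summing reproduces \eqref{det lambda}.

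I do not expect any real obstacle: the statement is a standard linear-algebra identity about $D+l\,\mathbf{1}\mathbf{1}^T$, and the cable-link context enters only through the interpretation of $l=mn$ as the pairwise linking number. The only bookkeeping subtleties are tracking signs in the cofactor expansion and handling the non-invertible case, both of which are handled as above.
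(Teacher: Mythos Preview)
Your proof is correct and complete; both the matrix determinant lemma argument and the multilinearity expansion are valid, and you have handled the non-invertible case appropriately.

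Your route differs from the paper's. The paper argues by induction on $r$: it first checks directly that $\det\Lambda(l,p_2,\ldots,p_r)=l(p_2-l)\cdots(p_r-l)$, then expands along the first row to obtain the recursion $\det\Lambda(p_1,\ldots,p_r)=l(p_2-l)\cdots(p_r-l)+(p_1-l)\det\Lambda(p_2,\ldots,p_r)$, and closes the induction. Your approach is more structural: you identify $\Lambda$ as a rank-one perturbation $D+l\,\mathbf{1}\mathbf{1}^T$ of a diagonal matrix and apply the matrix determinant lemma in one stroke, with your multilinearity argument serving as a self-contained alternative that avoids quoting that lemma. The paper's proof is marginally more elementary in that it uses nothing beyond cofactor expansion, whereas yours is quicker and makes the reason for the formula transparent (only the terms with at most one ``all-$l$'' row survive). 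Either is perfectly adequate for this auxiliary identity.
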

\begin{proof}
One can easily check that $\det \Lambda(l,p_2,\ldots,p_r)=l(p_2-l)\cdots(p_r-l).$
The expansion of the determinant in the first row yields a recursion relation
$$
\det \Lambda(p_1,\ldots,p_r)=\det \Lambda(l,p_2\ldots,p_r)+(p_1-l)\det \Lambda(p_2,\ldots,p_r)=
$$
$$
=l(p_2-l)\cdots(p_r-l)+(p_1-l)\det \Lambda(p_2,\ldots,p_r).
$$
Now \eqref{det lambda} follows by induction in $r$.
\end{proof}
\begin{corollary}
\label{positive definite}
If $p_i\ge l$ for all $i$ then $\det \Lambda(p_1,\ldots,p_r)\ge 0$. 
\end{corollary}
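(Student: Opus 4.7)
The plan is to derive the inequality directly from the explicit determinantal formula of the preceding proposition. I will substitute the hypothesis $p_i \ge l$ into equation \eqref{det lambda} and check that every term on the right-hand side is manifestly non-negative.

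More precisely, the first step is to observe that under the hypothesis each factor $p_i - l$ is non-negative. Consequently the leading product $(p_1 - l)\cdots(p_r - l)$ is a product of non-negative real numbers and is thus non-negative, and for each $i$ the omitted product $(p_1-l)\cdots\widehat{(p_i-l)}\cdots(p_r-l)$ appearing in the sum is likewise a product of non-negative reals, hence non-negative. The second step is to note that the coefficient $l = mn$ in front of the sum is itself non-negative under the conventions of the paper (with $m>0$ by assumption and $n>0$ in the relevant range $n/m \ge 2g(K)-1$ for cable parameters). Adding these non-negative contributions via \eqref{det lambda} yields $\det \Lambda(p_1,\ldots,p_r) \ge 0$.

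I do not anticipate any genuine obstacle; the corollary is an immediate computational consequence of the identity established in the previous proposition. The only point worth flagging is that the argument relies on $l \ge 0$, so that the term $l \sum_{i}(\cdots)$ reinforces rather than cancels the leading product $(p_1-l)\cdots(p_r-l)$; without this positivity, one can easily produce counterexamples (for instance, $r=2$ with $p_1 = p_2 = l+1$ and $l<0$ gives $\det\Lambda = 1 + 2l < 0$).
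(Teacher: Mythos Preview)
Your proposal is correct and is exactly the intended argument: the paper states the corollary immediately after formula~\eqref{det lambda} without writing a separate proof, since the inequality follows at once from observing that every term on the right-hand side of~\eqref{det lambda} is non-negative when $p_i\ge l$ and $l\ge 0$. Your flag that the non-negativity of $l=mn$ is needed is accurate and worth noting; in the paper's setting this is guaranteed by the standing convention $m>0$ together with $n/m\ge 2g(K)-1\ge 0$ (the only relevant applications have $n>0$).
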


In order to prove Theorem \ref{thm:cablelink}, we will need the following:
\begin{theorem}[{\cite[Proposition 1.11]{Liu}}]
\label{L space criterion}
A link $L$ is an $L$--space link if and only if there exists a surgery framing $\Lambda(p_1,\ldots,p_r)$, such that for all sublinks $L'\subseteq L$, 
$\det(\Lambda(p_1,\ldots, p_r)|_{L'}) >0$ and $S^3_{\Lambda|_{L'}}(L')$ is an $L$--space.
\end{theorem}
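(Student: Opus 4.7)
The plan is to prove both directions using the Ozsv\'ath--Szab\'o surgery exact triangle for $\HFhat$, combined with the fact that every sublink of an L-space link is itself an L-space link. The latter can be deduced by repeatedly incrementing a single framing coefficient on $L\setminus L'$ and using the triangle to transfer the L-space property down to surgery on $L'$.

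For the forward direction, assume $L$ is an L-space link. By the sublink observation, every $L'\subseteq L$ is also an L-space link, so for $p_1,\dots,p_r$ simultaneously large enough, every restricted surgery $S^3_{\Lambda|_{L'}}(L')$ is an L-space; taking $p_i\ge l$ in addition gives strict positivity of each $\det(\Lambda|_{L'})$ by Corollary~\ref{positive definite}. This exhibits the framing demanded by the theorem.

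For the reverse direction, suppose $\Lambda=\Lambda(p_1,\dots,p_r)$ satisfies the hypothesis. I would show by induction on $N=\sum_i(p_i'-p_i)$ that every framing $\Lambda'$ with $p_i'\ge p_i$ again satisfies the hypothesis and yields an L-space surgery on $L$; letting $N\to\infty$ then proves $L$ is an L-space link. For the inductive step, increment a single coordinate $p_i$ to $p_i+1$; the surgery exact triangle for $\HFhat$ along the $i$-th component reads
$$
\HFhat(S^3_{\Lambda}(L)) \longrightarrow \HFhat(S^3_{\Lambda+e_i}(L)) \longrightarrow \HFhat\bigl(S^3_{\Lambda|_{L\setminus K_i}}(L\setminus K_i)\bigr) \longrightarrow ,
$$
so rank subadditivity combined with the inductive hypothesis gives
$$
\rk\HFhat(S^3_{\Lambda+e_i}(L)) \,\le\, |\det\Lambda|+|\det(\Lambda|_{L\setminus K_i})|.
$$
Expanding $\det(\Lambda+e_i)$ along its $i$-th row (only the $(i,i)$ entry changed, by $+1$) yields the identity $\det(\Lambda+e_i)=\det\Lambda+\det(\Lambda|_{L\setminus K_i})$; by strict positivity of both summands the right-hand side above equals $|\det(\Lambda+e_i)|=|H_1(S^3_{\Lambda+e_i}(L))|$, which matches the L-space lower bound and forces $S^3_{\Lambda+e_i}(L)$ to be an L-space. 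The same argument applied in parallel to every sublink containing $K_i$ shows the hypotheses persist for $\Lambda+e_i$, and sublinks not containing $K_i$ are unaffected.

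The main obstacle is the bookkeeping of the induction: at each increment the hypothesis must be re-verified for \emph{every} sublink, and the strict positivity of each $\det(\Lambda|_{L'})$ is essential both to keep the determinant identity producing the desired equality and to prevent a degenerate saturation of the rank inequality. A secondary point to check is that the surgery exact triangle, applied one component at a time, correctly identifies the third term with $\HFhat$ of the sublink surgery with the induced framing; this is the usual interpretation of an $\infty$-slope on a single component and is standard.
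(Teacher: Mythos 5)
This statement is quoted from Liu (\cite[Proposition 1.11]{Liu}) and the paper gives no proof of it, so there is nothing internal to compare against; judging your argument on its own terms, the substantive direction is essentially right. Your induction for the ``if'' direction --- incrementing one framing coefficient at a time, using the triad $(\Lambda,\Lambda+e_i,\infty)$ exact triangle for $\HFhat$, the row-expansion identity $\det(\Lambda+e_i)=\det\Lambda+\det(\Lambda|_{L\setminus K_i})$, and the strict positivity of all restricted determinants to turn rank subadditivity into the forced equality $\rk\HFhat=|H_1|$ --- is exactly the mechanism behind \cite[Lemma 2.5]{Liu}, which this paper invokes separately in the proof of Theorem \ref{thm:cablelink}. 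Your remark that the hypothesis must be propagated to every sublink containing $K_i$ at each step is the right bookkeeping.

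The gap is in the ``only if'' direction, specifically in your parenthetical justification that every sublink of an L-space link is an L-space link ``by repeatedly incrementing a single framing coefficient on $L\setminus L'$ and using the triangle to transfer the L-space property down to surgery on $L'$.'' The exact triangle only gives $\rk\HFhat(Y_\infty)\le\rk\HFhat(Y_p)+\rk\HFhat(Y_{p+1})$, and when $Y_p$ and $Y_{p+1}$ are L-spaces this bound is $d_p+d_{p+1}$, which is much larger than $|H_1(Y_\infty)|=d_{p+1}-d_p$ and grows with $p$; one cannot force the $\infty$-filling to be an L-space this way (a count of ranks in the exact sequence shows you would need the cobordism map $\HFhat(Y_p)\to\HFhat(Y_{p+1})$ to be injective, which the triangle does not provide). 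The correct route to the sublink statement is the large-surgery/stabilization identification $A^{-}(L;v)\sim A^{-}(L-L_r;\pi_r(v))$ for $v_r\gg 0$ (equation \eqref{projection for a-} in this paper, from \cite[Proposition 7.1]{OSlinks}), together with the Manolescu--Ozsv\'ath large surgery theorem; this is \cite[Lemma 1.10]{Liu}, which the paper also uses. With that fact cited or proved properly, the rest of your forward direction (choose all $p_i$ simultaneously large and $\ge l$, apply Corollary \ref{positive definite}) is fine.
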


\noindent We will also need the following proposition, which we prove in Subsection \ref{subsec:bordered} below.

\begin{proposition}\label{prop:nonLspace}
Let $K$ be an L-space knot and $p_i > 0$, $i=1, \dots, r$. If $n < 2g(K)-1$, then the manifold obtained by $(p_1, \dots, p_r)$-surgery on the $r$-component link $K_{r, rn}$ is not an L-space.
\end{proposition}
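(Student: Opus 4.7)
The plan is to use bordered Heegaard Floer homology. Write $T := \partial N(K)$ for the cabling torus; then the link exterior decomposes as
\[
E_{K_{r,rn}} \;=\; E_K \;\cup_T\; V_r,
\]
where the \emph{cabling space} $V_r$ is the exterior, inside a standard solid torus, of $r$ parallel $(1,n)$-curves pushed off the boundary. Performing $(p_1,\ldots,p_r)$-surgery on $K_{r,rn}$ (each $p_i$ taken with respect to the $S^3$ Seifert framing) amounts to Dehn filling the $r$ inner torus boundaries of $V_r$ with slopes $p_i$. This yields a manifold $\hat V_r = \hat V_r(p_1,\ldots,p_r)$ with a single torus boundary naturally identified with $T$, so that
\[
S^3_{(p_1,\ldots,p_r)}(K_{r,rn}) \;=\; E_K \;\cup_T\; \hat V_r.
\]
By the Lipshitz--Ozsv\'ath--Thurston pairing theorem,
\[
\HFhat\bigl(S^3_{(p_1,\ldots,p_r)}(K_{r,rn})\bigr) \;\cong\; \widehat{CFA}(E_K) \;\boxtimes\; \CFD(\hat V_r).
\]

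The first main task is to compute $\CFD(\hat V_r)$ with respect to the canonical $(\mu,\lambda)$-parametrization of $T$. The space $V_r$ is Seifert fibered over $\Sigma_{0,r+1}$, and its type-$D$ invariant, together with the effect of Dehn filling, can be computed explicitly by iterating Dehn filling along torus boundaries (each filling corresponds to $\boxtimes$-ing with the type-$A$ module of a solid torus of the chosen slope). I would first treat $r=1$ as a warm-up, where $\hat V_1(p_1)$ is the complement of the surgery core and its bordered invariant essentially repackages the standard bordered cabling formula. For $r\ge 2$ the computation is then carried out inductively; the key structural output is an ``unstable chain'' in $\CFD(\hat V_r)$ whose length is governed by the cable slope $n$, reflecting the fact that each of the $r$ components is a $(1,n)$-curve on $T$.

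The second task is to exploit the pairing. By Lipshitz--Ozsv\'ath--Thurston, $\widehat{CFA}(E_K)$ for an L-space knot $K$ of genus $g(K)$ is determined by $\Delta_K(t)$ and, in the relevant framing, contains a distinguished horizontal chain of length $2g(K)-1$. When the unstable chain of slope $n$ from $\CFD(\hat V_r)$ is paired against this horizontal chain, the assumption $n < 2g(K)-1$ forces at least one extra $\F$-summand in the resulting $\HFhat$, beyond the minimum rank $|H_1|$, by the same mechanism that prevents $S^3_s(K)$ from being an L-space when $s<2g(K)-1$. Consequently $S^3_{(p_1,\ldots,p_r)}(K_{r,rn})$ cannot be an L-space.

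The main obstacle is the first task: explicitly computing $\CFD(\hat V_r(p_1,\ldots,p_r))$ and locating the unstable chain of slope $n$ inside it. Even the $r=1$ case already requires the bordered cabling formula, and for $r\ge 2$ one must verify that iteratively filling the inner tori with arbitrary positive slopes $p_2,\ldots,p_r$ preserves the slope-$n$ unstable chain rather than cancelling it off. Once that computation is in hand, the concluding pairing argument simply mirrors the classical slope bound for L-space knots and involves no genuinely new input.
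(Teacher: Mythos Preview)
Your decomposition $E_K \cup_T \hat V_r$ and the decision to use bordered Floer homology are exactly what the paper does. The difference is in execution: rather than attempting an explicit pairing computation with $\widehat{CFA}(E_K)$, the paper works entirely in the Hanselman--Watson loop calculus and uses their L-space gluing theorem. Concretely, the paper (i) computes $\CFD(\hat V_r)$ in standard loop notation via the plumbing-tree merge operations, obtaining a loop built from $d_{k_i}$ pieces with $k_i\le 0$, at least one $k_i=0$, and exactly one $k_i=-r$; (ii) shows from this description that the slope $1$ is \emph{not} a strict L-space slope on $\hat V_r$; (iii) observes that for the $n$-framed knot complement the interval of strict L-space slopes is $(0,\tfrac{1}{2g(K)-1-n})$, which misses $1$ precisely when $n<2g(K)-1$; and (iv) invokes Hanselman--Watson's gluing criterion to conclude the glued manifold is not an L-space.

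What this buys: the ``main obstacle'' you flag --- tracking an unstable chain of slope $n$ through $r$ successive fillings --- is subsumed by the loop-calculus merge calculus, which handles all $r$ and all positive $p_i$ uniformly; and the pairing step you outline is replaced by a one-line slope comparison. Your language of ``unstable chains'' and ``horizontal chains'' is tailored to knot complements in $S^3$ and does not apply verbatim to $\hat V_r$, so carrying out your plan literally would require reformulating it in loop-calculus terms anyway --- at which point you are essentially redoing the paper's argument. Your sketch is correct in spirit, but the paper's route avoids the direct $\boxtimes$ computation entirely.
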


\begin{proof}[Proof of Theorem \ref{thm:cablelink}.]
If $K_{rm, rn}$ is an L-space link, then by \cite[Lemma 1.10]{Liu} all its components are L-space knots. On the other hand, its components are isotopic to $K_{m, n}$. Thus, if $m>1$, then by Theorem \ref{thm:cableknot}, $K$ is an L-space knot and $n/m > 2g(K)-1$. If $m=1$, then $K$ must be an L-space knot and by Proposition \ref{prop:nonLspace}, $n \geq 2g(K)-1$.

Conversely, suppose that $K$ is an L-space knot and $n/m \geq 2g(K)-1$, i.e., $K_{m,n}$ is an L-space knot. Let us prove by induction on $r$ that $(p_1,\ldots,p_r)$-surgery on $K_{rm, rn}$ is an L-space if   $p_i>l$ for all $i$. For $r=1$ it is clear. By Proposition \ref{surgery}, the link $K_{rm, rn}$ admits an L-space surgery with parameters $l,p_2,\ldots,p_r$. Let us  apply Theorem \ref{L space criterion}. Indeed, by Corollary \ref{positive definite}, one has $\det(\Lambda(l,p_2\ldots, p_r)|_{L'}) >0$ and by the induction assumption
$S^3_{\Lambda(l,p_2\ldots, p_r)|_{L'}}(L')$ is an L--space for all sublinks $L'$. By \cite[Lemma 2.5]{Liu}, $(p_1,\ldots,p_r)$-surgery on $K_{rm, rn}$ is also an L-space for all $p_1>l$. Therefore $K_{rm, rn}$ is an L-space link.
\end{proof}

\subsection{Proof of Proposition \ref{prop:nonLspace}}\label{subsec:bordered}

We will prove Proposition \ref{prop:nonLspace} using Lipshitz-Ozsv\'ath-Thurston's bordered Floer homology \cite{LOT}, specifically Hanselman-Watson's \cite{HanselmanWatson} loop calculus. That is, we will decompose the result of surgery on $K_{r, rn}$ into two pieces, one that is surgery on a torus link in the solid torus and the other the knot complement, and then apply a gluing result of Hanselman-Watson to conclude that the result of this surgery along $K_{r, rn}$ is not an L-space. The following was described to us by Jonathan Hanselman.

Let $Y_1$ denote the Seifert fibered space obtained by performing $(p_1, \dots, p_r)$-surgery on the $r$-component $(r, 0)$-torus link in the solid torus. Consider the bordered manifold $(Y_1, \alpha_1, \beta_1)$, where $\alpha_1$ is the fiber slope and $\beta_1$ lies in the base orbifold; that is, $\alpha_1$ is the longitude and $\beta_1$ the meridian of the original solid torus. Let $(Y_2, \alpha_2, \beta_2)$ be the $n$-framed complement of $K$; that is, $Y_2 = S^3 \setminus N(K)$, $\alpha_2$ is an $n$-framed longitude, and $\beta_2$ is a meridian. Let $(Y_1, \alpha_1, \beta_1) \cup (Y_2, \alpha_2, \beta_2)$ denote the result of gluing $Y_1$ to $Y_2$ by identifying $\alpha_1$ with $\alpha_2$ and $\beta_1$ with $\beta_2$. Note that $(Y_1, \alpha_1, \beta_1) \cup (Y_2, \alpha_2, \beta_2)$ is homeomorphic to $(p_1, \dots, p_r)$-surgery along $K_{r, rn}$. We identify the slope $p \alpha_i + q \beta_i$ on $\partial Y_i$ with the (extended) rational number $\frac{p}{q} \in \Q \cup \{ \frac{1}{0}\}$.

The following lemma gives a description of $\CFD(Y_1, \alpha_1, \beta_1)$ in terms of the standard notation defined in \cite[Section 3.2]{HanselmanWatson}.

\begin{lemma}\label{lem:CFDY1}
The invariant $\CFD(Y_1, \alpha_1, \beta_1)$ can be written in standard notation as a product of $d_{k_i}$ where 
\begin{enumerate}
\item $k_i \leq 0$ for all $i$,
\item $k_i=0$ for at least one $i$,
\item $k_i=-r$ for exactly one $i$.
\end{enumerate}
\end{lemma}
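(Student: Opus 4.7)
The plan is to compute $\CFD(Y_1,\alpha_1,\beta_1)$ directly, using the fact that $Y_1$ is a Seifert fibered space and that bordered invariants of such manifolds are of loop-type in the sense of Hanselman-Watson. First I would identify $(Y_1,\alpha_1,\beta_1)$ explicitly as the Seifert fibered space over a disk with $r$ exceptional fibers of invariants $(p_i,1)$, with the remaining boundary torus parametrized so that $\alpha_1$ is a regular fiber and $\beta_1$ is a lift of a section of the base orbifold. This identification comes directly from the construction: the $r$ parallel cores of the solid torus are the multiplicity-$p_i$ fibers after surgery, the original solid torus framing is the regular fiber direction, and the meridian of the original solid torus projects to a base-orbifold curve.

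Next I would decompose $Y_1$ as an iterated gluing of elementary pieces and compute $\CFD$ stepwise. Write $Y_1=M_0\cup(V_1\sqcup\cdots\sqcup V_r)$, where $M_0$ is the trivial $S^1$-bundle over an $(r+1)$-holed sphere (with one boundary torus designated as the bordered boundary), and the $V_i$ are solid tori attached along the other $r$ boundaries with meridian slopes $p_i$ relative to the natural section/fiber basis. The bordered invariant of $M_0$ in the fiber-section parametrization is well known to be of loop-type, and I would then track how the loop decomposition changes when each $V_i$ is attached. In the loop calculus, gluing a solid torus with surgery slope $p_i>0$ along a boundary corresponds to box-tensoring with an explicit type $DA$ bimodule whose action on loops is combinatorially described; each such gluing acts on individual loops by a rule that either lengthens them or truncates their $k$-values downward by an explicit amount.

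Finally I would read off the three properties from the loop decomposition after all $r$ gluings. The inequality $k_i\le 0$ for every loop reflects the positivity $p_i>0$ of the surgery slopes, which forces the box-tensor operations to move loops only in the non-positive direction of the $d_\ast$ hierarchy. The single loop with $k_i=-r$ arises as the unique ``spine'' loop that picks up one shift of $-1$ at each of the $r$ successive solid-torus gluings (combinatorially, it is the loop traversing the full section of the base orbifold once and winding once around each exceptional fiber). The existence of at least one $k_i=0$ loop comes from the ``fiber-direction'' generator that is unaffected by the solid-torus attachments, which remains a trivial $d_0$ loop after all gluings.

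The main obstacle is the bookkeeping: making sure that the loop decomposition at each intermediate stage is tracked correctly and that no $d_k$ with $k>0$ or with $k<-r$ can appear, and that exactly one spine loop produces $k=-r$ rather than several. This amounts to a careful induction on $r$ using the explicit box-tensor formulas of \cite[Section 3.2]{HanselmanWatson}; the base case $r=1$ (a lens space complement) is a direct check, and the inductive step analyzes the unique non-trivial loop passing through the new exceptional fiber.
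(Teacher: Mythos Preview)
Your general strategy---identifying $(Y_1,\alpha_1,\beta_1)$ as a Seifert fibered piece and computing via the Hanselman--Watson loop calculus---matches the paper's, but the execution diverges in a way that creates real difficulties. Your decomposition $Y_1=M_0\cup(V_1\sqcup\cdots\sqcup V_r)$ makes $M_0$ a manifold with $r+1$ torus boundary components, so its bordered invariant is a multimodule rather than a single loop; the loop calculus of \cite{HanselmanWatson} is formulated for one-boundary pieces, and there is no ready-made statement that $\CFD(M_0)$ is ``of loop type'' in the relevant sense. Also, Dehn filling by a solid torus is a pairing with a type~$A$ module rather than a type~$DA$ bimodule, and it is not immediate that such a filling sends loops to loops in standard $d_k$-notation. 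Your heuristic about a ``spine loop'' accruing a shift of $-1$ at each exceptional fiber and a ``fiber-direction'' $d_0$ loop surviving is suggestive, but making it rigorous requires exactly the combinatorial formulas you postpone to the final paragraph.

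The paper instead uses the plumbing-tree formalism of \cite[Section~6]{HanselmanWatson} throughout: it realizes $Y_1$ as the graph manifold for a star-shaped tree with central weight~$0$ and leaves $p_1,\dots,p_r$, builds each arm $\Gamma_i$ from the loop $(d_0)$ by applying the twist and extend operations $\sct^{p_i}$ and $\sce$ to obtain $(d_{-1}\,\underbrace{d_0\cdots d_0}_{p_i})$, and then assembles $\Gamma$ by iterating the merge operation~$\cM$. The three claimed properties are then read off from \cite[Proposition~6.4]{HanselmanWatson}, which controls how merge acts on products of $d_k$-segments; in particular, the single $d_{-r}$ appears because exactly $r-1$ merges are performed. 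Your inductive idea in the last paragraph is essentially this merge iteration in disguise, so the cleanest repair is to drop the $M_0$-with-many-boundaries picture and work directly with the operations $\sct$, $\sce$, $\cM$ on one-boundary loop-type pieces from the outset.
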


\begin{proof}
The computation is similar to the example in \cite[Section 6.5]{HanselmanWatson}. A plumbing tree $\Gamma$ for $Y_1$ is given in Figure \ref{fig:plumbingtree}.  We first consider the plumbing tree $\Gamma_i$ in Figure \ref{fig:Gammai}. We will build $\Gamma$ by merging the $\Gamma_i$, $i=1, \dots, r$.

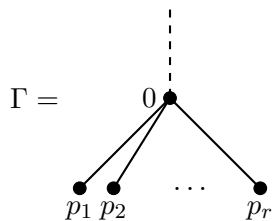
\begin{figure}[ht]
\begin{tikzpicture}[scale=0.6]

\tikzstyle{every node}=[inner sep=0,outer sep=0]
	
	\filldraw (-2, 0) circle (4pt) node[] (a){};
	\filldraw (-1.25, 0) circle (4pt) node[] (b){};
	\filldraw (2, 0) circle (4pt) node[] (c){};
	\filldraw (0, 2) circle (4pt) node[] (r){};
	\filldraw (0, 4) node[] (x){};

	\node [below, yshift=-5pt] at (a) {$p_1$};
	\node [below, yshift=-5pt] at (b) {$p_2$};
	\node [below, yshift=-5pt] at (c) {$p_r$};
	\node [left, xshift=-5pt] at (r) {$0$};
	\node at (0.5,0) {$\dots$};
	\node at (-3, 2) {$\Gamma=$};
		
	\draw [thick, -] (r) -- (a);
	\draw [thick, -] (r) -- (b);
	\draw [thick, -] (r) -- (c);
	\draw [thick, dashed, -] (r) -- (x);
\end{tikzpicture}
\caption{The plumbing tree $\Gamma$.}
\label{fig:plumbingtree}
\end{figure}

We proceed as in \cite[Section 6.5]{HanselmanWatson}. Start with a loop $(d_0)$ representing the tree $\Gamma_0$ in Figure \ref{fig:Gamma0}. We have that $\Gamma_i = \cE(\cT^{p_i}(\Gamma_0))$ so by \cite[Sections 3.3 and 6.3]{HanselmanWatson}:
\begin{align*} 
	\CFD(\Gamma_i) &= \sce(\sct^{p_i}((d_0))) \\
		&= \sce((d_{p_i})) \\
		&= (d^*_{-p_i}) \\
		&\sim (d_{-1} \underbrace{d_0 \dots d_0}_{p_i}).
\end{align*}

\begin{figure}[ht]
\subfigure[]{
\begin{tikzpicture}[scale=0.6]
\tikzstyle{every node}=[inner sep=0,outer sep=0]
	\filldraw (-2, 0) circle (4pt) node[] (a){};
	\filldraw (0, 0) circle (4pt) node[] (r){};
	\filldraw (2, 0) node[] (x){};
	\node [below, yshift=-5pt] at (a) {$p_i$};
	\node [below, yshift=-5pt] at (r) {$0$};
	\node at (-3.5, 0) {$\Gamma_i=$};
	\draw [thick, -] (r) -- (a);
	\draw [thick, dashed, -] (r) -- (x);
	\node at (0, -2) {};
\end{tikzpicture}
\label{fig:Gammai}
}
\hspace{50pt}
\subfigure[]{
\begin{tikzpicture}[scale=0.6]
\tikzstyle{every node}=[inner sep=0,outer sep=0]
	\filldraw (-9, 0) circle (4pt) node[] (s){};
	\filldraw (-7, 0) node[] (y){};
	\node [below, yshift=-5pt] at (s) {$0$};
	\node at (-10.5, 0) {$\Gamma_0=$};
	\draw [thick, dashed, -] (s) -- (y);
	\node at (-8, -2) {};
\end{tikzpicture}
\label{fig:Gamma0}
}
\caption{Left, the plumbing tree $\Gamma_i$. Right, the plumbing tree $\Gamma_0$.}
\label{fig:miniplumbingtree}
\end{figure}
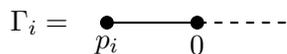
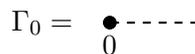

We then have that $\Gamma = \cM(\Gamma_2, \cM(\Gamma_2, \dots, \cM(\Gamma_{p_{r-1}}, \Gamma_{p_r})))$.
By \cite[Proposition 6.4]{HanselmanWatson}, we have that $\CFD(\Gamma)$ is a represented by a product of $d_{k_i}$ where $k_i \leq 0$ for all $i$ and $k_i=0$ for at least one $i$ since each $p_i > 0$. Moreover, $d_{-r}$ appears exactly once in the product, since we performed $r-1$ merges. This completes the proof of the lemma.
\end{proof}

\begin{lemma}\label{lem:Y1}
The slope $1$ is not a strict L-space slope on $(Y_1, \alpha_1, \beta_1)$.
\end{lemma}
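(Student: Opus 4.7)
The plan is to invoke Hanselman--Watson's loop-calculus criterion for strict L-space slopes, applied to the decomposition of $\CFD(Y_1, \alpha_1, \beta_1)$ obtained in Lemma~\ref{lem:CFDY1}. For a bordered three-manifold whose type~D invariant is homotopy equivalent to a product of standard loops $d_{k_i}$, the set of strict L-space slopes forms an explicit (possibly empty) interval in $\Q\cup\{\infty\}$ whose endpoints are read off from the integer sequence $\{k_i\}$; this is precisely the setting we are in.

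The next step is to substitute the constraints from Lemma~\ref{lem:CFDY1}: $k_i\le 0$ for every $i$, with $k_i=0$ for at least one $i$ and $k_i=-r$ for exactly one $i$. Each nonpositive $d_{k_i}$ imposes a one-sided restriction on the strict L-space interval, and their combined effect --- in particular the restriction forced by the $d_0$ factor --- pins the interval into the region $(-\infty,0)$ in the $(\alpha_1,\beta_1)$ slope coordinates (or else leaves it empty). Since $1>0$, it lies outside this interval, and hence $1$ is not a strict L-space slope of $(Y_1,\alpha_1,\beta_1)$.

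The main obstacle I expect is the bookkeeping. One has to match the slope basis $(\alpha_1,\beta_1)$ used here with the conventions of \cite{HanselmanWatson}; verify how each $d_{k_i}$ factor contributes to the endpoints of the L-space interval after the iterated merges $\cM$ used in Lemma~\ref{lem:CFDY1}; and apply the positive-slope exclusion coming from the $d_0$ factor. Fortunately, for the current statement a coarse form of the criterion suffices --- we do not need the exact endpoints of the interval, only the conclusion that they are at most $0$, so it is enough to extract the ``positivity-obstruction'' from a single $d_0$ factor rather than compute the full interval.
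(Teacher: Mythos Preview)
Your outline points in the right direction, but what you call ``bookkeeping'' is the entire content of the proof, and your key heuristic is not correct as stated. There is no statement in \cite{HanselmanWatson} saying that a single $d_0$ factor excludes positive strict L-space slopes; the obstruction arises from the \emph{coexistence} of the $d_0$ factor and the $d_{-r}$ factor (with $r\ge 2$), not from $d_0$ alone. The paper makes this precise by applying the Dehn-twist operation $\sct$ (which sends $d_k\mapsto d_{k+1}$) to convert the question about slope $1$ on $(Y_1,\alpha_1,\beta_1)$ into the question about slope $0$ on $(Y_1,\alpha_1,\beta_1+\alpha_1)$. After twisting, the loop contains both a positive segment $d_1$ (coming from the original $d_0$) and a negative segment $d_{1-r}$ (coming from the original $d_{-r}$). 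The paper then argues that any loop containing both positive and negative $d_k$ pieces must, in dual notation, contain an $a^*_i$ or $b^*_j$ segment; since the loop here is simple, Proposition~4.24 of \cite{HanselmanWatson} forces that segment to have nonnegative subscript, and Proposition~4.18 of \cite{HanselmanWatson} then rules out $0$ as a strict L-space slope on the twisted manifold, hence $1$ on the original.

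A smaller correction: your proposed interval $(-\infty,0)$ is also off. By Remark~\ref{rk:slopesY1} both $0$ and $\infty$ \emph{are} strict L-space slopes on $(Y_1,\alpha_1,\beta_1)$, so the interval of strict L-space slopes in fact contains $[-\infty,0]$. This does not change the conclusion for slope $1$, but it shows that the endpoints cannot be read off as casually as you suggest.
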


\begin{proof}
We will apply a positive Dehn twist to $(Y_1, \alpha_1, \beta_1)$ to obtain $(Y_1, \alpha_1, \beta_1 + \alpha_1)$. We will show that $0$ is not a strict L-space slope on $(Y_1, \alpha_1, \beta_1 + \alpha_1)$, and hence $1$ is not a strict L-space slope on $(Y_1, \alpha_1, \beta_1)$.

By \cite[Proposition 6.1]{HanselmanWatson}, we have that $\CFD(Y_1, \alpha_1, \beta_1 + \alpha_1)$ can be obtained by applying $\sct$ to a loop representative of $\CFD(Y_1, \alpha_1, \beta_1)$. Since $\sct(d_k) = d_{k+1}$, it follows from Lemma \ref{lem:CFDY1} that $\CFD(Y_1, \alpha_1, \beta_1 + \alpha_1)$ can be written in standard notation as a product of $d_{k_i}$ with $k_i \leq 1$ for all $i$, $k_i=1$ for at least one $i$, and $k_i=1-r$ for exactly one $i$.

We claim that if a loop $\bell$ contains both positive and negative $d_k$ segments (i.e., both $d_i, i>0$ and $d_j, j<0$), then in dual notation $\bell$ contains at least one $a^*_i$ or $b^*_j$ segment. Indeed, suppose by contradiction that $\bell$ has no $a^*_i$ or $b^*_j$. Then $\bell$ consists of only $d^*_i$ segments, $i \in \Z$. It is straightforward to see (for example, by considering the segments as drawn in \cite[Figure 1]{HanselmanWatson}) that one cannot obtain a loop containing both positive and negative $d_k$ segments from $d^*_i$ segments, $i \in \Z$. This completes the proof of the claim.

Furthermore, note that $\CFD(Y_1, \alpha_1, \beta_1 + \alpha_1)$ consists of simple loops (see Definition 4.19 of \cite{HanselmanWatson}). Then by \cite[Proposition 4.24]{HanselmanWatson}, in dual notation $\bell$ has no $a^*_k$ or $b^*_k$ segments for $k<0$. It now follows from Proposition 4.18 of \cite{HanselmanWatson} that $0$ is not a strict L-space slope for $\CFD(Y_1, \alpha_1, \beta_1 + \alpha_1)$. Therefore, $1$ is not a strict L-space slope on $(Y_1, \alpha_1, \beta_1)$, as desired.
\end{proof}

\begin{remark}\label{rk:slopesY1}
Note that by Proposition 4.18 of \cite{HanselmanWatson}, we have that $0$ and $\infty$ are strict L-space slopes on  $(Y_1, \alpha_1, \beta_1)$. Since $1$ is not a strict L-space slope, it follows from Corollary 4.5 of  \cite{HanselmanWatson} that the interval of L-space slopes of $(Y_1, \alpha_1, \beta_1)$ contains the interval $[-\infty, 0]$.
\end{remark}

\begin{remark}
An alternative proof of Lemma \ref{lem:Y1} follows from \cite[Theorem 1.1]{LiscaStipsicz}. Indeed, by setting $r_i=1/p_i$ and $e_0=-1$ in Figure 1 of \cite{LiscaStipsicz}, we see that $M(-1; 1/p_1, \dots, 1/p_r)$ is not an L-space, hence neither is $M(1; -1/p_1, \dots, -1/p_r)$, which is homeomorphic to filling $(Y_1, \alpha_1, \beta_1)$ along a curve of slope 1.
\end{remark}

\begin{lemma}\label{lem:Y2}
Let $K$ be an L-space knot. If $n < 2g(K)-1$, then $1$ is not a strict L-space slope on the $n$-framed knot complement $(Y_2, \alpha_2, \beta_2)$.
\end{lemma}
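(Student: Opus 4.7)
The plan is to translate slope~$1$ on $(Y_2,\alpha_2,\beta_2)$ into a standard Dehn-surgery coefficient on $K$ and then invoke the Ozsv\'ath--Szab\'o classification of L-space surgeries on L-space knots.

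Let $\lambda$ and $\mu$ denote the Seifert ($0$-framed) longitude and meridian of $K$, so that $\alpha_2 = \lambda + n\mu$ and $\beta_2 = \mu$. Under the parametrization $p\alpha_2 + q\beta_2 \leftrightarrow p/q$, slope~$1$ corresponds to the curve $\alpha_2+\beta_2 = \lambda + (n+1)\mu$, and Dehn filling along this slope yields $S^3_{n+1}(K)$. More generally, a bordered slope $s$ on $(Y_2,\alpha_2,\beta_2)$ corresponds to Dehn surgery on $K$ with coefficient $n+1/s$, and this identification is a homeomorphism of slope circles.

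Next, since $K$ is an L-space knot of Seifert genus $g(K)$, the classification of L-space surgeries \cite[Corollary 1.4]{OSrational}, combined with the standard obstruction at slopes strictly less than $2g(K)-1$, shows that the set of L-space surgery slopes on $K$ forms the closed arc $[2g(K)-1,\infty]$ on the slope circle; its interior, and hence the set of strict L-space slopes, is the open arc $(2g(K)-1,\infty)$. Transporting back via $s \mapsto n+1/s$, slope~$1$ on $(Y_2,\alpha_2,\beta_2)$ is a strict L-space slope if and only if $n+1 \in (2g(K)-1,\infty)$, equivalently $n \geq 2g(K)-1$. The hypothesis $n < 2g(K)-1$ rules this out, so slope~$1$ is not a strict L-space slope.

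The main delicate point is the boundary case $n = 2g(K)-2$: there $(n+1)$-surgery is itself an L-space (since $n+1 = 2g(K)-1$), yet slope~$1$ must still fail to be strict because $2g(K)-1$ lies on the boundary of the L-space arc rather than in its interior. This is precisely why the lemma is phrased in terms of strict L-space slopes, and is the point one must handle carefully when applying the topological (interior) notion of ``strict'' to the Ozsv\'ath--Szab\'o classification. A more bordered alternative would be to apply $\sct^n$ to a loop representative of $\CFD$ of the $0$-framed knot complement and then invoke the interval characterization of L-space slopes from \cite{HanselmanWatson} directly; either route produces the same conclusion.
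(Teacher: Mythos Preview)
Your proof is correct and follows essentially the same approach as the paper: translate the bordered slope into a standard surgery coefficient via $s \mapsto n + 1/s$ and invoke the Ozsv\'ath--Szab\'o classification of L-space surgeries on an L-space knot. The paper phrases the conclusion by computing the full interval of strict L-space slopes as $(0,\tfrac{1}{2g(K)-1-n})$ and noting that $1$ lies outside it, whereas you check slope~$1$ directly; the content is identical.
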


\begin{proof}
Since $K$ is an L-space knot, we have that $S^3_K(p/q)$ is an L-space exactly when $p/q \geq 2g(K)-1$. Since $\alpha_2$ is an $n$-framed longitude, it follows that the interval of strict L-space slopes on $(Y_2, \alpha_2, \beta_2)$ is $(0, \frac{1}{2g(K)-1-n})$, that is, the reciprocal of the interval $(2g(K)-1-n, \infty)$.
\end{proof}

\begin{proof}[Proof of Proposition \ref{prop:nonLspace}]
The result now follows from \cite[Theorem 1.3]{HanselmanWatson} combined with Lemmas \ref{lem:Y1} and \ref{lem:Y2}; the slope $1$ is not a strict L-space slope on either $(Y_1, \alpha_1, \beta_1)$ or $(Y_2, \alpha_2, \beta_2)$, and so the resulting manifold $(Y_1, \alpha_1, \beta_1) \cup (Y_2, \alpha_2, \beta_2)$, which is $(p_1, \dots, p_r)$-surgery on $K_{r, rn}$, is not an L-space.
\end{proof}

\begin{remark}
One can use similar methods to provide an alternate proof that $K_{r,rn}$ is an L-space link if $K$ is an L-space knot and $n \geq 2g(K)-1$.
Indeed, if $K$ is an L-space knot, then the interval of strict L-space slopes on the $n$-framed knot complement $(Y_2, \alpha_2, \beta_2)$ is $(0, \frac{1}{2g(K)-1-n})$ if $n \leq 2g(K)-1$ and $(0, \infty] \cup [-\infty, \frac{1}{2g(K)-1-n})$ if $n > 2g(K)-1$. Hence if $n \geq 2g(K)-1$, then the interval of strict L-space slopes on $(Y_2, \alpha_2, \beta_2)$ contains the interval $(0, \infty)$. By Remark \ref{rk:slopesY1}, we have that the interval of strict L-space slopes on $(Y_1, \alpha_1, \beta_1)$ contains $[-\infty, 0]$. Therefore, by \cite[Theorem 1.4]{HanselmanWatson}, if $n \geq 2g(K)=1$, then the result of positive surgery (i.e., each surgery coefficient is positive) on $K_{r, rn}$ is an L-space.
\end{remark}

\section{A spectral sequence for L-space links}

In this section we review some material from \cite{gn}. Given $u,v\in \Z^r$, we write $u\preceq v$ if $u_i\leq v_i$ for all $i$,
and $u\prec v$ if $u\preceq v$ and $u\neq v$. Recall that we work with $\F=\Z/2\Z$ coefficients.

\begin{definition}
Given a $r$-component oriented link $L$, we define an affine lattice over $\Z^r$:
$$
\HH(L) =\bigoplus_{i=1}^{r}\HH_i(L),\qquad  \HH_i(L)=\Z+\frac{1}{2}\lk(L_i,L-L_i).
$$
\end{definition}

Let us recall that the Heegaard-Floer complex for a $r$-component link $L$ is naturally filtered by the subcomplexes $A^{-}_{L}(L;v)$ of $\F[U_1,\ldots,U_r]$-modules for $v\in \HH(L)$.
Such a subcomplex is spanned by the generators in the Heegaard-Floer complex of Alexander filtration less than or equal to $v$ in the natural partial order on $\HH(L)$.
The group $\HFL(L,v)$ can be defined as the homology of the associated graded complex:
\begin{equation}
\label{def of HFL}
\HFL(L,v)=H_{*}\left(A^{-}(L;v)/\sum_{u\prec v}A^{-}(L;u)\right).
\end{equation}

One can forget a component $L_r$ in $L$ and consider the $(r-1)$-component link $L-L_r$. 
There is a natural forgetful map $\pi_r:\HH(L)\to \HH(L-L_r)$ defined by the equation:
$$
\pi_r(v_1,\ldots,v_r)=\left(v_1-\lk(L_1,L_r)/2,\ldots,v_{r-1}-\lk(L_{r-1},L_r)/2\right).
$$
Similarly, one can define a map $\pi_{L'}:\HH(L)\to \HH(L')$ for every sublink $L'\subset L$.
Furthermore, for large $v_r\gg 0$ the subcomplexes $A^{-}(L;v)$ stabilize, and by \cite[Proposition 7.1]{OSlinks} one has
a natural homotopy equivalence $A^{-}(L;v)\sim  A^{-}(L-L_r;\pi_r(v))$. More generally, for a sublink $L'=L_{i_1}\cup\ldots \cup L_{i_{r'}}$ one gets
\begin{equation}
\label{projection for a-}
A^{-}(L';\pi_{L'}(v))\sim  A^{-}(L;v),\ \text{if}\ v_i\gg 0\ \text{for}\ i\notin\{i_1\ldots,i_{r'}\}.
\end{equation}

We will use the ``inversion theorem'' of \cite{gn}, expressing the $h$-function of a link in terms of the  Alexander polynomials of its sublinks, or, equivalently, the Euler characteristics of their Heegaard-Floer homology. Define $\chi_{L,v}:=\chi(\HFL(L,v))$. Then by \cite{OSlinks}
$$\chi_L(t_1,\ldots,t_r):=\sum_{v\in \HH(L)}\chi_{L,v}t_1^{v_1}\cdots t_r^{v_r}=\begin{cases}
(t_1\cdots t_r)^{1/2}\Delta(t_1,\ldots,t_r), & \text{if}\ r>1\\
\Delta(t)/(1-t^{-1}), & \text{if}\ r=1,
\end{cases}
$$
where $\Delta(t_1,\ldots,t_r)$ denotes the {\em symmetrized} Alexander polynomial. 

\begin{remark}
We choose the factor $(t_1\cdots t_r)^{1/2}$ to match more established conventions on the gradings for the hat-version of link Floer homology.
For example, the Alexander polynomial of the Hopf link equals $1$, and one can check \cite{OSlinks} that $\HFLhat$ is supported in Alexander degrees 
$(\pm\frac{1}{2},\pm\frac{1}{2})$. Since the maximal Alexander degrees in  $\HFLhat$ and $\HFL$ coincide, one gets 
$\chi_{T(2,2)}(t_1,t_2)=t_1^{1/2}t_2^{1/2}$.
\end{remark}

The following ``large surgery theorem'' underlines the importance of $A^{-}(L;v)$.

\begin{theorem}[\cite{MO}]
The homology of  $A^{-}(L;v)$ is isomorphic to the Heegaard-Floer homology of a large surgery on $L$ with $spin_c$-structure specified by $v$. In particular, if $L$ is an L-space link, then $H_{*}(A^{-}(L,v))\simeq \F[U]$ for all $v$ and all $U_i$ are homotopic to each other on the subcomplex  $A^{-}(L;v)$.
\end{theorem}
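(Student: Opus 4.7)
The plan is to follow Manolescu-Ozsv\'ath's link surgery formula \cite{MO}, which expresses $HF^-$ of integer surgery on $L$ as the homology of a hyperbox of chain complexes whose vertices are precisely the generalized Floer complexes $A^-(L;v)$. The starting point is a multi-pointed Heegaard diagram for $L$ with two basepoints on each component; the Alexander filtration on $CFL^-$ is induced by relative $\text{spin}_c$-structures, and $A^-(L;v)$ consists of those generators whose filtration level is $\preceq v$, with $U_i$ acting as the basepoint map for the $i$-th component.

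For part (1), fix a diagonal surgery framing $\Lambda = \operatorname{diag}(p_1,\ldots,p_r)$ with all $p_i$ much larger than twice the Seifert genus of any sublink. Under the affine identification $\text{spin}_c(S^3_\Lambda(L)) \simeq \HH(L)/\Lambda\HH(L)$, each class has a unique representative $v$ in a prescribed fundamental domain. Whenever a hyperbox edge map would shift $v$ outside this domain, the stabilization property \eqref{projection for a-} identifies the target with a complex for a proper sublink; an analysis of the destabilization maps shows that these edge maps become null-homotopic in the summand of interest once $\Lambda$ is large. The summand of $CF^-(S^3_\Lambda(L))$ in the $\text{spin}_c$-structure corresponding to $v$ therefore collapses to $A^-(L;v)$ itself, giving the asserted isomorphism on homology.

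For part (2), if $L$ is an L-space link then $S^3_\Lambda(L)$ is an L-space for large $\Lambda$, so $HF^-(S^3_\Lambda(L),\mathfrak{s}) \simeq \F[U]$ in every $\text{spin}_c$-structure, and part (1) yields $H_*(A^-(L;v))\simeq \F[U]$. Each $U_i$ is induced by moving past the $i$-th basepoint, and after surgery all basepoints on the connected manifold $S^3_\Lambda(L)$ induce the same $U$ action on $HF^-$, so $U_i - U_j$ vanishes on $H_*(A^-(L;v))$. Since this homology is free of rank one over $\F[U]$, the chain complex $A^-(L;v)$ is formal, and any chain endomorphism inducing zero on homology is null-homotopic; thus $U_i$ and $U_j$ are chain homotopic on $A^-(L;v)$.

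The main obstacle lies in part (1): matching Alexander filtration levels with $\text{spin}_c$-structures on the surgered manifold, and verifying that the off-diagonal maps in the surgery hyperbox vanish in the relevant summand for large $\Lambda$. This bookkeeping, together with the admissibility arguments needed for the multi-pointed diagrams, is the analytic heart of the construction in \cite{MO}; once it is in place, the L-space consequences in part (2) are essentially formal.
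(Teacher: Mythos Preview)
This theorem is not proved in the paper at all; it is quoted from Manolescu--Ozsv\'ath \cite{MO} and used as a black box, so there is no ``paper's own proof'' to compare your proposal against. What you have written is a fair outline of the relevant portion of \cite{MO}: the large surgery theorem for links is exactly the statement that for sufficiently large framing the link surgery hypercube, restricted to the $\text{spin}_c$-summand labeled by $v$, collapses to the single vertex complex $A^-(L;v)$, and the L-space hypothesis then forces $H_*(A^-(L;v))\simeq\F[U]$.

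One small comment on your part (2). The formality step is correct but you should be explicit about the ground ring. The complex $A^-(L;v)$ is free over each $\F[U_i]$ separately (it is a sum of shifted copies of $\F[U_1,\ldots,U_r]$), so once you know that $U_i$ and $U_j$ induce the same map on $H_*(A^-(L;v))\simeq\F[U]$, the difference $U_i-U_j$ is an $\F[U_i]$-linear chain endomorphism of a bounded-below free $\F[U_i]$-complex with free rank-one homology, and the null-homotopy follows. The prior step, that the $U_i$ agree on homology, does require knowing that the large-surgery quasi-isomorphism carries each $U_i$ to a basepoint action on $CF^-$ of the surgered manifold; this is part of the construction in \cite{MO} rather than a formal consequence, so it is worth flagging as an input rather than asserting it in passing.
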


One can show that for L-space links the inclusion $h_{v}:A^{-}(L,v)\hookrightarrow A^{-}(S^3)$ is injective on homology,
so it is multiplication by $U^{h_L(v)}$. Therefore the generator of $H_{*}(A^{-}(L,v))\simeq \F[U]$ has homological degree $-2h_L(v)$.
The function $h_L(v)$ will be called the {\em $h$--function} for an L--space link $L$. In \cite{gn} it was called an ``HFL-weight function''.

Furthermore, if $L$ is an L-space link, then for large $N\in \HH(L)$ one has
$$
\chi\left(A^{-}(L;N)/A^{-}(L,v)\right)=h_L(v).
$$ 
Hence, by \eqref{def of HFL} and the inclusion-exclusion formula one can write:
\begin{equation}
\label{chi from h}
\chi_{L,v}=\sum_{B\subset \{1,\ldots,r\}}(-1)^{|B|-1}h_L(v-e_B),
\end{equation}
where $e_B$ denotes the characteristic vector of the subset $B\subset \{1,\ldots,r\}$. 
Furthermore, by \eqref{projection for a-} for a sublink $L'=L_{i_1}\cup\ldots \cup L_{i_{r'}}$ one gets
\begin{equation}
\label{projection for h}
h_{L'}(\pi_{L'}(v))=h_L(v),\ \text{if}\ v_i\gg 0\ \text{for}\ i\notin\{i_1\ldots,i_{r'}\}.
\end{equation}
For $r=1$ equation \eqref{chi from h} has the form $\chi_{L,v}=h(v-1)-h(v)$, so $h(v)$ can be easily reconstructed from the Alexander polynomial: $h_L(v)=\sum_{u\ge v+1}\chi_{L,v}.$ For $r>1$, one can also show that equation \eqref{chi from h} (together with the boundary conditions \eqref{projection for h}) has a unique solution, which is given by the following theorem: 

\begin{theorem}[\cite{gn}]
\label{th:invertion}
The $h$-function of an L-space link is determined by the Alexander polynomials of its sublinks as following:
\begin{equation}
\label{invertion}
h_{L}(v_1,\ldots,v_r)=\sum_{L'\subseteq L}(-1)^{r'-1}\sum_{u\succeq \pi_{L'}(v+\mathbf{1})}\chi_{L',u},
\end{equation}
where the sublink $L'$ has $r'$ components and $\mathbf{1}=(1,\ldots,1)$.
\end{theorem}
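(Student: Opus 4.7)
The plan is to prove the formula by induction on the number of components $r$. The base case $r=1$ is the classical telescoping identity $h_L(v)=\sum_{u\geq v+1}\chi_{L,u}$ already noted in the text. For the inductive step, write the right-hand side of \eqref{invertion} as $\widetilde h_L(v)$. I plan to verify (a) that $\widetilde h_L$ satisfies the recursion \eqref{chi from h} and (b) that $\widetilde h_L(v)=h_L(v)$ whenever some coordinate $v_j$ is sufficiently large. These together force $\widetilde h_L=h_L$: the difference $f=\widetilde h_L-h_L$ satisfies $\sum_B(-1)^{|B|-1}f(v-e_B)=0$ and vanishes outside a bounded region, and rewriting this relation as $f(w)=\sum_{\emptyset\neq C\subseteq\{1,\ldots,r\}}(-1)^{|C|-1}f(w+e_C)$ lets a downward induction propagate the vanishing inward from infinity.

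For (b), fix $j$ and take $v_j$ large. For every sublink $L'\ni L_j$, the $j$-th coordinate of $\pi_{L'}(v+\mathbf{1})$ grows with $v_j$, and the sum $\sum_{u\succeq \pi_{L'}(v+\mathbf{1})}\chi_{L',u}$ vanishes (by finite support of $\chi_{L',u}$ when $L'$ has at least two components, and by the asymptotic vanishing of the knot $h$-function when $L'=L_j$). The remaining sum runs over $L'\subseteq L\setminus L_j$; using the compatibility $\pi_{L'}^{L}(v+\mathbf{1}_{L})=\pi_{L'}^{L\setminus L_j}(\pi_j(v)+\mathbf{1}_{L\setminus L_j})$, which follows by separating the $\lk$-shifts according to whether an omitted component lies in $L$ or in $L\setminus L_j$, it reduces to $\widetilde h_{L\setminus L_j}(\pi_j(v))$. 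By the inductive hypothesis this equals $h_{L\setminus L_j}(\pi_j(v))$, and \eqref{projection for h} identifies this with $h_L(v)$.

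For (a), I would expand $\sum_B(-1)^{|B|-1}\widetilde h_L(v-e_B)$ and exchange the two sums. Affineness of $\pi_{L'}$ gives $\pi_{L'}(v-e_B+\mathbf{1})=\pi_{L'}(v+\mathbf{1})-e_{B\cap L'}$. Splitting $B=B_1\sqcup B_2$ with $B_1\subseteq L'$ and $B_2\subseteq L\setminus L'$, the factor $\sum_{B_2\subseteq L\setminus L'}(-1)^{|B_2|}$ kills every sublink $L'\subsetneq L$, leaving only $L'=L$. A second swap handles the remainder: for fixed $u$, the set of $B$ with $u\succeq v+\mathbf{1}-e_B$ is the boolean interval determined by $\{i:u_i=v_i\}$ and $\{i:u_i\geq v_i+1\}$, and $\sum(-1)^{|B|-1}$ over that interval vanishes unless it degenerates to a singleton, which occurs exactly at $u=v$. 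The two factors of $(-1)^{r-1}$ cancel, yielding $\chi_{L,v}$.

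The main obstacle is the two-layer sign cancellation in (a) --- first collapsing the sublink sum to $L'=L$ via the alternating sum over $B_2\subseteq L\setminus L'$, then collapsing the remaining inclusion-exclusion to $u=v$ --- together with keeping track of the affine $\pi_{L'}$-shifts when translating between the ambient links $L$ and $L\setminus L_j$ in (b). The combinatorics is routine, but the sign bookkeeping is what makes the identity come out correctly.
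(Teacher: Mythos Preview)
Your proposal is correct and matches the approach the paper itself indicates. Note that the paper does not actually prove Theorem~\ref{th:invertion}: it is quoted from \cite{gn}, and the only argument given in the present paper is the sentence immediately preceding the statement, namely that \eqref{chi from h} together with the boundary conditions \eqref{projection for h} has a unique solution. Your proof is precisely a fleshing-out of that sentence: you check that the right-hand side of \eqref{invertion} satisfies \eqref{chi from h} (your part (a)) and the boundary conditions \eqref{projection for h} (your part (b), via induction on $r$), and then invoke uniqueness. The details you supply --- the two-stage alternating-sum collapse in (a), the compatibility $\pi_{L'}^{L}(v+\mathbf{1}_L)=\pi_{L'}^{L\setminus L_j}(\pi_j(v)+\mathbf{1}_{L\setminus L_j})$ in (b), and the downward-induction uniqueness argument --- are all sound.
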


Given an L-space link, we construct a spectral sequence whose $E_2$ page can be computed from the multi-variable 
Alexander polynomial by an explicit combinatorial procedure, and whose $E_{\infty}$ page coincides with the group $\HFL$.
The complex \eqref{def of HFL} is quasi-isomorphic to the iterated cone: 
$$
\mathcal{K}(v)=\bigoplus_{B\subset \{1,\ldots,r\}}A^{-}(L,v-e_B),
$$
where the differential consists of two parts: the first acts in each summand and the second acts by inclusion maps between summands.
There is a spectral sequence naturally associated to this construction. Its $E_1$ term equals 
$$
E_1(v)=\bigoplus_{B\subset \{1\ldots,r\}}H_{*}(A^{-}(L,v-e_B))=\bigoplus_{B\subset \{1\ldots,r\}}\F[U]\langle z(v-e_B)\rangle,
$$
where $z(u)$ is the generator of $H_{*}(A^{-}(L,u))$ of degree $-2h_L(u)$.
The next differential $\partial_1$ is induced by inclusions and reads as:
\begin{equation}
\label{def d1}
\partial_1(z(v-e_B))=\sum_{i\in B} U^{h(v-e_B)-h(v-e_{B-i})}z(v-e_B+e_i).
\end{equation}
We obtain the following result.

\begin{theorem}[\cite{gn}]
\label{spectral}
Let $L$ be an L-space link with $r$ components and let $h_L(v)$ be the corresponding $h$-function.
Then there is a spectral sequence with $E_2(v)=H_{*}(E_1,\partial_1)$ and $E_{\infty}\simeq \HFL(L,v)$.
\end{theorem}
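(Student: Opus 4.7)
The first move is to reformulate the definition of $\HFL(L,v)$ in a way that fits a cube. Because $A^-(L;u)\subseteq A^-(L;u')$ whenever $u\preceq u'$, for any $u\prec v$ we have $u\preceq v-e_i$ for at least one coordinate $i$, so
\[
\sum_{u\prec v}A^-(L;u)\;=\;\sum_{i=1}^r A^-(L;v-e_i).
\]
Thus $\HFL(L,v)=H_*\bigl(A^-(L;v)/\sum_{i=1}^r A^-(L;v-e_i)\bigr)$. The plan is to replace this quotient by the iterated cone $\mathcal{K}(v)=\bigoplus_{B\subset\{1,\dots,r\}}A^-(L;v-e_B)$, with total differential equal to the sum of the internal differentials on each cube vertex and the inclusion maps $A^-(L;v-e_B)\hookrightarrow A^-(L;v-e_{B-i})$ (signs irrelevant over $\F$) along each edge of the cube.

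Next, I would show that the natural projection $\mathcal{K}(v)\to A^-(L;v)/\sum_i A^-(L;v-e_i)$ (sending the $B=\emptyset$ summand to the quotient and all other summands to zero) is a quasi-isomorphism. The essential ingredient is the compatibility of the subcomplexes with intersections: because the Alexander filtration is defined coordinatewise, for any $B,B'\subset\{1,\dots,r\}$ one has $A^-(L;v-e_B)\cap A^-(L;v-e_{B'})=A^-(L;v-e_{B\cup B'})$. With this identification, the augmented cube
\[
0\to\bigoplus_{|B|=r}A^-(L;v-e_B)\to\cdots\to\bigoplus_{|B|=1}A^-(L;v-e_B)\to A^-(L;v)\to A^-(L;v)\Big/\sum_i A^-(L;v-e_i)\to 0
\]
is exact by a standard Mayer--Vietoris / inclusion--exclusion argument, which gives the desired quasi-isomorphism.

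Finally, I would run the spectral sequence of the cube filtration by $|B|$ on $\mathcal{K}(v)$. On the $E_0$ page only the internal differentials survive, and since $L$ is an L-space link the large surgery theorem gives
\[
H_*\bigl(A^-(L;v-e_B)\bigr)\;\simeq\;\F[U]\langle z(v-e_B)\rangle,\qquad\deg z(v-e_B)=-2h_L(v-e_B),
\]
yielding the claimed $E_1$ page. The differential $\partial_1$ is induced by the inclusion on homology: composing the inclusion $A^-(L;v-e_B)\hookrightarrow A^-(L;v-e_{B-i})$ with $h_{v-e_{B-i}}\colon A^-(L;v-e_{B-i})\hookrightarrow A^-(S^3)$ agrees with $h_{v-e_B}$, which acts on the generator as $U^{h_L(v-e_B)}$. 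Since the codomain generator maps to $U^{h_L(v-e_{B-i})}$, the induced map sends $z(v-e_B)$ to $U^{h_L(v-e_B)-h_L(v-e_{B-i})}z(v-e_{B-i})$, reproducing \eqref{def d1}. Convergence $E_\infty\simeq\HFL(L,v)$ is then immediate from the quasi-isomorphism of the previous paragraph combined with the fact that the filtration is finite.

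The main obstacle is the cube quasi-isomorphism: one must verify that the iterated cone really does model the associated graded of the filtration by subcomplexes $A^-(L;u)$. Once the intersection identity $A^-(L;v-e_B)\cap A^-(L;v-e_{B'})=A^-(L;v-e_{B\cup B'})$ is in hand, the rest reduces to familiar homological algebra and to invoking the L-space property to pin down $E_1$ and $\partial_1$.
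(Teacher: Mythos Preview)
Your proposal is correct and matches the paper's approach. The paper does not actually give a proof of this theorem: it is cited from \cite{gn}, and the surrounding text merely sketches the construction (the iterated cone $\mathcal{K}(v)$, its quasi-isomorphism with the associated graded complex, the cube filtration, and the identification of $E_1$ and $\partial_1$ via the L-space hypothesis). Your write-up fills in exactly those details --- the reduction of $\sum_{u\prec v}A^-(L;u)$ to $\sum_i A^-(L;v-e_i)$, the Mayer--Vietoris/inclusion--exclusion argument using $A^-(L;v-e_B)\cap A^-(L;v-e_{B'})=A^-(L;v-e_{B\cup B'})$, and the computation of the induced edge maps --- in the manner the paper's sketch anticipates.
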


\begin{remark}
\label{rem:degrees}
Let us write more precisely the bigrading on the $E_2$ page. The $E_1$ page is naturally bigraded as follows: a generator
$U^{m}z(v-e_B)$ has {\em cube degree} $|B|$ and its homological degree {\em in $A^{-}(L,v-e_B)$} equals $-2m-2h(v-e_B)$.
In short, we will write
$$
\bideg\left(U^{m}z(v-e_B)\right)=(|B|,-2m-2h(v-e_B)).
$$
The homological degree of the same generator in $E_1(v)$ equals the sum of these two degrees. The differential $\partial_1$
has bidegree $(-1,0)$, and, more generally, the differential $\partial_k$ in the spectral sequence has bidegree $(-k,k-1)$. 
\end{remark}

In the next section we will compute the $E_2$ page for cable L-space links and show that $E_2=E_{\infty}$.
Let us discuss the action of the operators $U_i$ on the $E_2$ page. Recall that $U_i$ maps $A^{-}(L,v)$ to $A^{-}(L,v-e_i)$,
and in homology one has:
\begin{equation}
\label{U on generators}
U_iz(v)=U^{1-h(v-e_i)+h(v)}z(v-e_i).
\end{equation}
Since $U_i$ commutes with the inclusions of various $A^{-}$, we get the following result.

\begin{proposition}
\label{U on HFL}
Equation \eqref{U on generators} defines a chain map from $\mathcal{K}(v)$ to $\mathcal{K}(v-e_i)$ commuting with the differential $\partial_1$,
so we have a well-defined combinatorial map 
$$U_i:H_{*}(E_1(v),\partial_1)\to H_{*}(E_1(v-e_i),\partial_1).$$ 
If $E_2=E_{\infty}$ then
one obtains $U_i:\HFL(L,v)\to \HFL(L,v-e_i)$.
\end{proposition}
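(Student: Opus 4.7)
The plan is to construct $U_i$ at the chain level of $\mathcal{K}$, check that the induced map on the $E_1$ page is the formula \eqref{U on generators} and commutes with $\partial_1$, and then explain why the hypothesis $E_2=E_\infty$ is needed to identify this combinatorial map with the genuine $U_i$-action on $\HFL$.

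At the chain level, multiplication by $U_i$ is a chain endomorphism of the full Heegaard--Floer complex that shifts the Alexander filtration by $-e_i$; in particular for every $w\in\HH(L)$ it restricts to a chain map $A^-(L,w)\to A^-(L,w-e_i)$. The cube part of the differential on $\mathcal{K}(v)$ is a sum of inclusions $A^-(L,v-e_B)\hookrightarrow A^-(L,v-e_{B-j})$ of $\F[U_1,\ldots,U_r]$-submodules, which commute with $U_i$ tautologically. Together with the commutativity of $U_i$ and the internal differentials on each $A^-$, this shows that $U_i$ assembles into a chain map $\mathcal{K}(v)\to\mathcal{K}(v-e_i)$.

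On the $E_1$ page each summand $H_*(A^-(L,w))\simeq\F[U]\langle z(w)\rangle$ has $z(w)$ of homological degree $-2h_L(w)$; since $U_i$ lowers homological degree by $2$, the only possibility is $z(w)\mapsto U^{1+h_L(w)-h_L(w-e_i)}z(w-e_i)$, which is exactly \eqref{U on generators}. Commutativity with $\partial_1$ is forced by the chain-level statement, and can also be verified by hand: a direct calculation using \eqref{def d1} and \eqref{U on generators} shows that both $U_i\partial_1 z(v-e_B)$ and $\partial_1 U_i z(v-e_B)$ equal
$$\sum_{j\in B}U^{\,1+h(v-e_B)-h(v-e_{B-j}-e_i)}\,z(v-e_{B-j}-e_i),$$
since the intermediate $h$-value ($h(v-e_{B-j})$ in one ordering, $h(v-e_B-e_i)$ in the other) cancels in a two-term telescope. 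Hence $U_i$ descends to a well-defined map $H_*(E_1(v),\partial_1)\to H_*(E_1(v-e_i),\partial_1)$, which is the combinatorial map on $E_2$.

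Finally, because $U_i$ is a chain map on $\mathcal{K}$, it induces a map on every page of the associated spectral sequence as well as on $H_*(\mathcal{K}(v))=\HFL(L,v)$. When $E_2=E_\infty$, no higher differentials intervene, and the two induced maps coincide, yielding the desired map $U_i\colon\HFL(L,v)\to\HFL(L,v-e_i)$. The main obstacle, such as it is, lies in the telescoping identity on $E_1$; but this is essentially tautological once one observes that $h_L$ records the homological degrees of the generators $z(w)$ and that both inclusions and $U_i$ shift these degrees in mutually compatible ways.
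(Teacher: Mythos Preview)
Your proof is correct and follows the same approach as the paper, which offers only the one-line justification ``Since $U_i$ commutes with the inclusions of various $A^{-}$, we get the following result'' immediately before the proposition. You have simply filled in the details the paper omits: the degree computation forcing \eqref{U on generators} on $E_1$, and the telescoping check that $U_i$ and $\partial_1$ commute there.
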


Furthermore, by the definition of $\HFLhat$ \cite[Section 4]{OSlinks} one gets:
$$
\HFLhat(L,v)=H_{*}\left(A^{-}(L,v)/\left[\sum_{i=1}^{r}A^{-}(v-e_i)\oplus \sum_{i=1}^{r}U_iA^{-}(v+e_i)\right]\right).
$$ 
This implies the following result:

\begin{proposition}
\label{spectral for hat}
There is a spectral sequence with $E_1$ page
$$
\widehat{E}_1=\bigoplus_{B\subset \{1,\ldots,r\}}\HFL(L,v+e_B)
$$ 
and converging to $\widehat{E}_{\infty}=\HFLhat(L,v)$. The differential $\widehat{\partial}_1$ is given 
by the action of $U_i$ induced by \eqref{U on generators}.
\end{proposition}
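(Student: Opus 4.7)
The plan is to realize $\HFLhat(L,v)$ as the homology of an $r$-dimensional cube of chain complexes, in complete analogy with the construction of $\mathcal{K}(v)$ used in Theorem \ref{spectral}, and then read off the desired spectral sequence from the cube filtration. Concretely, I would set up a cube complex
$$\widehat{\mathcal{K}}(v) = \bigoplus_{B\subset \{1,\ldots,r\}} A^{-}(L, v+e_B)\Big/\sum_{j} A^{-}(L, v+e_B-e_j),$$
where the vertex indexed by $B$ is placed in cube degree $|B|$, has as its internal differential the differential of the associated graded complex at $v+e_B$, and has edge maps from vertex $B$ to vertex $B\setminus\{i\}$ (for $i\in B$) given by the operator $U_i$. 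Since $U_i$ shifts the $i$-th Alexander grading by $-1$, it sends $A^{-}(L, v+e_B)$ into $A^{-}(L, v+e_{B\setminus\{i\}})$ and descends to a chain map between the associated graded pieces.

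The central claim, and the main technical step, is that the total homology of $\widehat{\mathcal{K}}(v)$ equals $\HFLhat(L,v)$. This is a Koszul-type identity: $\widehat{\mathcal{K}}(v)$ is precisely the complex obtained by tensoring the minus link Floer complex of $L$ with the Koszul resolution of $\F$ over $\F[U_1,\ldots,U_r]$ and restricting to Alexander multi-grading $v$. Together with the explicit description of $\HFLhat$ as the homology of $A^{-}(L,v)\big/\bigl[\sum_i A^{-}(v-e_i) + \sum_i U_i A^{-}(v+e_i)\bigr]$ recalled just before the proposition, this identifies the total homology with $\HFLhat(L,v)$. Once that is in hand, I would filter $\widehat{\mathcal{K}}(v)$ by cube degree $|B|$: the $E_0$ differential is the internal differential on each vertex, giving
$$\widehat{E}_1 = \bigoplus_{B\subset \{1,\ldots,r\}} \HFL(L, v+e_B),$$
and $\widehat{\partial}_1$ is induced by the edge maps $U_i$, which on homology recover the action described in Proposition \ref{U on HFL} and equation \eqref{U on generators}. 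Since the cube is finite, the spectral sequence collapses after finitely many pages to the total homology, which is $\HFLhat(L,v)$.

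The main obstacle is the Koszul identification in the previous paragraph. One must verify that the Koszul complex on $(U_1,\ldots,U_r)$ genuinely computes the derived quotient of the minus link Floer complex by all the $U_i$ in each fixed Alexander multi-grading, and that this is compatible with the associated graded passage that turns $A^{-}(L,v)$ into the $B=\emptyset$ vertex of the cube. Getting the Alexander bidegrees right, so that the Koszul differential is homogeneous and one lands in the $v$-graded piece of $\HFLhat$ rather than some shift, is the only subtle point; everything else is parallel to the derivation of Theorem \ref{spectral} from the analogous iterated cone.
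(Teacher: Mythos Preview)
Your approach is correct and is essentially what the paper has in mind; indeed, the paper gives no proof at all beyond the phrase ``This implies the following result'' after the displayed formula for $\HFLhat(L,v)$, so you are supplying the details of the implicit iterated-cone/Koszul argument. The one point you flag as the ``main obstacle'' is exactly the one thing that needs to be checked: the cube $\widehat{\mathcal{K}}(v)$ a priori computes the \emph{derived} quotient by the $U_i$, whereas the paper's formula for $\HFLhat$ uses the \emph{ordinary} quotient. These agree here because each $U_i$ is injective on the associated graded pieces at the chain level: $\text{gr}(v+e_i)$ is spanned by Heegaard generators $\mathbf{x}$ with $A(\mathbf{x})\succeq v+e_i$, and $U_i$ simply includes these into the larger set of generators of $\text{gr}(v)$. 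Equivalently, the associated graded complex $\bigoplus_w \text{gr}(w)$ is free over $\F[U_1,\ldots,U_r]$, so the Koszul complex tensored with it has no higher Tor and collapses to the naive quotient. Once you note this, everything else in your outline goes through verbatim.
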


\section{Heegaard-Floer homology for cable links}

\subsection{The Alexander polynomial and \texorpdfstring{$h$}{h}--function}

The Alexander polynomial of cable knots and links is given by the following well-known formula:
\begin{equation}
\label{alexander cabling}
\Delta_{K_{rm,rn}}(t_1,\ldots,t_r)=\Delta_K(t_1^{m}\cdots t_r^{m})\cdot \Delta_{T(rm,rn)}(t_1,\ldots,t_r),
\end{equation}
where $T(rm,rn)$ denotes the $(rm,rn)$ torus link. Throughout, let $\t=t_1\cdots t_r$ and $l=mn$.
\begin{lemma}
The generating functions for the Euler characteristics of $\HFL$ for $K_{rm,rn}$ and $K_{m,n}$ are related by the following equation:
\begin{equation}
\label{cable links vs knots}
\chi_{K_{rm,rn}}(t_1,\ldots,t_r)=\chi_{K_{m,n}}(\t)\cdot (\t^{l/2}-\t^{-l/2})^{r-1}.
\end{equation}
\end{lemma}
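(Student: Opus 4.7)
The plan is to translate everything to Alexander polynomials and then verify the resulting identity by direct computation. Concretely, I will use the relations stated just before the lemma, namely $\chi_L(t_1,\ldots,t_r) = \t^{1/2}\Delta_L(t_1,\ldots,t_r)$ for an $r$-component link with $r\ge 2$, and $\chi_K(t) = \Delta_K(t)/(1-t^{-1})$ for a knot. Applying these to the two sides of \eqref{cable links vs knots} converts the identity we wish to prove into an identity about Alexander polynomials.

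Next I will use the cabling formula \eqref{alexander cabling} on the left-hand side, and the analogous (single-variable) cabling formula $\Delta_{K_{m,n}}(\t) = \Delta_K(\t^m)\cdot \Delta_{T(m,n)}(\t)$ on the right-hand side. In both expressions the factor $\Delta_K(\t^m)$ appears and cancels. What remains to verify is the purely torus-link identity
$$
\t^{1/2}\,\Delta_{T(rm,rn)}(t_1,\ldots,t_r) \;=\; \frac{\Delta_{T(m,n)}(\t)}{1-\t^{-1}}\,(\t^{l/2}-\t^{-l/2})^{r-1},
$$
which, using the classical torus-knot formula $\Delta_{T(m,n)}(\t) = \dfrac{(\t^{mn/2}-\t^{-mn/2})(\t^{1/2}-\t^{-1/2})}{(\t^{m/2}-\t^{-m/2})(\t^{n/2}-\t^{-n/2})}$ and the identity $\t^{1/2}(1-\t^{-1}) = \t^{1/2}-\t^{-1/2}$, reduces (after routine cancellation, noting $l=mn$) to the single identity
$$
\Delta_{T(rm,rn)}(t_1,\ldots,t_r) \;=\; \frac{(\t^{mn/2}-\t^{-mn/2})^{r}}{(\t^{m/2}-\t^{-m/2})(\t^{n/2}-\t^{-n/2})}.
$$

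The one nontrivial ingredient, and the main obstacle, is the formula for the multivariable Alexander polynomial of the torus link $T(rm,rn)$ displayed above. This is classical: it follows, for example, from the characteristic polynomial of the monodromy of the Milnor fibration of the plane-curve singularity $z_1^{rm}+z_2^{rn}=0$ at the origin, or by a direct Seifert-matrix computation on the canonical fiber surface of $T(rm,rn)$. One natural sanity check is the Hopf link case $r=2$, $m=n=1$: the formula collapses to $\Delta_{T(2,2)}(t_1,t_2)=1$, giving $\chi_{T(2,2)}(t_1,t_2)=t_1^{1/2}t_2^{1/2}$, in agreement with the convention recorded in the remark preceding the lemma.

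Once the torus-link formula is in hand, all the remaining steps are straightforward algebraic manipulations in the ring $\Z[t_1^{\pm 1/2},\ldots,t_r^{\pm 1/2}]$; the full proof will simply be to write out these substitutions and collect the factors.
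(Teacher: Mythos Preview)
Your proposal is correct and follows essentially the same route as the paper: both proofs reduce the identity, via the cabling formula \eqref{alexander cabling} and the relations between $\chi$ and $\Delta$, to the classical formula for the multivariable Alexander polynomial of the torus link $T(rm,rn)$. You simply spell out in more detail the algebraic cancellations that the paper leaves implicit.
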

\begin{proof}
The statement follows from the identity \eqref{alexander cabling} and the expression for the Alexander polynomials of torus links:
$$
\chi_{T(rm,rn)}(t_1,\ldots,t_r)=\frac{(\t^{mn/2}-\t^{-mn/2})^{r}}{(\t^{m/2}-\t^{-m/2})(\t^{n/2}-\t^{-n/2})}.
$$
\end{proof}

\begin{remark}
The Alexander polynomial is determined up to a sign. By \eqref{cable links vs knots}, the multivariable Alexander polynomial of a cable link
is supported on the diagonal, so one can fix the sign by requiring its top coefficient to be positive.
\end{remark}

From now on we will assume that $K$ is an L-space knot and $n/m\ge 2g(K)-1$, so $K_{rm,rn}$ is an L-space link for all $r$. To simplify notation, we define 
$h_{rm,rn}(v)=h_{K_{rm,rn}}(v)$ and  $\chi_{rm,rn}(v)=\chi_{K_{rm,rn},v}.$
Let $c=l(r-1)/2$.

\begin{theorem}
Suppose that $v_1\le v_2\le \ldots \le v_r$. Then the following equation holds:
\begin{equation}
\label{h for link}
h_{rm,rn}(v_1,\ldots,v_r)=h_{m,n}(v_1-c)+h_{m,n}(v_2-c+l)+\ldots+h_{m,n}(v_r-c+(r-1)l).
\end{equation}
\end{theorem}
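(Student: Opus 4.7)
The plan is to apply Theorem~\ref{th:invertion} (the inversion formula) directly and exploit the diagonal support of the Alexander polynomials of all cable sublinks. By \eqref{cable links vs knots}, for every sublink $L'=K_{r'm,r'n}$ one has
$$\chi_{K_{r'm,r'n}}(t_1,\ldots,t_{r'})=\chi_{m,n}(\t')\,(\t'^{l/2}-\t'^{-l/2})^{r'-1},\qquad \t'=t_1\cdots t_{r'},$$
which depends only on $\t'$. Hence the coefficients $\chi_{L',u}$ vanish unless $u=(a,\ldots,a)$ is a constant vector. Consequently the inner sum $\sum_{u\succeq\pi_{L'}(v+\mathbf{1})}\chi_{L',u}$ appearing in \eqref{invertion} collapses to a one-variable tail $\sum_{a\ge v_{\max(S)}+1-(r-r')l/2}\chi_{L',(a,\ldots,a)}$, where $S\subseteq\{1,\ldots,r\}$ is the index set of $L'$; here the hypothesis $v_1\le\cdots\le v_r$ is used to identify $\max_{i\in S}\pi_{L'}(v+\mathbf{1})_i$ with $v_{\max(S)}+1-(r-r')l/2$.

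Next I would group sublinks by $k:=\max(S)$. Since there are $\binom{k-1}{j}$ sublinks of size $j+1$ having $\max(S)=k$, the inversion formula becomes
$$h_{rm,rn}(v)=\sum_{k=1}^{r}\sum_{j=0}^{k-1}(-1)^{j}\binom{k-1}{j}H_{j+1}\!\left(v_k+1-\tfrac{(r-j-1)l}{2}\right),$$
where $H_{j+1}(M):=\sum_{a\ge M}\chi_{K_{(j+1)m,(j+1)n},(a,\ldots,a)}$. Expanding $(\t'^{l/2}-\t'^{-l/2})^{j}$ binomially and applying the knot-level identity $\sum_{b\ge B}\chi_{m,n,b}=h_{m,n}(B-1)$ (the $r=1$ instance of \eqref{invertion}) yields
$$H_{j+1}\!\left(v_k+1-\tfrac{(r-j-1)l}{2}\right)=\sum_{i=0}^{j}(-1)^{i}\binom{j}{i}h_{m,n}\!\left(v_k-\tfrac{(r-1-2i)l}{2}\right),$$
after checking that the shifts $(r-j-1)l/2+l(j-2i)/2$ collapse to $l(r-1-2i)/2$ independently of $j$.

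Substituting this back, the coefficient of $h_{m,n}(v_k-l(r-1-2i)/2)$ in the resulting double sum becomes $(-1)^{i}\sum_{j=i}^{k-1}(-1)^{j}\binom{k-1}{j}\binom{j}{i}$. Using the absorption identity $\binom{k-1}{j}\binom{j}{i}=\binom{k-1}{i}\binom{k-1-i}{j-i}$ and telescoping $\sum_{s=0}^{k-1-i}(-1)^{s}\binom{k-1-i}{s}$ shows that this coefficient vanishes unless $i=k-1$, in which case it equals $1$. Since $v_k-l(r-1-2(k-1))/2=v_k-c+(k-1)l$, the inner sum for each fixed $k$ equals $h_{m,n}(v_k-c+(k-1)l)$, and summing over $k$ produces the right-hand side of \eqref{h for link}.

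The main obstacle is bookkeeping of the various half-integer shifts coming from the affine lattice $\HH(L')=\Z^{r'}+\tfrac{(r'-1)l}{2}\mathbf{1}$, from the projection map $\pi_{L'}$, and from the symmetrization factor $(\t'^{l/2}-\t'^{-l/2})^{r'-1}$; one must check at the outset that all the arguments of $\chi$ and $h$ lie in the appropriate lattices so that the algebraic manipulations are meaningful. Once this is set up, the substantive content of the argument is the classical binomial identity above, so no further topological input beyond Theorem~\ref{th:invertion} and \eqref{cable links vs knots} enters.
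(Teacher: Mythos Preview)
Your proposal is correct and follows essentially the same strategy as the paper: both apply the inversion formula \eqref{invertion}, use the diagonal support of $\chi_{L'}$ coming from \eqref{cable links vs knots}, group sublinks by the position of their largest index, and reduce to a binomial identity. You are actually more explicit than the paper at the final step: where the paper just says ``one can check that the inner sum vanishes unless $j=i-1$,'' you carry out the absorption-and-telescoping argument in full, and your grouping by $k=\max(S)$ avoids the mild ambiguity in the paper's phrase ``$v_i=\max(v_{L'})$'' when some of the $v_i$ coincide.
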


\begin{proof}
We will use Theorem \ref{th:invertion} to compute $h(v)$. Let $L'$ be a sublink of $K_{rm,rn}$ with $r'$ components, i.e., $L'=K_{r'm,r'n}$.
By \eqref{cable links vs knots}, one has
$$
\chi_{K_{r'm,r'n}}(t_1,\ldots,t_{r'})=\chi_{K_{m,n}}(\t)\cdot \t^{l(r'-1)/2}\sum_{j=0}^{r'-1}(-1)^{j}\binom{r'-1}{j}\t^{-lj},
$$
hence $\chi_{L',u}$ does not vanish only if $u=(s,\ldots,s)$, and 
$$
\chi_{L',s,\ldots,s}=\sum_{j=0}^{r'-1}(-1)^{j}\binom{r'-1}{j}\chi_{m,n}(s-l(r'-1)/2+lj).
$$
Therefore
\begin{align*}
\sum_{u\succeq \pi_{L'}(v+\mathbf{1})}\chi_{L',u} &=\sum_{s>\max(\pi_{L'}(v))}\sum_{j=0}^{r'-1}(-1)^{j}\binom{r'-1}{j}\chi_{m,n}(s-l(r'-1)/2+lj) \\
	&=\sum_{j=0}^{r'-1}(-1)^{j}\binom{r'-1}{j}h_{m,n}(\max(\pi_{L'}(v))-l(r'-1)/2+lj).
\end{align*}
Furthermore, if $L'=L_{i_1}\cup\ldots\cup L_{i_{r'}}$ then $\pi_{L'}(v)=(v_{i_1}-l(r-r')/2,\ldots,v_{i_{r'}}-l(r-r')/2)$, so
$$
\max(\pi_{L'}(v))=\max(v_{i_1},\ldots,v_{i_r'})-l(r-r')/2=\max(v_{L'})-l(r-r')/2.
$$
This means that \eqref{invertion} can be rewritten as follows:
\begin{align*}
h_{rm,rn}(v_1,\ldots,v_r)&=\sum_{L', j}(-1)^{r'-1+j}\binom{r'-1}{j}h_{m,n}(\max(v_{L'})-l(r-1)/2+lj) \\
&=\sum_{i,j}h_{m,n}(v_i-l(r-1)/2+lj)\sum_{L':v_i=\max(v_{L'})}(-1)^{r'-1+j}\binom{r'-1}{j}.
\end{align*}
One can check that the inner sum vanishes unless $j=i-1$ (recall that $v_1\le v_2\le \ldots \le v_r$), so one gets
$$
h_{rm,rn}(v_1,\ldots,v_r)=\sum_{i}h_{m,n}(v_i-l(r-1)/2+l(i-1)).
$$
\end{proof}





\begin{lemma}
\label{symmetry for h}
The following identity holds:
$$
h_{rm,rn}(-v_1,\ldots,-v_r)=h_{rm,rn}(v_1,\ldots,v_r)+(v_1+\ldots+v_r).
$$
\end{lemma}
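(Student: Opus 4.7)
The plan is to reduce the link-level symmetry to the knot-level symmetry $h_{m,n}(-s) = h_{m,n}(s) + s$ via the explicit decomposition \eqref{h for link}. The knot symmetry is classical: for an L-space knot in $S^3$, the identity $h(v) = \sum_{u \ge v+1}\chi_{K,u}$ combined with $\chi_{K,-u} = \chi_{K,u}$ (symmetry of the Alexander polynomial) yields $h_{m,n}(-s) = h_{m,n}(s) + s$.

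First I would observe that $h_{rm,rn}$ is invariant under permutations of its arguments, since the $r$ components of $K_{rm,rn}$ are pairwise isotopic with identical pairwise linking numbers, so permuting components extends to a self-homeomorphism of the link complement. Hence without loss of generality $v_1\le\cdots\le v_r$, and then $-v_r\le\cdots\le -v_1$. Applying \eqref{h for link} to the reordered left-hand side gives
\[
h_{rm,rn}(-v_1,\ldots,-v_r) = h_{rm,rn}(-v_r,\ldots,-v_1) = \sum_{i=1}^{r} h_{m,n}\bigl(-v_{r-i+1} - c + (i-1)l\bigr).
\]

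Next, I would reindex by $j = r-i+1$ and use the arithmetic identity $-c + (r-j)l = c - (j-1)l$, which follows from $2c = (r-1)l$. This rewrites the $j$-th summand as $h_{m,n}\bigl(-(v_j - c + (j-1)l)\bigr)$. Applying the knot-level symmetry term by term produces
\[
h_{rm,rn}(-v_1,\ldots,-v_r) = \sum_{j=1}^{r} \bigl[h_{m,n}(v_j - c + (j-1)l) + (v_j - c + (j-1)l)\bigr].
\]
By \eqref{h for link}, the first piece reassembles to $h_{rm,rn}(v_1,\ldots,v_r)$, while the correction simplifies to
\[
\sum_{j=1}^{r} (v_j - c + (j-1)l) = \sum_{j=1}^{r} v_j - rc + l\cdot\tfrac{r(r-1)}{2} = \sum_{j=1}^{r} v_j,
\]
since $rc = rl(r-1)/2$.

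The main obstacle is essentially only to invoke the knot-case symmetry, which is standard, and to verify that the order-reversal in coordinates indeed produces arguments of the form $-(v_j - c + (j-1)l)$; the rest is arithmetic bookkeeping.
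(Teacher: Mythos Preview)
Your argument is essentially identical to the paper's: order the coordinates, apply \eqref{h for link} to both sides, reindex using $-c+(r-j)l=c-(j-1)l$, invoke the knot-level symmetry $h_{m,n}(-x)=h_{m,n}(x)+x$, and observe that the shifts $-c+(j-1)l$ sum to zero. One small caveat: your one-line justification of the knot symmetry via ``$\chi_{K,-u}=\chi_{K,u}$'' is not literally correct in the paper's notation (the $\HFK^-$ Euler characteristics $\chi_{K,u}$ are not symmetric; it is the Alexander coefficients $a_u$ that satisfy $a_u=a_{-u}$), though the conclusion $h_{m,n}(-s)=h_{m,n}(s)+s$ is standard and is simply cited in the paper.
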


\begin{proof}
Suppose that $v_1\le v_2\le\ldots\le v_r$. Then $-v_1\ge -v_2\ge\ldots\ge -v_r$.  Therefore
\begin{align*}
h_{rm,rn}(-v_1,\ldots,-v_r) &=\sum_{i=1}^{r}h_{m,n}(-v_i-l(r-1)/2+l(r-i))\\
	&=\sum_{i=1}^{r}h_{m,n}(-v_i+l(r-1)/2-l(i-1)).
\end{align*}
It is known (e.g., \cite{hlz}) that for all $x$, 
$$
h_{m,n}(-x)=h_{m,n}(x)+x,
$$
hence
$$
h_{m,n}(-v_i+l(r-1)/2-l(i-1))=h_{m,n}(v_i-l(r-1)/2+l(i-1))+(v_i-l(r-1)/2+l(i-1)).
$$
Finally, $\sum_{i=1}^{r}(-l(r-1)/2+l(i-1))=0$.
\end{proof} 

\begin{lemma}
\label{h on diagonal}
One has $h_{rm,rn}(k,k\ldots,k)=\hh(k)$, where $\hh(k)$ is defined by \eqref{def hh}.
\end{lemma}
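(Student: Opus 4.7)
The plan is to reduce the claim to a generating function identity. Evaluating equation \eqref{h for link} at $v_1 = \cdots = v_r = k$ (which is trivially in the ordered region) gives
$$h_{rm,rn}(k,\ldots,k) = \sum_{i=0}^{r-1} h_{m,n}(k - c + il),$$
where $c = l(r-1)/2$ and $l = mn$. It therefore suffices to compute the generating function of this sum in $k$ and compare with the right-hand side of \eqref{def hh}.

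I will work in the ring $\F((t^{-1}))$ of formal Laurent series in $t^{-1}$, in which $(1-t^{-1})^{-1} = 1 + t^{-1} + t^{-2} + \cdots$ is well-defined. Because $h_{m,n}(j) = 0$ for all sufficiently large $j$, the series $F(t) := \sum_{j \in \Z} h_{m,n}(j) t^j$ lies in this ring. The identity $\chi_{m,n}(v) = h_{m,n}(v-1) - h_{m,n}(v)$ telescopes to $(1 - t^{-1}) F(t) = t^{-1} \chi_{K_{m,n}}(t)$, and combined with the knot formula $\chi_{K_{m,n}}(t) = \Delta_{m,n}(t)/(1-t^{-1})$ this yields $F(t) = t^{-1} \Delta_{m,n}(t)/(1-t^{-1})^2$. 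Summing the geometric series, one finds
$$\sum_{i=0}^{r-1} t^{-c + il} = \frac{t^{mnr/2} - t^{-mnr/2}}{t^{mn/2} - t^{-mn/2}}.$$

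Multiplying $F(t)$ by this Laurent polynomial term by term (so no convergence issue arises), the coefficient of $t^k$ on the left equals $\sum_{i=0}^{r-1} h_{m,n}(k - c + il) = h_{rm,rn}(k,\ldots,k)$, while the product on the right is by construction $\sum_k \hh(k) t^k$. Matching coefficients proves the lemma. The only point requiring care is simply to check that all formal manipulations are valid in $\F((t^{-1}))$; since we only invert $(1-t^{-1})$ and multiply by Laurent polynomials, this is routine.
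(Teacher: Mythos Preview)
Your argument is essentially the same as the paper's: apply \eqref{h for link} on the diagonal, pass to generating functions, and identify $\sum_k h_{m,n}(k)t^k$ with $t^{-1}\Delta_{m,n}(t)/(1-t^{-1})^2$ times the Laurent polynomial $\sum_i t^{-c+il}=(t^{mnr/2}-t^{-mnr/2})/(t^{mn/2}-t^{-mn/2})$. One small slip: in this paper $\F$ denotes $\Z/2\Z$, so you should work in $\Z((t^{-1}))$ or $\Q((t^{-1}))$ rather than $\F((t^{-1}))$, since the $h$-function is $\Z$-valued and the identity must hold integrally.
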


\begin{proof}
Indeed, by \eqref{h for link} we have 
$$
h_{rm,rn}(k,\ldots,k)=h_{m,n}(k-l(r-1)/2)+h_{m,n}(k-l(r-1)/2+l)+\ldots+h_{m,n}(k+l(r-1)/2),
$$
so
$$
\sum_{k}h_{rm,rn}(k,\ldots,k)t^k=(t^{-l(r-1)/2}+\ldots+t^{l(r-1)/2})\sum_{k}h_{m,n}(k)t^k=\frac{(t^{lr/2}-t^{-lr/2})}{(t^{l/2}-t^{-l/2})}\cdot \frac{t^{-1}\Delta_{m,n}(t)}{(1-t^{-1})^2}.
$$
\end{proof}
 
For the rest of this section we will assume that $n/m>2g(K)-1$. 

\begin{lemma}
\label{HFL after l}
If $v\le g(K_{m,n})-l$, then $\HFK(K_{m,n},v)\simeq \F$. 
\end{lemma}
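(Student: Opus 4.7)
The plan is to combine two facts. First, by Theorem~\ref{thm:cableknot}, the hypothesis $n/m > 2g(K)-1$ implies that $K_{m,n}$ is itself an L-space knot, so for any Alexander grading $v$ there is a clean description of $\HFK(K_{m,n},v)$ in terms of the $h$-function. Second, I will show that the bound $v \le g(K_{m,n}) - l$ already places $v$ below every exponent of $\Delta_{K_{m,n}}$, which forces $\HFK(K_{m,n},v) \simeq \F$.

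For the first point, the large surgery theorem gives $H_*(A^-(K_{m,n}, v)) \simeq \F[U]$ with generator in degree $-2h(v)$, and the inclusion $A^-(K_{m,n}, v-1) \hookrightarrow A^-(K_{m,n}, v)$ induces multiplication by $U^{h(v-1)-h(v)}$ on homology. Passing to the associated long exact sequence (equivalently, specializing the spectral sequence of Theorem~\ref{spectral} to $r=1$, where it collapses at $E_2$), one obtains
\[
\HFK(K_{m,n}, v) \;\simeq\; \F[U]\,\big/\,U^{h(v-1)-h(v)}.
\]
Now $h(v-1)-h(v) = \chi_{K_{m,n},v}$, and the alternating $\pm 1$ structure of an L-space-knot Alexander polynomial forces the partial sum $\chi_{K,v} = \sum_{k \ge v} \Delta_{K,k}$ to take values in $\{0,1\}$. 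Hence $\HFK(K_{m,n},v) \simeq \F$ exactly when $\chi_{K_{m,n},v}=1$, and vanishes otherwise. In particular, for any $v \le -g(K_{m,n})$ the entire Alexander polynomial is included in the partial sum, so $\chi_{K_{m,n},v} = \Delta_{K_{m,n}}(1) = 1$ and $\HFK(K_{m,n},v) \simeq \F$.

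Thus the lemma reduces to checking $g(K_{m,n}) - l \le -g(K_{m,n})$, equivalently $2g(K_{m,n}) \le mn$. The cabling genus formula $g(K_{m,n}) = mg(K) + (m-1)(n-1)/2$ yields $mn - 2g(K_{m,n}) = m+n-1 - 2mg(K)$. Since $n > m(2g(K)-1)$ and $m,n$ are positive integers, we get $n \ge m(2g(K)-1)+1$, i.e.\ $m+n-1 \ge 2mg(K) \ge 0$, as required.

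The substantive ingredient is the structural description of $\HFK$ for L-space knots (following from \cite{OSlens} together with the large surgery theorem); the genus inequality at the end is a short numerical check and is the only place the hypothesis $n/m > 2g(K)-1$ enters in an essential way.
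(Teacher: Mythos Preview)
Your proof is correct and follows essentially the same approach as the paper: both observe that $K_{m,n}$ is an L-space knot, invoke the fact that $\HFK(K_{m,n},v)\simeq\F$ for $v\le -g(K_{m,n})$, and then verify the numerical inequality $2g(K_{m,n})\le mn$ via the cabling genus formula and the hypothesis $n/m>2g(K)-1$. The only difference is that the paper simply cites the fact about $\HFK$ below $-g$ as well-known, whereas you supply a short derivation from the $h$-function description; this is a harmless elaboration rather than a different route.
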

\begin{proof}
By \cite[Theorem 1.10]{HeddencablingII}, $K_{m,n}$ is an L-space knot and hence by \cite{OSlens}
$$g(K_{m,n})=\tau(K_{m,n}), \qquad g(K)=\tau(K).$$
By \cite{Shibuya}, we have:
$$g(K_{m,n})=mg(K)+\frac{(m-1)(n-1)}{2},$$
so for $n/m>2g(K)-1$ we have 
$$2g(K_{m,n})=2mg(K)+mn-m-n+1<mn+1,$$
hence $l=mn\ge 2g(K_{m,n})$. 
On the other hand, it is well-known that for $v\le -g(K_{m,n})$ one has $\HFK(K_{m,n},v))\simeq \F$. 
\end{proof} 

We will use the function $\beta$ defined by \eqref{def hh}.

\begin{lemma}
If $\beta(k)=-1$ then $\HFK(K_{m,n},k-c)=0$. Otherwise
\begin{equation}
\label{beta as max}
\beta(k)=\max\{j:0\le j\le r-1,\ \HFK(K_{m,n},k-c+lj)\simeq \F\}.
\end{equation}
\end{lemma}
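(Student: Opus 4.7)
The plan is to reduce $\beta(k)$ to a sum of one-variable Euler characteristics $\chi_{m,n}(v_{i})$ on the arithmetic progression $v_{i}:=k-c+li$, $i=0,\ldots,r-1$, and then use the L-space knot structure together with the separation $l\ge 2g(K_{m,n})$ to identify the stated maximum.

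First, combining Lemma \ref{h on diagonal} with the product formula \eqref{h for link} gives $\hh(k)=\sum_{i=0}^{r-1}h_{m,n}(v_{i})$ with $c=l(r-1)/2$. Subtracting the same expression for $\hh(k-1)$ and using the one-variable identity $\chi_{m,n}(v)=h_{m,n}(v-1)-h_{m,n}(v)$ yields
\[
\beta(k)+1=\hh(k-1)-\hh(k)=\sum_{i=0}^{r-1}\chi_{m,n}(v_{i}).
\]
Since $K_{m,n}$ is an L-space knot, its Alexander polynomial has coefficients in $\{0,\pm 1\}$ alternating in sign, so each $\chi_{m,n}(v_{i})$ lies in $\{0,1\}$; in the present convention the condition $\HFK(K_{m,n},v)\simeq\F$ is exactly $\chi_{m,n}(v)=1$, consistently with Lemma \ref{HFL after l} and with the fact that $\chi_{m,n}\equiv 1$ on $(-\infty,-g(K_{m,n})]$.

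The heart of the argument is the claim that the sequence $\chi_{m,n}(v_{0}),\ldots,\chi_{m,n}(v_{r-1})$ is weakly decreasing in $i$. If $\chi_{m,n}(v_{j-1})=0$ then $v_{j-1}>-g(K_{m,n})$, and adding $l\ge 2g(K_{m,n})$ from Lemma \ref{HFL after l} gives $v_{j}>g(K_{m,n})$, forcing $\chi_{m,n}(v_{j})=0$. Consequently $J:=\{j:\HFK(K_{m,n},v_{j})\simeq\F\}$ is an initial segment of $\{0,\ldots,r-1\}$: if $J=\varnothing$ then $\beta(k)=-1$ and in particular $\HFK(K_{m,n},k-c)=\HFK(K_{m,n},v_{0})=0$, while if $J=\{0,\ldots,j_{\max}\}$ then $\beta(k)=|J|-1=j_{\max}$, giving \eqref{beta as max}. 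The main obstacle is the monotonicity step, which depends essentially on $l\ge 2g(K_{m,n})$ and hence on the hypothesis $n/m>2g(K)-1$; the remaining manipulations are routine.
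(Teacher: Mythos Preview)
Your proof is correct and follows essentially the same approach as the paper: both reduce $\beta(k)+1$ to the sum $\sum_{j=0}^{r-1}\dim\HFK(K_{m,n},k-c+lj)$ via the diagonal formula for $\hh$, and then use $l\ge 2g(K_{m,n})$ to show the nonvanishing set is an initial segment. The only cosmetic difference is that you argue the monotonicity in contrapositive form (zero at $j-1$ forces zero at $j$, via $v_{j-1}>-g$ and $v_j>g$), whereas the paper argues that $\F$ at $j$ forces $\F$ at all $j'<j$ by invoking Lemma~\ref{HFL after l} directly.
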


\begin{proof}
By \eqref{def hh} and Lemma \ref{h on diagonal} we have
$$
\beta(k)+1=h_{rm,rn}(k-1,\ldots,k-1)-h_{rm,rn}(k,\ldots,k)=\sum_{j=0}^{r-1}\left(h_{m,n}(k-1-c+lj)-h_{m,n}(k-c+lj)\right).
$$
Note that 
$h_{m,n}(k-1-c+lj)-h_{m,n}(k-c+lj)=\dim \HFK(K_{m,n},k-c+lj)\in \{0,1\}.$
If $\HFK(K_{m,n},k-c+lj)\simeq \F$ then $k-c+lj\le g(K_{m,n})$,  so by Lemma \ref{HFL after l}  $\HFK(K_{m,n},k-c+lj')\simeq \F$ for all $j'<j$.
Therefore, if $\HFK(K_{m,n},k-c)=0$ then $\beta(k)=-1$, otherwise 
$$
\HFK(K_{m,n},k-c+lj)=\begin{cases}
\F&\text{if}\  j\le \beta(k),\\
0&\text{if}\  j> \beta(k).\\
\end{cases}
$$
\end{proof}

Suppose that $v_1=\ldots=v_{\lambda_1}=u_1,v_{\lambda_1+1}=\ldots=v_{\lambda_1+\lambda_2}=u_2,\ldots,v_{\lambda_1+\ldots+\lambda_{s-1}+1}=\ldots=v_r=u_s$ where $u_1<u_2<\ldots <u_s$ and $\lambda_1+\ldots+\lambda_s=r$. We will abbreviate this as $v=(u_1^{\lambda_1},\ldots,u_s^{\lambda_s}).$

\begin{lemma}
\label{lem:zero}
Suppose that $\beta(u_s)<r-\lambda_s$. 
Then for any subset $B \subset \{1, \dots, r-1\}$ one has
$h_{rm,rn}(v-e_B)=h_{rm,rn}(v-e_B-e_r)$.
\end{lemma}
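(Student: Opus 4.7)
The plan is to apply the formula \eqref{h for link} after sorting the coordinates, identify the unique position at which sorted $v - e_B$ and sorted $v - e_B - e_r$ differ, and then invoke the characterization \eqref{beta as max} of $\beta$ to show that the corresponding $\HFK$ term vanishes.

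First I will use that $h_{rm,rn}$ is symmetric in its $r$ arguments (the components of the cable link are isotopic), so its value depends only on the multiset of coordinates. Set $w := v - e_B$. Because $B \subset \{1,\ldots,r-1\}$, the $r$-th coordinate of $w$ is still $u_s$, so passing to $w - e_r$ replaces exactly one copy of $u_s$ in the multiset by $u_s - 1$. Since $u_1 < u_2 < \cdots < u_s$, every coordinate of $w$ lies in $\{u_j-1,u_j\}$ for its own block $j$, so $u_s$ is the largest value appearing.

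Next I will locate the differing sorted position. Let $N$ be the number of coordinates of $w$ equal to $u_s$; only indices in block $s$ can take this value, and index $r$ contributes, so $1 \le N \le \lambda_s$. In sorted $w$ the copies of $u_s$ occupy positions $r-N+1,\ldots,r$, while in sorted $w - e_r$ they occupy positions $r-N+2,\ldots,r$ and one additional $u_s - 1$ slots into position $r-N+1$; every other position is unchanged, since the sub-multiset of values $\le u_s-2$ is identical in $w$ and in $w - e_r$. Applying \eqref{h for link} termwise gives
\begin{align*}
h_{rm,rn}(w) - h_{rm,rn}(w - e_r)
&= h_{m,n}\bigl(u_s - c + l(r-N)\bigr) - h_{m,n}\bigl(u_s - 1 - c + l(r-N)\bigr) \\
&= -\dim \HFK\bigl(K_{m,n},\, u_s - c + l(r-N)\bigr).
\end{align*}

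Finally, I will invoke \eqref{beta as max}: the group $\HFK(K_{m,n}, u_s - c + lj)$ vanishes whenever $j > \beta(u_s)$, as long as $j$ lies in the admissible range $\{0,\ldots,r-1\}$. Setting $j = r-N$, the hypothesis $\beta(u_s) < r - \lambda_s$ combined with $1 \le N \le \lambda_s$ yields $0 \le r - N \le r-1$ and $r - N \ge r - \lambda_s > \beta(u_s)$, so the $\HFK$ group vanishes and the difference is zero. The main obstacle is the sorted-order comparison of the second paragraph when adjacent blocks happen to satisfy $u_{j+1} - u_j = 1$, so that values from distinct blocks can coincide in the sorted sequence; the remedy is to work only with the multiset, tracking the multiplicity $N$ of $u_s$ and invoking the symmetry of $h_{rm,rn}$ so that the identity of each individual coordinate is irrelevant.
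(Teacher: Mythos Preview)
Your proof is correct and follows essentially the same approach as the paper: sort the coordinates, apply formula \eqref{h for link}, and use the characterization \eqref{beta as max} to show the relevant $\HFK$ group vanishes. The only difference is bookkeeping---the paper argues that each of the last $\lambda_s$ sorted positions contributes the same value whether it holds $u_s$ or $u_s-1$ (so the precise distribution within those positions is irrelevant), whereas you pin down the single sorted position $r-N+1$ at which the two vectors differ and show that this one term is unchanged; both arguments rest on the same vanishing $\HFK(K_{m,n},u_s-c+lj)=0$ for $j\ge r-\lambda_s$.
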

\begin{proof}
To apply \eqref{h for link}, one needs to reorder the components of the vectors $v-e_B$ and $v-e_B-e_r$.
Note that in both cases the last (largest) $\lambda_s$ components are equal either to $u_s$ or to $u_s-1$, and
the corresponding contributions to $h_{rm,rn}$ are equal to $h_{m,n}(u_s-c+l(r-\lambda_s)+lj)$ or to $h_{m,n}(u_s-c+l(r-\lambda_s)+lj-1)$,
respectively ($j=0,\ldots,\lambda_s-1$). On the other hand, by \eqref{beta as max}
one has
\[
\HFK(K_{m,n},u_s-c+l(r-\lambda_s)+lj)=0
\]
and so
\[
 h_{m,n}(u_s-c+l(r-\lambda_s)+lj-1)=h_{m,n}(u_s-c+l(r-\lambda_s)+lj).
\]
\end{proof}

\begin{lemma}
\label{lem:h for nonzero}
If $\beta(u_s)\ge r-\lambda_s$ then 
$h_{rm,rn}(v)=\hh(u_s)+ru_s-|v|.$
\end{lemma}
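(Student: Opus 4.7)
The plan is to apply the explicit formula \eqref{h for link}. After sorting the coordinates of $v$ in non-decreasing order as $w_0\le w_1\le\ldots\le w_{r-1}$, I have
$$
h_{rm,rn}(v)=\sum_{p=0}^{r-1} h_{m,n}(w_p-c+pl),
$$
while Lemma \ref{h on diagonal} gives $\hh(u_s)=\sum_{p=0}^{r-1} h_{m,n}(u_s-c+pl)$. Subtracting the two identities reduces the lemma to showing
$$
\sum_{p=0}^{r-1}\bigl[h_{m,n}(u_s-c+pl)-h_{m,n}(w_p-c+pl)\bigr] = -(ru_s-|v|).
$$

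Each summand rewrites by telescoping, using $h_{m,n}(y-1)-h_{m,n}(y)=\dim\HFK(K_{m,n},y)\in\{0,1\}$ (valid since $K_{m,n}$ is an L-space knot), as $-\sum_{y=w_p-c+pl+1}^{u_s-c+pl}\dim\HFK(K_{m,n},y)$. Since this sum has exactly $u_s-w_p$ terms, the goal becomes to show that $\dim\HFK(K_{m,n},y)=1$ at every lattice point of every non-empty interval $(w_p-c+pl,\,u_s-c+pl]$. The last $\lambda_s$ sorted entries all equal $u_s$, so the intervals with $p\ge r-\lambda_s$ are empty and only the range $p<r-\lambda_s$ needs attention.

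The main obstacle is precisely this pointwise verification, namely converting the diagonal hypothesis $\beta(u_s)\ge r-\lambda_s$ into genuine knot-Floer positivity on a range of Alexander gradings. By \eqref{beta as max}, the hypothesis forces $\HFK(K_{m,n},u_s-c+l(r-\lambda_s))\simeq \F$, hence $u_s-c+l(r-\lambda_s)\le g(K_{m,n})$. Consequently, for every $p\le r-\lambda_s-1$ the right endpoint satisfies $u_s-c+pl\le g(K_{m,n})-l$, so the whole interval falls in the range covered by Lemma \ref{HFL after l}, which forces $\HFK(K_{m,n},y)\simeq \F$ throughout. Summing the contributions over $p$ then yields $\sum_{p}-(u_s-w_p)=-(ru_s-|v|)$, which rearranges to $h_{rm,rn}(v)=\hh(u_s)+ru_s-|v|$, completing the argument.
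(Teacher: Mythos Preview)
Your proof is correct and follows essentially the same route as the paper: both arguments apply \eqref{h for link}, use the hypothesis via \eqref{beta as max} to deduce $u_s-c+l(r-\lambda_s)\le g(K_{m,n})$, and then invoke Lemma \ref{HFL after l} to force $\HFK(K_{m,n},y)\simeq\F$ on the relevant range $y\le g(K_{m,n})-l$, yielding $h_{m,n}(w_p-c+pl)-h_{m,n}(u_s-c+pl)=u_s-w_p$ for each $p<r-\lambda_s$. Your telescoping presentation makes the bookkeeping slightly more explicit, but the idea is identical.
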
 

\begin{proof}
Since $\beta(u_s)\ge r-\lambda_s$, we have $\HFK(K_{m,n},u_s-c+l(r-\lambda_s))\simeq \F$, so $$u_s-c+l(r-\lambda_s)\le g(K_{m,n}).$$
For $i\le r-\lambda_s$ we get 
$$v_i-c+l(i-1)<u_s-c+l(i-1)\le u_s-c+l(r-\lambda_s)-l\le g(K_{m,n})-l,$$
so by Lemma \ref{HFL after l}, $\HFK(K_{m,n},w)\simeq \F$ for all $w\in [v_i-c+l(i-1),u_s-c+l(i-1)]$, and
$$h_{m,n}(v_i-c+l(i-1))=h_{m,n}(u_s-c+l(i-1))+(u_s-v_i).$$
Now the statement follows from Lemma \ref{h for link}.
\end{proof}

\begin{lemma}
\label{lem:nonzero}
Suppose that $\beta(u_s)\ge r-\lambda_s$. 
Then for any subsets $B'\subset \{1,\ldots,r-\lambda_s\}$ and $B''\subset \{r-\lambda_s+1,\ldots,r\}$ one has
$$h_{rm,rn}(v-e_{B'}-e_{B''})=h_{rm,rn}(v)+|B'|+\min(|B''|,\beta(u_s)-r+\lambda_s+1).$$
\end{lemma}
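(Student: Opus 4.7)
My plan is to split the subtraction $e_{B'}+e_{B''}$ into two stages, handling $B'$ with Lemma \ref{lem:h for nonzero} and then $B''$ by a direct term-by-term comparison via \eqref{h for link}. First, since $B'\subset\{1,\ldots,r-\lambda_s\}$ never touches the top $\lambda_s$ positions, the vector $v-e_{B'}$ still has $u_s$ repeated exactly $\lambda_s$ times as its largest entries, so the hypothesis $\beta(u_s)\ge r-\lambda_s$ continues to apply. Lemma \ref{lem:h for nonzero} then yields
\[
h_{rm,rn}(v-e_{B'})=\hh(u_s)+ru_s-|v-e_{B'}|=h_{rm,rn}(v)+|B'|.
\]
So it suffices to prove $h_{rm,rn}(v-e_{B'}-e_{B''})-h_{rm,rn}(v-e_{B'})=\min(|B''|,\beta(u_s)-r+\lambda_s+1)$.

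For this I will apply \eqref{h for link} to both $v-e_{B'}$ and $v-e_{B'}-e_{B''}$. The bottom $r-\lambda_s$ sorted coordinates (all bounded above by $u_{s-1}$) coincide for the two vectors. In the top block, $B''$ converts $|B''|$ of the $u_s$-entries into $u_s-1$; since $u_{s-1}<u_s$ we have $u_{s-1}\le u_s-1$, so these reduced values sort strictly above the bottom block and occupy positions $r-\lambda_s+1,\ldots,r-\lambda_s+|B''|$ in the new sort. The $h_{rm,rn}$ difference therefore telescopes to
\[
\sum_{j=0}^{|B''|-1}\bigl[h_{m,n}(u_s-1-c+(r-\lambda_s+j)l)-h_{m,n}(u_s-c+(r-\lambda_s+j)l)\bigr]=\sum_{j=0}^{|B''|-1}\dim\HFK(K_{m,n},u_s-c+(r-\lambda_s+j)l).
\]

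To evaluate this sum I will invoke \eqref{beta as max}: $\beta(u_s)$ is the largest $k\in\{0,\ldots,r-1\}$ for which $\HFK(K_{m,n},u_s-c+kl)\simeq\F$, and Lemma \ref{HFL after l} (which forces no gaps below the top) upgrades this to $\dim\HFK(K_{m,n},u_s-c+kl)=1$ for all $0\le k\le\beta(u_s)$ and $=0$ otherwise. Counting the $j\in\{0,\ldots,|B''|-1\}$ satisfying $r-\lambda_s+j\le\beta(u_s)$ gives exactly $\min(|B''|,\beta(u_s)-r+\lambda_s+1)$. Combining with the first step produces the claimed formula.

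The main obstacle is verifying the sorted structure of $v-e_{B'}-e_{B''}$: one must check that the $|B''|$ reduced values $u_s-1$ really sit strictly above every bottom-block coordinate, even after the $B'$-reductions perturb the bottom block. This is ensured by the strict inequality $u_{s-1}<u_s$, so $u_{s-1}\le u_s-1$, and any ties that appear (say when $u_{s-1}=u_s-1$) leave the sum \eqref{h for link} unchanged because equal values in adjacent sort positions contribute symmetrically. With this sorting in hand, both contributions separate cleanly and the computation is routine.
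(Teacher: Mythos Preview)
Your proof is correct and follows essentially the same approach as the paper's. The only minor differences are that you handle $B'$ and $B''$ in the opposite order (you compare $v$ to $v-e_{B'}$ and then to $v-e_{B'}-e_{B''}$, while the paper compares $v$ to $v-e_{B''}$ and then to $v-e_{B'}-e_{B''}$) and that you invoke Lemma~\ref{lem:h for nonzero} for the $B'$ step rather than repeating its argument via Lemma~\ref{HFL after l}; the underlying computations are identical.
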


\begin{proof}
Since $\HFK(K_{m,n},u_s-c+l(r-\lambda_s))\simeq \F$, we have $u_s-c+l(r-\lambda_s)\le g(K_{m,n})$, so for all $i\le r-\lambda_s$
one has $v_i-c+l(i-1)<u_s-c+l(r-\lambda_s)-l\le g(K_{m,n})-l$, and by Lemma \ref{HFL after l} $\HFK(K_{m,n},v_i-c+l(i-1))\simeq \F$,
and $h_{m,n}(v_i-1-c+l(i-1))=h_{m,n}(v_i-c+l(i-1))+1.$ Therefore $h_{rm,rn}(v-e_{B'}-e_{B''})=|B'|+h_{rm,rn}(v-e_{B''}).$ Finally,
\begin{align*}
h_{rm,rn}(v-e_{B''})-h_{rm,rn}(v)&=\sum_{j=0}^{|B''|}\left(h_{m,n}(u_s-1-c+l(r-\lambda_s)+lj)-h_{m,n}(u_s-c+l(r-\lambda_s)+lj\right) \\
	&=\min(|B''|,\beta(u_s)-r+\lambda_s+1).
\end{align*}
\end{proof}

\subsection{Spectral sequence for \texorpdfstring{$\HFL$}{HFL-}}
\label{sec:minus}

\begin{definition}
Let $\cE_r$ denote the exterior algebra over $\F$ with variables $z_1,\ldots,z_r$. Let us define the {\em cube differential} on $\cE_r$ by the equation
$$
\partial(z_{\alpha_1}\wedge\ldots\wedge z_{\alpha_k})=\sum_{j=1}^{k}z_{\alpha_1}\wedge\ldots \wedge \widehat{z_{\alpha_j}}\wedge \ldots \wedge z_{\alpha_k},
$$
and the {\em $b$-truncated differential} on $\cE_r[U]$ by the equation
$$
\partial^{(b)}(z_{\alpha_1}\wedge\ldots\wedge z_{\alpha_k})=\begin{cases}
U \partial(z_{\alpha_1}\wedge\ldots\wedge z_{\alpha_k}), & \text{if}\ k\le b\\
 \partial(z_{\alpha_1}\wedge\ldots\wedge z_{\alpha_k}), & \text{if}\ k> b.\\
\end{cases}
$$
\end{definition}

More invariantly, one can define the {\em weight} of a monomial $z_{\alpha}=z_{\alpha_1}\wedge\ldots\wedge z_{\alpha_k}$ as
$w(z_{\alpha})=\min(|\alpha|,b)$, and the $b$-truncated differential  is given by the equation:
\begin{equation}
\label{d b}
\partial^{(b)}(z_{\alpha})=\sum_{i\in \alpha} U^{w(\alpha)-w(\alpha-\alpha_i)}z_{\alpha-\alpha_i}.
\end{equation}
Indeed, $w(\alpha)-w(\alpha-\alpha_i)=1$ for $|\alpha|\le b$ and $w(\alpha)-w(\alpha-\alpha_i)=0$ for $|\alpha|>b$. 

\begin{definition}
Let $\cEr_r\subset \cE_r$ be the subalgebra of $\cE_r$ generated by the differences $z_i-z_j$ for all $i\neq j$.
\end{definition}

\begin{lemma}
\label{Ker d}
The kernel of the cube differential $\partial$ on $\cE_r$ coincides with $\cEr_r$.
\end{lemma}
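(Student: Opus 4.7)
The plan is to prove this by a linear change of coordinates that diagonalizes $\partial$. First I would observe that the formula defining $\partial$ makes it the unique $\F$-linear derivation on $\cE_r$ of degree $-1$ with $\partial(z_i) = 1$ for all $i$. Since we work over $\F = \Z/2\Z$ there are no signs, and a direct induction on monomial length shows that the given sum formula satisfies the Leibniz rule $\partial(\omega \wedge \omega') = \partial(\omega)\wedge \omega' + \omega \wedge \partial(\omega')$. The inclusion $\cEr_r \subseteq \ker \partial$ is then immediate, since $\partial(z_i - z_j) = 1 - 1 = 0$ and $\partial$ is a derivation.

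Next I would introduce the new basis of $\cE_r^1$ given by $w_1 = z_1$ and $w_i = z_i - z_1$ for $i = 2, \ldots, r$. This is an invertible linear change of variables, so $\cE_r$ is still the exterior algebra on $w_1, \ldots, w_r$ and admits the $\F$-basis $\{w_1^{\varep_1}\wedge\cdots\wedge w_r^{\varep_r} : \varep_i \in \{0,1\}\}$. By the derivation property, $\partial(w_1) = 1$ and $\partial(w_i) = \partial(z_i) - \partial(z_1) = 0$ for $i \geq 2$. Applying Leibniz to a monomial in the $w$-basis, only the $w_1$-factor contributes, so
\[
\partial\bigl(w_1^{\varep_1} \wedge w_2^{\varep_2} \wedge \cdots \wedge w_r^{\varep_r}\bigr) = \varep_1 \cdot w_2^{\varep_2} \wedge \cdots \wedge w_r^{\varep_r}.
\]
Hence $\ker \partial$ is precisely the $\F$-span of those basis monomials with $\varep_1 = 0$, i.e.\ the subalgebra $\langle w_2, \ldots, w_r \rangle \subset \cE_r$.

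Finally I would identify this subalgebra with $\cEr_r$. One inclusion is clear because $w_i = z_i - z_1 \in \cEr_r$ for $i \geq 2$, so $\langle w_2,\ldots,w_r\rangle \subseteq \cEr_r$. For the reverse inclusion, each generator $z_i - z_j$ of $\cEr_r$ can be rewritten either as $w_i - w_j$ (when $i, j \geq 2$) or as $\pm w_i$ (when one index equals $1$), and so lies in $\langle w_2, \ldots, w_r \rangle$. Combining these identifications yields $\ker \partial = \langle w_2,\ldots, w_r\rangle = \cEr_r$.

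I do not anticipate a serious obstacle: the entire content of the lemma is extracted by the change of variables $w_i = z_i - z_1$, which simultaneously turns the generators of $\cEr_r$ into coordinate generators and decouples the derivation into a single nonzero direction. The only minor check worth pausing over is the derivation property of $\partial$, which is a Koszul-style identity that holds by inspection over $\F$.
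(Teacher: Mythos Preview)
Your proof is correct and takes a genuinely different route from the paper's. The paper argues as follows: one inclusion is the same as yours (derivation plus $\partial(z_i-z_j)=0$), but for the reverse inclusion it invokes the acyclicity of $(\cE_r,\partial)$ to conclude $\ker\partial=\operatorname{im}\partial$, and then verifies the explicit identity
\[
\partial(z_{\alpha_1}\wedge\cdots\wedge z_{\alpha_k})=(z_{\alpha_2}-z_{\alpha_1})\wedge\cdots\wedge (z_{\alpha_k}-z_{\alpha_{k-1}}),
\]
which lands visibly in $\cEr_r$. Your change of variables $w_1=z_1$, $w_i=z_i-z_1$ bypasses the appeal to acyclicity and computes the kernel directly; it is arguably more self-contained and also exhibits $\cEr_r$ as an exterior algebra on $r-1$ generators, which is exactly what the paper needs immediately afterward in Lemma~\ref{homology d b}. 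The paper's approach, on the other hand, yields that explicit product formula for $\partial$ on monomials, which is a pleasant byproduct even if not strictly needed later. Either argument is short and adequate.
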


\begin{proof}
It is clear that $\partial(z_i-z_j)=0$, and Leibniz rule implies vanishing of $\partial$ on $\cEr_r$. 
Let us prove that $\Ker\partial\subset \cEr_r$. Since $(\cE_r,\partial)$ is acyclic, it is sufficient to prove that
the image of every monomial $z_{\alpha_1}\wedge\cdots\wedge z_{\alpha_k}$ is contained in $\cE_r$. Indeed, one can check that
$$
\partial(z_{\alpha_1}\wedge\cdots\wedge z_{\alpha_k})=(z_{\alpha_2}-z_{\alpha_1})\wedge\cdots\wedge (z_{\alpha_k}-z_{\alpha_{k-1}}).
$$
\end{proof}

\begin{lemma}
\label{homology d b}
The homology of $\partial^{(b)}$ is given by the following equation:
$$\dim H_{k}(\cE_r[U],\partial^{(b)})=
\begin{cases}
\binom{r-1}{k},&\text{if}\  k< b\\
0,& \text{if}\ k\ge b.
\end{cases}
$$
\end{lemma}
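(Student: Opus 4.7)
The plan is to compute the homology of $(\cE_r[U],\partial^{(b)})$ directly, splitting into cases based on how the degree $k$ compares to $b$. By \eqref{d b}, on $C_k := \cE_r^{(k)}\otimes \F[U]$ the differential $\partial^{(b)}$ equals $U\partial$ when $k\le b$ and equals $\partial$ when $k>b$, where $\partial$ is the cube differential of Lemma \ref{Ker d}.

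First I would dispose of the cases $k\ge b$. For $k>b$, both $\partial^{(b)}\colon C_{k+1}\to C_k$ and $\partial^{(b)}\colon C_k\to C_{k-1}$ are given by $\partial$, so in this range the complex coincides with a slice of $(\cE_r,\partial)\otimes \F[U]$, which is acyclic; hence $H_k=0$. For $k=b$, the outgoing differential is $U\partial$ while the incoming one (from $C_{b+1}$) is $\partial$. Since $U$ is not a zero divisor in $\F[U]$, the cycles are $Z_b\otimes \F[U]$ and the boundaries are $B_b\otimes \F[U]$, where $Z_b$ and $B_b$ denote the $\partial$-cycles and $\partial$-boundaries of $\cE_r$ in degree $b$. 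Acyclicity of $(\cE_r,\partial)$ gives $Z_b=B_b$, so again $H_b=0$.

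For $k<b$ both the differential leaving $C_k$ and the differential entering it from $C_{k+1}$ equal $U\partial$ (the latter because $k+1\le b$). Again the cycles are $Z_k\otimes \F[U]$, but now the boundaries are the strictly smaller subspace $U\cdot B_k\otimes \F[U]$. Using $Z_k=B_k$, this yields
\begin{equation*}
H_k(\cE_r[U],\partial^{(b)})\;\cong\;Z_k\otimes\bigl(\F[U]/U\F[U]\bigr)\;\cong\;\ker\partial\cap\cE_r^{(k)}.
\end{equation*}
By Lemma \ref{Ker d} this last space is $\cEr_r\cap \cE_r^{(k)}$. Setting $w_i:=z_i-z_1$ for $i=2,\ldots,r$, every difference satisfies $z_i-z_j=w_i-w_j$, so $\cEr_r$ is the exterior subalgebra on $w_2,\ldots,w_r$, whose degree $k$ part has dimension $\binom{r-1}{k}$.

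The only step that needs real care is the transition at $k=b$: the asymmetry between an incoming $\partial$ and an outgoing $U\partial$ is exactly what kills the homology there while leaving the copy of $\cEr_r^{(k)}$ intact for every $k<b$. Everything else amounts to keeping track of which differentials carry a factor of $U$ and invoking acyclicity of the standard exterior complex.
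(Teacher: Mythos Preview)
Your proof is correct and follows essentially the same approach as the paper: both use acyclicity of $(\cE_r,\partial)$ to kill the homology for $k\ge b$, identify $H_k$ for $k<b$ with $\ker\partial$ in degree $k$ (supported at $U=0$), and then invoke Lemma \ref{Ker d} to compute its dimension as $\binom{r-1}{k}$. Your treatment is more explicit than the paper's, particularly in separating out the transitional case $k=b$ and checking carefully that the mixed incoming/outgoing differentials still give trivial homology there, but the underlying argument is the same.
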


\begin{proof}
Since $\partial$ is acyclic, one immediately gets $H_{k}(\cE_r[U],\partial^{(b)})=0$ for $k\ge b$.
For $k<b$, the homology is supported at the zeroth power of $U$ and one has $H_{k}(\cE_r[U])\simeq \Ker(\partial|_{\wedge^k(z_1,\ldots,z_r)})$. 
The dimension of the latter kernel equals
$$
\dim \Ker(\partial|_{\wedge^k(z_1,\ldots,z_r)})=\dim \wedge^{k}(z_1-z_2,\ldots,z_1-z_r)=\binom{r-1}{k}.
$$
\end{proof}



\begin{proof}[Proof of Theorem \ref{homology}]
Let us compute $\HFL(K_{rm,rn},v)$ using the spectral sequence constructed in Theorem \ref{spectral}. 
By Lemma \ref{lem:zero}, in case \ref{it:HFL0} it is easy to see that the complex $(E_1,\partial_1)$ is contractible in the direction of $e_r$ and 
$E_2=H_{*}(E_1,\partial_1)=0$.

In case \ref{it:HFLF} by Lemma \ref{lem:nonzero} and \eqref{d b} one can write $E_1=\cE_{r-\lambda_s}[U]\otimes_{\F[U]} \cE_{\lambda_s}[U]$,
 a tensor product of chain complexes of $\F[U]$--modules, and $\partial_1$ acts as $U\partial$ on the first factor and as $\partial^{(\beta+1)}$ on the second one.
This implies
\begin{equation}
\label{e2minus}
E_2=H_{*}(E_1,\partial_1)\simeq \cE_{r-\lambda_s}\otimes_{\F} H_{*}\left(\cE_{\lambda_s}[U],\partial^{(\beta+1)}\right).
\end{equation}
Indeed, $U$ acts trivially on $H_{*}\left(\cE_{\lambda_s}[U],\partial^{(\beta+1)}\right)$, so one can take the homology of $\partial^{(\beta+1)}$ first and then
observe that $U\partial$ vanishes on
$$
\cE_{r-\lambda_s}[U]\otimes_{\F[U]} H_{*}\left(\cE_{\lambda_s}[U],\partial^{(\beta+1)}\right)\simeq \cE_{r-\lambda_s}\otimes_{\F} H_{*}\left(\cE_{\lambda_s}[U],\partial^{(\beta+1)}\right).
$$
By Lemma \ref{homology d b}, 
the $E_2$ page \eqref{e2minus} agrees with the statement of the theorem, hence we need to prove that the spectral sequence collapses. 

Indeed, the $E_1$ page is bigraded by the homological degree and $|B|$ (see Remark \ref{rem:degrees}). By Lemma \ref{homology d b} any surviving homology class on the $E_2$ page 
of cube degree $x$ has bidegree $(x,-2h_{rm,rn}(v)-2x)$, so all bidegrees on the $E_2$ page belong to the same line of slope $(-2)$. Therefore all higher differentials must vanish.

Finally, a simple formula for $h_{rm,rn}(v)$ in case \ref{it:HFLF} follows from Lemma \ref{lem:h for nonzero}.
\end{proof}

\subsection{Action of \texorpdfstring{$U_i$}{Ui}}

One can use Proposition \ref{U on HFL} to compute the action of $U_i$ on $\HFL$ for cable links. 
Recall that $R=\F[U_1\ldots,U_r]$. Throughout this section we assume $n/m>2g(K)-1$.
We start with a simple algebraic statement.

\begin{proposition}
\label{fixgrad}
Let $\cC$ be an $\F$-algebra. Given a finite collection of elements $c_{\alpha}\in \cC$ and vectors $v^{(\alpha)}\in \Z^r$, consider
the ideal $\cI\subset \cC\otimes_{\F} R$ generated by $c_{\alpha}\otimes U_1^{v^{(\alpha)}_1}\cdots U_r^{v^{(\alpha)}_r}$.
Then the following statements hold:
\begin{enumerate}
\item[(a)] The quotient $(\cC\otimes_{\F} R)/\cI$ can be equipped with a $\Z^r$--grading, with $U_i$ of grading $(-e_i)$ and $\cC$ of grading 0.
\item[(b)] The subspace of $(\cC\otimes_{\F} R)/\cI$ with grading $v$ is isomorphic to
$$
\left[(\cC\otimes_{\F} R)/\cI\right] (v)\simeq \cC/\left(c_{\alpha}: v^{(\alpha)}\preceq -v\right).
$$
\end{enumerate}
\end{proposition}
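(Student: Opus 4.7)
The plan is to leverage the natural $\Z^r$-grading on $R=\F[U_1,\ldots,U_r]$ in which $U_i$ has multi-degree $-e_i$, extended to $\cC\otimes_\F R$ by declaring $\cC$ to sit in multi-degree $0$. Under this convention the generators $c_\alpha \otimes U_1^{v^{(\alpha)}_1}\cdots U_r^{v^{(\alpha)}_r}$ of $\cI$ are homogeneous of multi-degree $-v^{(\alpha)}$, so $\cI$ is a homogeneous ideal and the quotient $(\cC\otimes_\F R)/\cI$ inherits the grading. This yields part (a) essentially for free.

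For part (b), the key observation is that every multi-graded piece of $R$ has $\F$-dimension at most one: the piece of degree $v$ is spanned by the monomial $U_1^{-v_1}\cdots U_r^{-v_r}$ precisely when $-v\in\N^r$, and is zero otherwise. Hence the $\F$-linear map $c\mapsto c\otimes U_1^{-v_1}\cdots U_r^{-v_r}$ is an isomorphism $\cC\xrightarrow{\sim} (\cC\otimes_\F R)(v)$ whenever $v\preceq 0$ (and both sides are to be read as vanishing when $v$ has any positive coordinate). It therefore suffices to identify the image in $\cC$ of the degree-$v$ piece $\cI(v)\subset (\cC\otimes_\F R)(v)$ under this isomorphism.

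To do this I would analyze a general homogeneous element of $\cI$ of degree $v$: it is a finite $\F$-linear combination of products of the form $(d\otimes U^{w})(c_\alpha\otimes U^{v^{(\alpha)}})$ with $d\in\cC$ and $w\in\N^r$ satisfying $-w-v^{(\alpha)}=v$. Such a $w$ exists in $\N^r$ if and only if $v^{(\alpha)}\preceq -v$, in which case $w$ is forced to be $-v-v^{(\alpha)}$ and the product equals $dc_\alpha\otimes U^{-v}$, corresponding under the above identification to $dc_\alpha\in\cC$. Varying $\alpha$ and $d$ shows that $\cI(v)$ maps exactly onto the ideal of $\cC$ generated by $\{c_\alpha : v^{(\alpha)}\preceq -v\}$, and quotienting produces the claimed isomorphism.

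There is no serious obstacle here; the whole argument is bookkeeping in a $\Z^r$-graded tensor product, with the only genuine input being the one-dimensionality of each multi-graded piece of $R$. The one subtlety deserving care is the sign convention: assigning $U_i$ the negative multi-degree $-e_i$ is precisely what produces the contragredient relation $v^{(\alpha)}\preceq -v$ (rather than $v\preceq v^{(\alpha)}$), matching the convention elsewhere in the paper that $U_i$ lowers Alexander filtration.
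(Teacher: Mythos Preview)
Your proposal is correct and is precisely the argument the paper has in mind; the paper's own proof consists of the single word ``Straightforward.'' Your careful bookkeeping with the $\Z^r$-grading and the one-dimensionality of each graded piece of $R$ spells out exactly what makes the statement hold.
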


\begin{proof}
Straightforward.
\end{proof}

\begin{definition}
We define $\cA_r=\cE_r\otimes_{\F} R$ and $\cAr_r=\cEr_r\otimes_{\F} R$.
Let $\cI'_{\beta}$ denote the ideal in $\cA_r$ generated by the monomials $(z_{i_1}\wedge\cdots\wedge z_{i_s})\otimes U_{i_{s+1}}\cdots U_{i_{\beta+1}}$
for all $s\le \beta+1$ and all tuples of pairwise distinct $i_1,\ldots,i_{\beta+1}$. Let $\cI_{\beta}:=\cI'_{\beta}\cap \cAr_r$ be the corresponding ideal
in $\cAr_r$.
\end{definition}

The algebras $\cA_r$ and $\cAr_r$ are naturally $\Z^{r+1}$--graded: the generators $z_i$ have Alexander grading $0$ and homological grading $(-1)$, the generators $U_i$ have Alexander grading $(-e_i)$ and homological grading $(-2)$. 

\begin{definition}
We define $\cH(k):=\bigoplus_{\max(v)\le k}\HFL(K_{rm,rn},v)$. Since $U_i$ decreases the Alexander grading, 
$\cH(k)$ is naturally an $R$--module.
\end{definition}

The following theorem clarifies the algebraic structure of Theorem \ref{homology}.

\begin{theorem}
\label{same max}
The following graded $R$--modules are isomorphic:
$$
\cH(k)/\cH(k-1)\simeq \cAr_r/\cI_{\beta(k)}[-2\hh(k)]\{k,\ldots,k\},
$$
where $[\cdot]$ and $\{\cdot\}$ denote the shifts of the homological grading and the Alexander grading, respectively.
\end{theorem}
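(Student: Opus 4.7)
The plan is to construct the isomorphism $\Phi$ globally: first identify the graded piece of the right-hand side at Alexander multi-grading $(k,\ldots,k)$ with $\HFL(K_{rm,rn},(k,\ldots,k))$, and then extend by $R$-linearity to all Alexander gradings, with the main technical work being to verify that the relations defining $\cI_{\beta(k)}$ correspond exactly to the relations encoded by the $U_i$-action on $\HFL$ given by Proposition \ref{U on HFL}.

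For the base case at Alexander grading $(k,\ldots,k)$, which corresponds to Alexander grading $0$ of $\cAr_r/\cI_{\beta(k)}$ before the shift $\{k,\ldots,k\}$, I would inspect the generators directly: the grading-$0$ piece of $\cI'_{\beta(k)}$ is spanned by the monomials $z_I$ with $|I|\ge\beta(k)+1$, so intersecting with $\cEr_r$ leaves exactly the subspace of $\cEr_r$ in exterior degrees $\ge\beta(k)+1$. Therefore the grading-$0$ piece of $\cAr_r/\cI_{\beta(k)}$ is $\cEr_r$ restricted to exterior degrees $\le\beta(k)$, which by Lemma \ref{homology d b} is canonically isomorphic to $H_*(\cE_r[U],\partial^{(\beta(k)+1)})$. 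This is precisely $\HFL(K_{rm,rn},(k,\ldots,k))$ as given by Theorem \ref{homology}(b) with $\lambda=r$. Using $h(k,\ldots,k)=\hh(k)$ from Lemma \ref{h on diagonal}, the homological shift $[-2\hh(k)]$ also aligns the homological gradings.

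The extension to arbitrary Alexander gradings exploits that $\cAr_r$ is generated as an $R$-module by its grading-$0$ piece $\cEr_r$: one defines $\Phi(U^w\cdot z):=U^w\cdot[z]$, where $[z]$ is the image of $z\in\cEr_r$ in $\HFL(K_{rm,rn},(k,\ldots,k))$ and the right-hand $U^w$ acts through the composites of $U_i$-maps from Proposition \ref{U on HFL}. I expect the main obstacle to be verifying that $\Phi$ is well-defined, i.e., that $\cI_{\beta(k)}\subseteq\ker\Phi$. Using Proposition \ref{U on HFL} together with the $h$-function computations in Lemmas \ref{lem:zero}, \ref{lem:nonzero}, and \ref{lem:h for nonzero}, each defining generator $z_I\otimes U_{i_{s+1}}\cdots U_{i_{\beta+1}}$ of $\cI'_{\beta(k)}$ translates into a composite of $U_i$-maps on $\HFL$ that must be shown to land in $\cH(k-1)$, and hence to vanish in the quotient. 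Once well-definedness is established, the grading-by-grading dimension match --- with Theorem \ref{homology}(b) on one side and, on the other, the count of basis elements $\prod_{i\in S}(z_i-z_{j_0})\cdot U^w$ (for a chosen $j_0\in W^c$ with $S\subseteq\{1,\ldots,r\}\setminus\{j_0\}$ satisfying $|S\cap W^c|\le\beta(k)-r+\lambda$) --- confirms that $\Phi$ is an isomorphism of graded $R$-modules.
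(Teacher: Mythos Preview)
Your overall strategy---identify the two sides Alexander-grading by Alexander-grading and check compatibility with the $U_i$-action---is the same as the paper's. The difference is organizational: the paper works locally at each grading via Proposition \ref{fixgrad}, while you attempt to define a single $R$-linear map $\Phi:\cAr_r\to\cH(k)/\cH(k-1)$ and verify that $\cI_{\beta(k)}\subseteq\ker\Phi$.

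There is a genuine gap in your well-definedness step. You propose to check that each generator $z_I\otimes U_{i_{s+1}}\cdots U_{i_{\beta+1}}$ of $\cI'_{\beta(k)}$ maps to zero under $\Phi$. But these generators live in $\cA_r$, not in $\cAr_r$, and your map $\Phi$ is only defined on $\cAr_r$ (its degree-zero piece is $\cEr_r$, not $\cE_r$, and $z_I\notin\cEr_r$ for $I\neq\emptyset$). Since $\cI_{\beta}=\cI'_{\beta}\cap\cAr_r$ is not generated over $\cAr_r$ by those monomials, knowing what $\Phi$ would do on the $z_I\otimes U^w$ is neither defined nor sufficient. You would either have to extend $\Phi$ to all of $\cA_r$---but there is no natural target containing $\cH(k)/\cH(k-1)$ that receives $\cE_r$---or produce explicit $\cAr_r$-generators of $\cI_{\beta}$, which is not immediate.

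This is exactly the issue the paper resolves by passing through $\cA_r/\cI'_\beta$ first. Proposition \ref{fixgrad} identifies $[\cA_r/\cI'_\beta](v-k)$ with an explicit quotient of $\cE_r$; then $[\cAr_r/\cI_\beta](v-k)$, being the image of $\cEr_r$ in that quotient, is the corresponding quotient of $\cEr_r$. One then matches this with the description of $\HFL(K_{rm,rn},v)$ coming from the proof of Theorem \ref{homology} and Lemma \ref{Ker d}. Your proposed basis $\prod_{i\in S}(z_i-z_{j_0})\cdot U^w$ with $|S\cap W^c|\le\beta(k)-r+\lambda$ is correct, but showing it \emph{is} a basis of $[\cAr_r/\cI_\beta](v-k)$ amounts to precisely this Proposition \ref{fixgrad} computation, so your dimension count is not independent of the missing step. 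Once you route the argument through Proposition \ref{fixgrad}, both well-definedness and the dimension match fall out simultaneously, and the check that $U_i$ commutes with the identifications is then straightforward, as in the paper.
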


\begin{proof}
By definition, $\cH(k)/\cH(k-1)$ is supported on the set of Alexander gradings $v$ such that $\max(v)=k$.
The monomial $U_1\cdots U_r$ belongs to the ideal $\cI_{\beta(k)}$, so $\cAr_r/\cI_{\beta(k)}$ is supported on the set of Alexander gradings $u$ with $\max(u)=0$. 

Suppose that exactly $\lambda$ components of $v$ are equal to $k$. Without loss of generality we can assume
$v_1,\ldots,v_{r-\lambda}<k$ and $v_{r-\lambda+1}=\ldots=v_r=k$. It follows from Lemma \ref{Ker d} and the proof of Theorem \ref{homology} 
that $\HFL(K_{rm,rn},v)$ is isomorphic to the quotient of $\cEr_r$ by the ideal generated by degree $\beta-r+\lambda+1$ monomials
in $(z_i-z_j)$ for $i,j>r-\lambda$.  

Consider the subspace of $\cA_r/\cI'_{\beta}$ of Alexander grading $(v_1-k,\ldots,v_{r}-k)$.
By Proposition \ref{fixgrad} it is isomorphic to a quotient of $\cE_r$ modulo the following relations.
For each subset $B\subset \{1,\ldots,r-\lambda\}$ and each degree $\beta+1-|B|$ monomial $m'$ in 
variables $z_i$ for $i\notin B$ there is a relation $m'\otimes \prod_{b\in B}U_b\in \cI'_{\beta}$.
All these relations can be multiplied by an appropriate monomial in $R$ to have Alexander grading $(v_1-k,\ldots,v_{r}-k)$.

Note that such $m'$ should contain at most $r-\lambda-|B|$ factors with indices in  $\{1,\ldots,r-\lambda\}\setminus B$,
hence it contains at least $\beta-r+\lambda+1$ factors with indices in  $\{r-\lambda+1,\ldots,r\}$. 
Therefore  $\left[\cA_r/\cI'_{\beta}\right](v_1-k,\ldots,v_{r}-k)$ is naturally isomorphic to the quotient of $\cE_r$ by the ideal generated by degree $\beta-r+\lambda+1$ monomials
in $z_i$ for $i>r-\lambda$. 

We conclude that $\left[\cAr_r/\cI_{\beta(k)}\right](v_1-k,\ldots,v_{r}-k)$ is isomorphic to $\HFL(K_{rm,rn},v)$. 
The action of $U_i$ on $\cH(k)$ is described by Proposition \ref{U on HFL}. One can check that it commutes with the 
above isomorphisms for different $v$, so we get the isomorphism of $R$--modules.
\end{proof}

We illustrate the above theorem with the following example (cf. Example \ref{ex:3 3 link}).

\begin{example}
Let us describe the subspaces of $\cAr_3/\cI_{1}$ with various Alexander gradings. The ideal $\cI_1$ 
equals:
$$
\cI_1=\left((z_1-z_2)(z_2-z_3),(z_1-z_2)U_3,(z_1-z_3)U_2,(z_2-z_3)U_1,U_1U_2,U_1U_3,U_2U_3\right)\subset \cAr_3.
$$
In the Alexander grading $(0,0,0)$ one  gets
$$\left[\cAr_3/\cI_{1}\right](0,0,0)\simeq \cEr_3/((z_1-z_2)(z_2-z_3))=\langle 1, z_1-z_2, z_2-z_3\rangle,$$
in the Alexander grading $(k,0,0)$ (for $k>0$) one gets two relations 
$$U_1^{k}(z_1-z_2)(z_2-z_3),U_1^{k-1}(z_2-z_3)\in \cI_1.$$
Since the latter implies the former, we get
$$
\left[\cAr_3/\cI_{1}\right](k,0,0)\simeq \cEr_3/(z_2-z_3)=\langle 1, z_1-z_2\rangle.
$$
The map $U_1:\left[\cAr_3/\cI_{1}\right](0,0,0)\to \left[\cAr_3/\cI_{1}\right](1,0,0)$ is a natural projection
$$
\cEr_3/((z_1-z_2)(z_2-z_3)) \to \cEr_3/(z_2-z_3),
$$
while the map $U_1:\left[\cAr_3/\cI_{1}\right](k,0,0)\to \left[\cAr_3/\cI_{1}\right](k+1,0,0)$
is an isomorphism for $k>0$.

The gradings $(0,k,0)$ and $(0,0,k)$ can be treated similarly. Furthermore, $U_iU_j\in \cI_1$ for $i\neq j$,
so all other graded subspaces of $\cAr_3/\cI_{1}$ vanish. 
\end{example}

Since the multiplication by $U_i$ preserves the ideal $\cI_{\beta}$, we get the following useful result.

\begin{corollary}
\label{surj}
If $\max(v)=\max(v-e_i)$, then the map  $$U_i:\HFL(K_{rm,rn},v)\to \HFL(K_{rm,rn},v-e_i)$$ is surjective.
\end{corollary}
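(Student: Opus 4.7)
The plan rests on Theorem \ref{same max} together with a direct reading of Proposition \ref{fixgrad}. First I would observe that the hypothesis $\max(v) = \max(v - e_i) = k$ means that both Alexander gradings $v$ and $v - e_i$ support nonzero pieces inside the subquotient $\cH(k)/\cH(k-1)$, i.e., they survive in
$$\cH(k)/\cH(k-1) \simeq \cAr_r/\cI_{\beta(k)}[-2\hh(k)]\{k,\ldots,k\}.$$
Since this is an isomorphism of $R$--modules, the action of $U_i$ on $\HFL(K_{rm,rn},\cdot)$ between these two gradings is transported to the intrinsic multiplication-by-$U_i$ map on $\cAr_r/\cI_{\beta(k)}$, between the graded pieces at Alexander gradings $u := v - (k,\ldots,k)$ and $u - e_i$, both of which have maximal coordinate $0$.

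Next I would reduce the surjectivity to a formal computation using Proposition \ref{fixgrad}. The proposition describes the subspace of $\cA_r/\cI'_{\beta}$ at Alexander grading $u$ as the quotient of $\cE_r$ by those generators $c_\alpha$ of $\cI'_{\beta}$ whose $U$-weight vector $v^{(\alpha)}$ satisfies $v^{(\alpha)} \preceq -u$; the analogous statement holds for $\cAr_r/\cI_{\beta}$ upon restricting to $\cEr_r$. Passing from $u$ to $u - e_i$ replaces the test inequality $v^{(\alpha)} \preceq -u$ by $v^{(\alpha)} \preceq -u + e_i$, which is strictly weaker. Hence every relation imposed at grading $u$ is also imposed at grading $u - e_i$, so the $U_i$-map is the natural surjection
$$\cEr_r/\bigl(c_\alpha : v^{(\alpha)} \preceq -u\bigr) \twoheadrightarrow \cEr_r/\bigl(c_\alpha : v^{(\alpha)} \preceq -u+e_i\bigr),$$
which is surjective by inspection.

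The main obstacle, modest though it is, is the grading bookkeeping: I must verify that the shift $\{k,\ldots,k\}$ in Theorem \ref{same max} together with the convention in Proposition \ref{fixgrad} (where $U_i$ has Alexander degree $-e_i$) correctly identifies $\HFL(K_{rm,rn}, v)$ with the graded piece of $\cAr_r/\cI_{\beta(k)}$ at $u = v - (k,\ldots,k)$, and that the hypothesis $\max(v) = \max(v - e_i) = k$ really places both sides inside the single subquotient $\cH(k)/\cH(k-1)$ (rather than having one of them fall into a lower layer). Once these identifications are in place, the surjectivity is immediate from the natural-quotient description above.
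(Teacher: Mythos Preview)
Your proposal is correct and follows essentially the same approach as the paper. The paper's proof is the single sentence ``Since the multiplication by $U_i$ preserves the ideal $\cI_{\beta}$, we get the following useful result,'' and your argument via Proposition~\ref{fixgrad} is precisely the unpacking of that sentence: under the $R$--module identification of Theorem~\ref{same max}, the $U_i$--map between the two graded pieces of $\cAr_r/\cI_{\beta(k)}$ is the natural quotient map of $\cEr_r$ by a growing set of relations, hence surjective.
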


\begin{lemma}
\label{diff max}
Suppose that $\max(v)=k$ and $\max(v-e_i)=k-1$, and the homology group $\HFL(K_{rm,rn},v)$ 
does not vanish. Then $\beta(k)=r-1,\beta(k-1)\ge r-2$ and the map 
$$
U_i:\HFL(K_{rm,rn},v)\to \HFL(K_{rm,rn},v-e_i)
$$ 
is surjective.
\end{lemma}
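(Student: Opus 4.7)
The two numerical conclusions are quick. The hypothesis $\max(v)=k>\max(v-e_i)=k-1$ forces $v_i=k$ to be the unique maximal coordinate of $v$, so in the notation of Theorem \ref{homology} we have $\lambda=1$. Non-vanishing of $\HFL(K_{rm,rn},v)$ together with Theorem \ref{homology} then forces $\beta(k)\ge r-1$, and since $\beta(k)\le r-1$ by \eqref{beta as max} we obtain $\beta(k)=r-1$. Applying \eqref{beta as max} once more gives $\HFK(K_{m,n},k-c+l(r-1))\simeq \F$, and since $K_{m,n}$ is an L-space knot this forces $k-c+l(r-1)\le g(K_{m,n})$. Consequently
\[
k-1-c+l(r-2)=\bigl(k-c+l(r-1)\bigr)-l-1 \le g(K_{m,n})-l-1,
\]
so Lemma \ref{HFL after l} yields $\HFK(K_{m,n},k-1-c+l(r-2))\simeq \F$, and \eqref{beta as max} gives $\beta(k-1)\ge r-2$.

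For the surjectivity of $U_i$, if $\HFL(K_{rm,rn},v-e_i)$ vanishes there is nothing to prove, so assume otherwise. Let $\mu$ be the number of coordinates of $v$ other than $v_i$ equal to $k-1$; then $v-e_i$ has $\lambda'=\mu+1$ maximal coordinates at an index set $S\ni i$. My plan is to compute $U_i$ using the chain-level formula \eqref{U on generators} together with Proposition \ref{U on HFL}, paired with the $E_2=E_\infty$ description from the proof of Theorem \ref{homology}: the source is $\HFL(K_{rm,rn},v)\simeq \cE_{r-1}$ (up to grading shift), with generators $[z(v-e_B)]$ indexed by subsets $B\subseteq\{1,\ldots,r\}\setminus\{i\}$, while the target is
\[
\HFL(K_{rm,rn},v-e_i)\simeq \cE_{r-\mu-1}\otimes H_*\bigl(\cE_{\mu+1}[U],\partial^{(\beta(k-1)+1)}\bigr),
\]
whose classes are, by Lemma \ref{Ker d}, represented by products of exterior monomials in $\{z_j:j\notin S\}$ with wedges of differences $z_a-z_i$ for $a\in S\setminus\{i\}$ of total length at most $\beta(k-1)$.

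Surjectivity then reduces to lifting each such target basis element: given $z_{B'}\otimes(z_{a_1}-z_i)\wedge\cdots\wedge(z_{a_s}-z_i)$, I take $B=B'\cup\{a_1,\ldots,a_s\}\subseteq\{1,\ldots,r\}\setminus\{i\}$ and show that $U_i\,[z(v-e_B)]$ represents this class modulo $\partial_1$-boundaries in $E_1(v-e_i)$. The expected main obstacle is the bookkeeping of $U$-exponents: the factor $U^{1-h(v-e_B-e_i)+h(v-e_B)}$ from \eqref{U on generators} has to be computed case-by-case from Lemmas \ref{lem:h for nonzero} and \ref{lem:nonzero}, depending on how many of the indices $a_1,\ldots,a_s$ lie in $B$ and how many coordinates of $v-e_B-e_i$ achieve the maximum $k-1$. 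Once one verifies that the $U$-exponent matches the expected homological degree, the boundary terms of $\partial^{(\beta(k-1)+1)}$ in the target complex kill the extra pieces, leaving exactly the desired difference-monomial representative.
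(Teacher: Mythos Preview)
Your argument for $\beta(k)=r-1$ and $\beta(k-1)\ge r-2$ is correct and essentially identical to the paper's.

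For the surjectivity of $U_i$, however, there is a genuine gap. You claim that $\HFL(K_{rm,rn},v)$ has ``generators $[z(v-e_B)]$ indexed by $B\subseteq\{1,\ldots,r\}\setminus\{i\}$'', but these elements are \emph{not} $\partial_1$-cycles in $E_1(v)$: since $\beta(k)=r-1$, Lemma~\ref{lem:nonzero} gives $h(v-e_B)=h(v)+|B|$ for every $B$, so $\partial_1$ is $U\partial$ on all of $\cE_r[U]$ and $\partial_1(z(v-e_B))=U\,\partial(z_B)\neq 0$ whenever $|B|\ge 1$. The actual $E_2$-classes are the elements of $\cEr_r=\Ker\partial$ (Lemma~\ref{Ker d}), not the monomials $z_B$. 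Consequently your proposed lift ``$U_i[z(v-e_B)]$'' does not make sense as a homology class, and the promised bookkeeping of $U$-exponents would first require replacing every $z(v-e_B)$ by an honest cycle before anything can be compared in $E_2(v-e_i)$. There is also a minor slip: the truncation on the target is $\partial^{(\beta(k-1)-r+\lambda'+1)}$, not $\partial^{(\beta(k-1)+1)}$, so the bound ``total length at most $\beta(k-1)$'' should be $\beta(k-1)-r+\lambda'$.

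The paper sidesteps all of this with a structural observation that makes the computation unnecessary. Once $\beta(k)=r-1$, Theorem~\ref{homology} (or equivalently Theorem~\ref{same max} with the ideal $\cI_{r-1}=0$) gives $\HFL(K_{rm,rn},v)\simeq\cEr_r$ with \emph{no relations}. On the other hand, for any $v-e_i$ the same theorem identifies $\HFL(K_{rm,rn},v-e_i)$ as a quotient of $\cEr_r$, and Proposition~\ref{U on HFL} shows that $U_i$ is precisely the natural projection. Surjectivity is then immediate. If you want to keep your chain-level approach, you should work with the $\cEr_r$-basis of wedges of differences from Lemma~\ref{Ker d} rather than with the monomials $z(v-e_B)$; then $U_i$ visibly carries each such wedge to the same wedge in $E_1(v-e_i)$ (the exponent in \eqref{U on generators} is zero), and the remaining check is that these images span $E_2(v-e_i)$, which is exactly the content of the paper's quotient description.
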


\begin{proof}
Since $\max(v)=k$ and $\max(v-e_i)=k-1$, the multiplicity of $k$ in $v$ equals 1, so by Theorem \ref{homology} 
$\beta(k)\ge r-1$, hence $\beta(k)=r-1$. Therefore $\HFL(K_{rm,rn},v)\simeq \cEr_r$,
so $U_i$ is surjective. Indeed, by Theorem \ref{homology} $\HFL(K_{rm,rn},v-e_i)$ is naturally isomorphic to a quotient of $\cEr_r$,
and by Proposition \ref{U on HFL} $U_i$ coincides with a natural quotient map.  Finally, by \eqref{beta as max} $\HFK(K_{m,n},k-c+l(r-1))\simeq\F$, and by Lemma \ref{HFL after l} $\HFK(K_{m,n},k-1-c+l(r-2))\simeq\F$, so $\beta(k-1)\ge r-2$.
\end{proof}

\begin{proof}[Proof of Theorem \ref{th:splitting}]
Let us prove that the homology classes with diagonal Alexander gradings generate $\HFL$ over $R$. Indeed, given $v=(v_1\le \ldots\le v_r)$ with $\HFL(K_{rm,rn},v)\neq 0$, by Theorems \ref{homology} and \ref{same max} 
one can check that  $\HFL(K_{rm,rn},v_r,\ldots,v_r)\neq 0$ and by Corollary \ref{surj} the map
$$
U_1^{v_r-v_1}\cdots U_{r-1}^{v_{r}-v_{r-1}}:\HFL(K_{rm,rn},v_r,\ldots,v_r)\to \HFL(K_{rm,rn},v)
$$
is surjective.

Let us describe the $R$-modules generated by the diagonal classes in degree $(k,\ldots,k)$. If $\beta(k)=-1$  then
$\HFL(K_{rm,rn},k,\ldots,k)=0$. If $0\le \beta(k)\le r-2$ then 
by Lemma \ref{diff max} the submodule $R\cdot \HFL(K_{rm,rn},k,\ldots,k)$ does not contain any classes with maximal Alexander degree less than $k$,
so by Theorem \ref{same max}
$$R\cdot \HFL(K_{rm,rn},k,\ldots,k)\simeq \cAr_r/\cI_{\beta(k)}=:M_{\beta(k)}$$

Suppose that $\beta(k)=r-1$, and consider minimal $a$ and maximal $b$ such that $a\le k\le b$ and $\beta(i)=r-1$ for $i\in [a,b]$. If there is no minimal $a$, we set $a=-\infty$. By Lemma \ref{diff max}, $\beta(a-1)=r-2$ and all the maps
\begin{multline*}
\HFL(K_{rm,rn},b,\ldots,b)\stackrel{U_1\cdots U_r}{\longrightarrow} \HFL(K_{rm,rn},b-1,\ldots,b-1)\to \ldots\\
\ldots\to \HFL(K_{rm,rn},a,\ldots,a)\stackrel{U_1\cdots U_r}{\longrightarrow} \HFL(K_{rm,rn},a-1,\ldots,a-1)
\end{multline*}
are surjective. Therefore 
$$
R\cdot \HFL(K_{rm,rn},b,\ldots,b)\simeq \cAr_r/(U_1\cdots U_r)^{b-a}\cI_{r-2}=:M_{r-1,b-a+1}
$$ is supported in all Alexander degrees with maximal coordinates in $[a,b]$ and in Alexander degrees with maximal coordinate $(a-1)$ which appears with multiplicity at least 2. 

Finally, we get the following decomposition of $\HFL$ as an $R$--module:
$$
\HFL(K_{rm,rn})=\bigoplus_{k:0\le \beta(k)<r-1\atop \beta(k+1)<r-1}M_{\beta(k)}\oplus \bigoplus_{a,b:\beta(a-1)=r-2\atop
{\beta(b+1)<r-1\atop \beta([a,b])=r-1}}M_{r-1,b-a+1}\oplus M_{r-1,\infty}.
$$




\end{proof}

Note that for $r=1$ we get $M_{0,l}\simeq \F[U_1]/(U_1^l)$ and $M_{0,+\infty}\simeq \F[U]$.

\subsection{Spectral sequence for \texorpdfstring{$\HFLhat$}{HFL-hat}}

\begin{theorem}
\label{diagonal degeneration}
If $\beta(k)+\beta(k+1)\le r-2$ then the spectral sequence for $\HFLhat(K_{rm,rn},k,\ldots,k)$ degenerates at the $\widehat{E_2}$ page and
$$
\HFLhat(K_{rm,rn},k,\ldots,k)\simeq \bigoplus_{i=0}^{\beta(k)}\binom{r-1}{i}\F_{-2\hh(k)-i}\oplus
\bigoplus_{i=0}^{\beta(k+1)}\binom{r-1}{i}\F_{-2\hh(k)+2-r+i}.
$$
\end{theorem}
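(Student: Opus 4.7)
The plan is to run the spectral sequence of Proposition \ref{spectral for hat} at $v=(k,\ldots,k)$:
\[
\widehat{E}_1=\bigoplus_{B\subset\{1,\ldots,r\}}\HFL(K_{rm,rn},v+e_B)\Longrightarrow \HFLhat(K_{rm,rn},k,\ldots,k),
\]
with $\widehat{\partial}_1$ induced by the action of the $U_i$. By Theorem \ref{homology}, $E_B:=\HFL(v+e_B)$ is nonzero precisely when $B=\emptyset$ with $\beta(k)\ge 0$, or when $|B|\ge r-\beta(k+1)$. The hypothesis $\beta(k)+\beta(k+1)\le r-2$ gives $r-\beta(k+1)\ge \beta(k)+2\ge 2$, so $E_B=0$ for every $|B|=1$; consequently $E_\emptyset$ is decoupled from the ``upper part'' $\bigoplus_{|B|\ge r-\beta(k+1)} E_B$ of the cube. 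The isolated vertex $E_\emptyset$ therefore survives unchanged to $\widehat{E}_\infty$ and produces the first summand $\bigoplus_{i=0}^{\beta(k)}\binom{r-1}{i}\F_{-2\hh(k)-i}$ of the theorem.

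It remains to compute the homology of the upper subcube. Using Theorem \ref{same max}, each $E_B$ in this range is identified, up to the homological shift $[-2\hh(k+1)]$, with the Alexander-graded piece $[\cAr_r/\cI_{\beta(k+1)}](-e_{B^c})$ of the module $\cAr_r/\cI_{\beta(k+1)}$, on which $\widehat{\partial}_1$ becomes the ordinary $R$-module multiplication by $U_i$. Setting $C=B^c$, the upper subcube becomes the truncated Koszul-type complex
\[
\Bigl(\bigoplus_{|C|\le\beta(k+1)}[\cAr_r/\cI_{\beta(k+1)}](-e_C),\ d\Bigr),\qquad d|_{(-e_C)}=\sum_{i\notin C}U_i,
\]
where the truncation at $|C|=\beta(k+1)$ is automatic since $\prod_{i\in C}U_i\in\cI_{\beta(k+1)}$ as soon as $|C|=\beta(k+1)+1$.

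To evaluate this complex, I will use the tensor-product description $E_B\simeq \cE_{r-|B|}\otimes_\F H_*(\cE_{|B|}[U],\partial^{(\beta(k+1)-r+|B|+1)})$ extracted from the proof of Theorem \ref{homology}. The differential $d$ restricts, on the $\cE_{r-|B|}$ factor, to the cube differential of Lemma \ref{Ker d}, while on the truncated-exterior factor it acts by raising the truncation level. An induction on $|B|$ from the top ($|B|=r$) downward, using Lemmas \ref{Ker d} and \ref{homology d b} together with the K\"unneth formula, shows that the differentials are injective at all levels $|B|>r-\beta(k+1)$, so the homology is concentrated at the bottom level $|B|=r-\beta(k+1)$ with multiplicities $\binom{r-1}{i}$ for $i=0,\ldots,\beta(k+1)$. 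Combining these internal degrees with the cube-degree convention on $\widehat{E}_1$ (which contributes a shift of $-|B|$ in order that $\widehat{\partial}_1$ have total degree $-1$) and the identity $\hh(k+1)=\hh(k)-1-\beta(k+1)$ that follows from \eqref{def hh} converts the answer into the stated degrees $-2\hh(k)+2-r+i$ of the second summand.

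Finally, degeneration at $\widehat{E}_2$ follows from a grading argument analogous to the one concluding the proof of Theorem \ref{homology}: the surviving classes live in only two cube positions, $0$ and $r-\beta(k+1)$, and within each they lie on a single slope-$-2$ line in the (cube, internal) bigrading, so no higher differential $d_s$ of bidegree $(-s,s-1)$ can act nontrivially. The main obstacle I anticipate is establishing the injectivity of the Koszul-type differentials above the bottom level and hence the concentration of the homology at the correct multiplicities and degrees; this is where the combinatorics of $\cAr_r/\cI_{\beta(k+1)}$ and the tensor-product structure of the $E_B$ need to be fully exploited.
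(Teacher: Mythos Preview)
Your overall strategy matches the paper's: run the spectral sequence of Proposition \ref{spectral for hat} at $v=(k,\ldots,k)$, identify the nonzero $\widehat{E}_1$ terms via Theorem \ref{homology}, compute $\widehat{E}_2$ using Theorem \ref{same max}, and then argue degeneration. The identification of $\widehat{E}_2$ in cube degrees $0$ and $r-\beta(k+1)$ is correct and is exactly what the paper obtains.

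The genuine gap is in your degeneration argument. The slope-$(-2)$ reasoning you borrow from the proof of Theorem \ref{homology} does not transfer to this spectral sequence. In that proof, every $E_2$ class of cube degree $x$ has internal degree exactly $-2h(v)-2x$, so all classes lie on a single line. Here, however, the classes at cube degree $0$ occupy a range of Maslov degrees $-2\hh(k),\ldots,-2\hh(k)-\beta(k)$, and likewise at cube degree $r-\beta(k+1)$; they do not lie on one line, so your argument does not exclude the differential $\widehat{\partial}_{r-\beta(k+1)}$. A telling symptom is that your degeneration step never uses the hypothesis $\beta(k)+\beta(k+1)\le r-2$, and degeneration genuinely fails without it (Example \ref{ex:degeneration a}, $n=2$). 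The paper's argument is instead a direct Maslov count: $\widehat{\partial}_{r-\beta(k+1)}$ must drop the Maslov grading by $r-\beta(k+1)+1$, whereas the maximal Maslov grading in $\widehat{E_2^{r-\beta(k+1)}}$ is $-2\hh(k)+2$ and the minimal in $\widehat{E_2^{0}}$ is $-2\hh(k)-\beta(k)$, so the largest possible drop is $\beta(k)+2$; the hypothesis gives $r-\beta(k+1)+1>\beta(k)+2$. Relatedly, your early claim that $E_\emptyset$ ``survives unchanged to $\widehat{E}_\infty$'' because $E_B=0$ for $|B|=1$ only shows that $\widehat{\partial}_1$ misses $E_\emptyset$; survival to $\widehat{E}_\infty$ must wait for the (corrected) degeneration argument.
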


\begin{proof}
By Proposition \ref{spectral for hat}, for a given $v$ there is a spectral sequence with $\widehat{E_1}$ page 
$$
\widehat{E_1}=\bigoplus_{B\subset \{1,\ldots,r\}}\HFL(L,v+e_B)
$$ 
and converging to $\widehat{E}_{\infty}=\HFLhat(L,v)$. If $v=(k,\ldots,k)$ then (for $B\neq \emptyset$)
the maximal coordinate of  $v+e_B$ equals $k+1$ and appears with multiplicity $\lambda=|B|$. Therefore, by Theorem \ref{homology}
$\HFL(L,v+e_B)$ does not vanish if and only if either $B=\emptyset$ or $|B|\ge r-\beta(k+1)$, and it is given by Theorem \ref{homology}. By \eqref{def hh} we have $\hh(k+1)=\hh(k)-\beta(k+1)-1.$

The spectral sequence is bigraded by the homological (Maslov) grading at each vertex of the cube and the ``cube grading'' $|B|$. 
The differential $\widehat{\partial_1}$ acts along the edges of the cube, and decreases the Maslov grading by $2$ and the cube grading by 1.

One can check using Theorem \ref{same max}  that its homology $\widehat{E_2}$ does not vanish in cube degrees $0$ and $r-\beta(k+1)$, so one can write $\widehat{E_2}=\widehat{E_2^{0}}\oplus \widehat{E_2^{r-\beta(k+1)}},$ and
$$
\widehat{E_2^{0}}\simeq \bigoplus_{i=0}^{\beta(k)}\binom{r-1}{i}\F_{-2\hh(k)-i},\qquad
\widehat{E_2^{r-\beta(k+1)}}\simeq \bigoplus_{i=0}^{\beta(k+1)}\binom{r-1}{i}\F_{-2\hh(k+1)-3\beta(k+1)+i}.
$$
By \eqref{def hh} we have $\hh(k+1)=\hh(k)-\beta(k+1)-1,$ so $-2\hh(k+1)-3\beta(k+1)+i=-2\hh(k)+2-\beta(k+1)+i$.

A higher  differential $\widehat{\partial_s}$ decreases the cube grading by $s$ and decreases the Maslov grading by $s+1$. 
Therefore the only nontrivial higher differential is $\widehat{\partial_{r-\beta(k+1)}}$ which vanishes by degree reasons too. Indeed, the maximal Maslov grading in $\widehat{E_2^{r-\beta(k+1)}}$ equals $-2\hh(k)+2$ while the minimal Maslov grading in 
 $\widehat{E_2^{0}}$ equals $-2\hh(k)-\beta(k)$, so the differential can decrease the Maslov grading at most by $\beta(k)+2$.
On the other hand, $\widehat{\partial_{r-\beta(k+1)}}$ drops it by $r-\beta(k+1)+1$, and  for $\beta(k)+\beta(k+1)<r-1$ one has $r-\beta(k+1)+1>\beta(k)+2$.  Therefore $\widehat{\partial_{r-\beta(k+1)}}=0$ and the spectral sequence vanishes at the $\widehat{E_2}$ page.
\end{proof}

We illustrate the proof of Theorem \ref{diagonal degeneration} by Examples \ref{ex:degeneration a} and \ref{ex:degeneration b}

\begin{lemma}
\label{symmetry for beta}
The following identity holds:
$$
\beta(1-k)+\beta(k)=r-2.
$$
\end{lemma}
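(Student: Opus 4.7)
The plan is to reduce the identity to a symmetry statement for $\hh$, which is already available from Lemma \ref{symmetry for h} combined with Lemma \ref{h on diagonal}. First I would unpack the definition $\beta(k) = \hh(k-1) - \hh(k) - 1$ to write
\[
\beta(1-k) + \beta(k) = \bigl(\hh(-k) - \hh(1-k)\bigr) + \bigl(\hh(k-1) - \hh(k)\bigr) - 2.
\]
Thus it suffices to show that $\hh(-k) - \hh(1-k) + \hh(k-1) - \hh(k) = r$.

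Next I would invoke the diagonal symmetry. By Lemma \ref{h on diagonal}, $\hh(j) = h_{rm,rn}(j,\ldots,j)$, and by Lemma \ref{symmetry for h} applied to $v = (j,\ldots,j)$,
\[
\hh(-j) = h_{rm,rn}(-j,\ldots,-j) = h_{rm,rn}(j,\ldots,j) + rj = \hh(j) + rj.
\]
Setting $j = k$ gives $\hh(-k) - \hh(k) = rk$, while setting $j = k-1$ gives $\hh(1-k) - \hh(k-1) = r(k-1)$, i.e.\ $\hh(k-1) - \hh(1-k) = -r(k-1)$. Adding these two relations yields
\[
\hh(-k) - \hh(k) + \hh(k-1) - \hh(1-k) = rk - r(k-1) = r,
\]
so $\beta(1-k) + \beta(k) = r - 2$ as claimed. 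There is no real obstacle here; the only point to be careful about is matching signs and shifts between the two applications of the symmetry formula, which is a routine bookkeeping step.
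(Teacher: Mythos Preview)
Your proof is correct and follows essentially the same approach as the paper: unpack the definition of $\beta$, then apply the diagonal symmetry $\hh(-j)=\hh(j)+rj$ (obtained from Lemma~\ref{symmetry for h} via Lemma~\ref{h on diagonal}) at $j=k$ and $j=k-1$ and combine. The only cosmetic difference is that the paper writes the argument in terms of $h_{rm,rn}(k,\ldots,k)$ rather than $\hh(k)$.
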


\begin{proof}
By \eqref{def hh} and Lemma \ref{h on diagonal} we have
$$\beta(k)=h(k-1,\ldots,k-1)-h(k,\ldots,k)-1,\ \beta(1-k)=h(-k,\ldots,-k)-h(1-k,\ldots,1-k)-1.$$
By Lemma \ref{symmetry for h} we have
$$h(-k,\ldots,-k)=h(k,\ldots,k)+kr,\ h(1-k,\ldots,1-k)=h(k-1,\ldots,k-1)+r(k-1).$$
These two identities imply the desired statement.
\end{proof}

\begin{theorem}
\label{diagonal dual}
If $\beta(k)+\beta(k+1)\ge r-2$ then:
$$
\HFLhat(K_{rm,rn},k,\ldots,k)\simeq \bigoplus_{i=0}^{r-2-\beta(k+1)}\binom{r-1}{i}\F_{-2\hh(k)-i}\oplus
\bigoplus_{i=0}^{r-2-\beta(k)}\binom{r-1}{i}\F_{-2\hh(k)+2-r+i}
$$
\end{theorem}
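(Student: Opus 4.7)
The plan is to deduce Theorem \ref{diagonal dual} from Theorem \ref{diagonal degeneration} via the conjugation symmetry of link Floer homology, which for any link $L\subset S^3$ asserts that $\HFLhat_d(L,v)\simeq \HFLhat_{d-2|v|}(L,-v)$. The point is that the symmetries of the $h$-function and of $\beta$ established in Lemmas \ref{symmetry for h} and \ref{symmetry for beta} interchange the two hypotheses $\beta(k)+\beta(k+1)\le r-2$ and $\beta(k)+\beta(k+1)\ge r-2$ under the involution $k\mapsto -k$.

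First I would verify the hypothesis converts correctly. By Lemma \ref{symmetry for beta} applied at $k+1$ and at $k$, we have $\beta(-k)=r-2-\beta(k+1)$ and $\beta(-k+1)=r-2-\beta(k)$, hence
\[
\beta(-k)+\beta(-k+1) = 2(r-2) - (\beta(k)+\beta(k+1))\le r-2
\]
precisely when $\beta(k)+\beta(k+1)\ge r-2$. Thus the diagonal grading $(-k,\ldots,-k)$ lies in the regime where Theorem \ref{diagonal degeneration} applies.

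Next I would compute the relevant grading shifts. Combining Lemma \ref{h on diagonal} with Lemma \ref{symmetry for h} applied to $v=(k,\ldots,k)$ gives $\hh(-k)=\hh(k)+rk$. Applying Theorem \ref{diagonal degeneration} to the diagonal $(-k,\ldots,-k)$ and substituting these identities yields
\[
\HFLhat(K_{rm,rn},-k,\ldots,-k)\simeq \bigoplus_{i=0}^{r-2-\beta(k+1)}\binom{r-1}{i}\F_{-2\hh(k)-2rk-i}\oplus \bigoplus_{i=0}^{r-2-\beta(k)}\binom{r-1}{i}\F_{-2\hh(k)-2rk+2-r+i}.
\]
Finally, the conjugation symmetry transports this to the diagonal $(k,\ldots,k)$: with $|(-k,\ldots,-k)|=-rk$, every Maslov grading shifts upward by $2rk$, cancelling the $-2rk$ in each summand and producing exactly the decomposition claimed in Theorem \ref{diagonal dual}.

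The main point to verify is the precise form of the conjugation symmetry, with the Maslov grading shift $2|v|$ compatible with both the lattice $\HH(L)$ and the normalization chosen in the paper (e.g. the factor $(t_1\cdots t_r)^{1/2}$ in the Euler characteristic). This is a standard property of link Floer homology from Ozsv\'ath--Szab\'o, but the conventions should be checked. A self-contained alternative is to rerun the spectral sequence argument of Theorem \ref{diagonal degeneration} directly: in the regime $\beta(k)+\beta(k+1)\ge r-2$ the non-vanishing cube degrees on $\widehat{E}_1$ are $\{0\}\cup\{r-\beta(k+1),\ldots,r\}$, so $\widehat{E}_2$ must be computed more carefully via Theorem \ref{same max} and one must argue that all higher differentials still vanish; this is combinatorially more involved but structurally parallel.
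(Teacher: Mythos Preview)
Your proposal is correct and follows essentially the same route as the paper: use Lemma \ref{symmetry for beta} to land in the regime of Theorem \ref{diagonal degeneration} at the grading $(-k,\ldots,-k)$, then transport back via the conjugation symmetry $\HFLhat_{\bullet}(L,k,\ldots,k)\simeq \HFLhat_{\bullet-2kr}(L,-k,\ldots,-k)$ (which the paper cites as \cite[Proposition 8.2]{OSlinks}) together with $\hh(k)=\hh(-k)-kr$ from Lemma \ref{symmetry for h}. Your caution about checking the precise Maslov shift in the conjugation symmetry is reasonable, but the form you wrote is exactly the one the paper uses.
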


\begin{proof}
By Lemma \ref{symmetry for beta} we get $\beta(-k)=r-2-\beta(k+1)$ and $\beta(1-k)=r-2-\beta(k)$, so
$$
\beta(k)+\beta(k+1)+\beta(-k)+\beta(1-k)=2(r-2),
$$
so  $\beta(-k)+\beta(1-k)\le r-2$.
By Theorem \ref{diagonal degeneration} the spectral sequence degenerates for $\HFLhat(-k,\ldots,-k)$ and 
$$
\HFLhat(K_{rm,rn},-k,\ldots,-k)\simeq \bigoplus_{i=0}^{r-2-\beta(k+1)}\binom{r-1}{i}\F_{-2\hh(-k)-i}\oplus
\bigoplus_{i=0}^{r-2-\beta(k)}\binom{r-1}{i}\F_{-2\hh(-k)+2-r+i}
$$
Finally, by \cite[Proposition 8.2]{OSlinks} we have
$$
\HFLhat_{\bullet}(K_{rm,rn},k,\ldots,k)=\HFLhat_{\bullet-2kr}(K_{rm,rn},-k,\ldots,-k)
$$
and by  Lemma \ref{symmetry for h} $\hh(k)=\hh(-k)-kr$.
\end{proof}

\begin{theorem}
\label{off diagonal}
Off-diagonal homology groups are supported on the union of the unit cubes along the diagonal.
In such a cube with corners $(k,\ldots,k)$ and $(k+1,\ldots,k+1)$ one has 
$$\HFLhat(K_{rm,rn},(k-1)^{j},k^{r-j})\simeq \binom{r-2}{\beta(k)}\F_{-2\hh(k)-\beta(k)-j}.$$
\end{theorem}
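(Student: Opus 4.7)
My approach is to run the spectral sequence of Proposition \ref{spectral for hat} at $v = ((k-1)^j, k^{r-j})$, in direct analogy with the proofs of Theorems \ref{diagonal degeneration} and \ref{diagonal dual}. The statement packages two claims: the support claim (off-diagonal $\HFLhat$ vanishes away from the unit cubes along the diagonal) and the specific value computation on those cubes. I would handle both in parallel by analyzing the $\widehat{E_1}$-page term by term.

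First I would enumerate the nonzero $\widehat{E_1}$ terms $\HFL(K_{rm,rn}, v+e_B)$. Decomposing each $B \subset \{1,\ldots,r\}$ as $B = B_1 \sqcup B_2$ along the partition of indices into ``low'' positions $\{1,\ldots,j\}$ (where $v_i = k-1$) and ``high'' positions $\{j+1,\ldots,r\}$ (where $v_i = k$), the vector $v+e_B$ has $\max = k$ of multiplicity $(r-j)+|B_1|$ when $B_2 = \emptyset$, and $\max = k+1$ of multiplicity $|B_2|$ otherwise. Theorem \ref{homology} then gives $\HFL(K_{rm,rn}, v+e_B) \neq 0$ precisely when either $B_2 = \emptyset$ and $|B_1| \ge j - \beta(k)$, or $B_2 \neq \emptyset$ and $|B_2| \ge r - \beta(k+1)$. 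For the support claim, the same analysis applied to a grading $v$ not lying on any diagonal unit cube would force \emph{all} such $B$-contributions to fail these inequalities, yielding $\widehat{E_1} = 0$ and therefore $\HFLhat(K_{rm,rn},v) = 0$.

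For the value computation I would use Theorem \ref{same max} to realize each surviving $\widehat{E_1}$ term as a graded piece of $\cAr_r/\cI_\beta$ and Proposition \ref{U on HFL} to identify $\widehat{\partial_1}$ with the combinatorial $U_i$-action there. After reindexing, this presents $(\widehat{E_1},\widehat{\partial_1})$ as a tensor product of an ordinary Koszul complex on the low factor (with $\binom{j-1}{i}$-dimensional homology, by Lemma \ref{Ker d}) and a truncated Koszul complex on the high factor (with homology described by Lemma \ref{homology d b}). A Vandermonde-type summation $\sum_{a}\binom{j-1}{a}\binom{r-j-1}{\beta(k)-a} = \binom{r-2}{\beta(k)}$ should then collapse $\widehat{E_2}$ to a single bidegree of total dimension $\binom{r-2}{\beta(k)}$. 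The Maslov grading $-2\hh(k)-\beta(k)-j$ is read off by tracking $h$-function values via Lemma \ref{lem:h for nonzero} together with the internal grading shifts of the Koszul factors. Since only one bidegree survives on $\widehat{E_2}$, all higher differentials must vanish and $\widehat{E_\infty} = \widehat{E_2}$ immediately.

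I expect the main obstacle to be the Vandermonde collapse: one must verify that the grading shift $\hh(k+1) = \hh(k)-\beta(k+1)-1$ conspires with the cube-degree shifts of the $B_2 \neq \emptyset$ vertices to land those contributions on the same bidegree as the $B_2 = \emptyset$ contributions, so that the resulting binomial sum telescopes precisely to $\binom{r-2}{\beta(k)}$. This ``collision'' of contributions into a single bidegree is the combinatorial heart of the argument and is what makes the off-diagonal formula dramatically simpler than the diagonal one.
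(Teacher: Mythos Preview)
Your overall strategy matches the paper's: run the $\widehat{E_1}\Rightarrow\HFLhat$ spectral sequence, compute $\widehat{E_2}$, observe that everything sits in a single \emph{total} homological degree so higher differentials vanish, and sum via Vandermonde. Two points need correction.

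\textbf{The support claim.} Your assertion that $\widehat{E_1}=0$ for $v$ off the diagonal unit cubes is false. Take $r=2$, $v=(k-2,k)$ with $\beta(k)\ge 1$: then $\HFL(v)\ne 0$ by Theorem~\ref{homology}, so $\widehat{E_1}\ne 0$. The paper instead argues $\widehat{E_2}=0$: if $v$ is off every unit diagonal cube there exist indices with $v_i\le v_{i'}-2$, and then for \emph{every} $B$ the coordinate $(v+e_B)_i$ is strictly below $\max(v+e_B)$; hence $U_i:\HFL(v+e_B)\to\HFL(v+e_{B\setminus\{i\}})$ preserves both $\max$ and its multiplicity $\lambda$, so by Theorem~\ref{homology} it is a surjection between equidimensional spaces, i.e.\ an isomorphism. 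The $i$-direction contracts the whole cube.

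\textbf{The $B_2\ne\emptyset$ vertices.} These do \emph{not} land in the Vandermonde sum; they cancel on $\widehat{E_2}$ by the same mechanism. Since $\widehat{\partial_1}$ can only decrease $|B_2|$, the vertices with $B_2\ne\emptyset$ form a subcomplex $C$ with quotient $C_{B_2=\emptyset}$. For any $i\in\{1,\dots,j\}$ (which exists since $j\ge 1$) and any $B$ with $B_2\ne\emptyset$, both $v+e_B$ and $v+e_{B\setminus\{i\}}$ have $\max=k+1$ with multiplicity $|B_2|$, so again $U_i$ is an isomorphism and $C$ is acyclic. Thus $\widehat{E_2}=H_*(C_{B_2=\emptyset})$, and the paper's formula
\[
\widehat{E_2^{\,j-c}}\simeq \binom{j-1}{c}\binom{r-1-j}{\beta(k)-c}\,\F_{-2\hh(k)-\beta(k)-c}
\]
comes entirely from the $\max=k$ layer. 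The collision you anticipate is not between $B_2=\emptyset$ and $B_2\ne\emptyset$ pieces, but among the various cube degrees $j-c$ of $C_{B_2=\emptyset}$, which all carry total degree $-2\hh(k)-\beta(k)-j$; this is what kills the higher differentials and makes the Vandermonde sum $\sum_c\binom{j-1}{c}\binom{r-1-j}{\beta(k)-c}=\binom{r-2}{\beta(k)}$ the final answer.
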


\begin{proof}
We use the spectral sequence from $\HFL$ to $\HFLhat$. 
By Theorem \ref{same max}, all the $\widehat{E_2}$ homology outside the union of these cubes vanish (since some $U_i$ would provide an isomorphism 
between $\HFL(K_{rm,rn},v)$ and $\HFL(K_{rm,rn},v-e_i)$).
Furthermore, if $\beta(k)=r-1$ then the homology in the cube vanish too, so we can focus on the case $\beta(k)\le r-2$.

One can check that $\widehat{E_2}$ does not vanish in cube degrees $j-\beta(k),\ldots,j$ and 
$$
\widehat{E_2^{j-c}}\simeq \binom{j-1}{c}\binom{r-1-j}{\beta(k)-c}\F_{-2\hh(k)-\beta(k)-c}.
$$
Note that the {\em total} homological degree on $\widehat{E_2^{j-c}}$ equals $-2\hh(k)-\beta(k)-j$ and does not depend on $c$.
Therefore all higher differentials in the spectral sequence must vanish and the rank of $\HFLhat$ equals:
$$
\sum_{c=0}^{\beta}\binom{j-1}{c}\binom{r-1-j}{\beta(k)-c}=\binom{r-2}{\beta(k)}.
$$
\end{proof}

We illustrate this proof by Example \ref{ex:off diagonal}.
  
\subsection{Special case: \texorpdfstring{$m=1,\ n=2g(K)-1$}{m=1, n=2g(K)-1}}

The case $m=1, n=2g(K)-1$ is special since Lemma \ref{HFL after l} is not always true.
Indeed, $K_{m,n}=K$ and $l=n=2g(K)-1$, but for $v=g(K)-l=1-g(K)$ we have $\HFL(K,v)=0$.
However, it is clear that in all other cases Lemma \ref{HFL after l} is true, so for generic $v$ Lemmas \ref{lem:zero} and
\ref{lem:nonzero} hold true. This allows one to prove an analogue of Theorem \ref{homology}.   

\begin{theorem}
\label{homology special}
Assume that $m=1,n=2g(K)-1$ (so $l=2g(K)-1$) and suppose that $v=(u_1^{\lambda_1},u_2^{\lambda_2},\ldots,u_s^{\lambda_s})$ where $u_1<\ldots <u_s$.
Then the Heegaard-Floer homology group $\HFL(K_{rm,rn},v)$ can be described as following:
\begin{enumerate}[label=(\alph*)]
\item \label{it:special} Assume that $u_s-c+l(r-\lambda_s)=g(K)-\nu l$ with $1\le \nu\le \lambda_s$. Then
$$
\HFL(K_{rm,rn},v)\simeq (\F_{(0)}\oplus \F_{(-1)})^{r-\lambda_s}\otimes \left[\bigoplus_{j=0}^{\nu-2}\binom{\lambda_s-1}{j}\F_{(-2h(v)-j)}\oplus\binom{\lambda_s-1}{\nu}\F_{(-2h(v)+2-\nu)}\right]$$

\item \label{it:generic} In all other cases, the homology is given by Theorem \ref{homology}.
\end{enumerate}
\end{theorem}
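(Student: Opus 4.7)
The plan is to follow the spectral-sequence strategy of Theorem \ref{homology}, pinpointing exactly where Lemma \ref{HFL after l} is invoked and modifying the argument where it fails. In the special case $m=1,\, n = 2g(K)-1$ we have $l = 2g(K_{m,n})-1$, so Lemma \ref{HFL after l} still yields $\HFK(K_{m,n}, v) \simeq \F$ for $v \leq -g(K_{m,n})$, but not at the single boundary value $v = 1-g(K_{m,n})$, where $\HFK$ may vanish. This is the only potential obstruction to the proof of Theorem \ref{homology}. The Alexander gradings covered by case (a) are precisely those for which one of the shift inputs $u_s - c + l(r-\lambda_s) + lj$ (for $j = 0, \ldots, \lambda_s-1$) equals $1-g(K_{m,n})$: solving $u_s - c + l(r-\lambda_s) + lj = g(K) - l$ gives $\nu = j+1 \in [1, \lambda_s]$. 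Otherwise, a short arithmetic check (using $v_i \leq u_s-1$ for $i \leq r-\lambda_s$) shows the remaining inputs $v_i - c + l(i-1)$ lie strictly below $-g(K_{m,n})$, so Lemmas \ref{lem:zero} and \ref{lem:nonzero} still apply and Theorem \ref{homology} carries over to give case (b).

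For case (a), the index $j = \nu-1$ hits the problematic value, and the missing jump at $\HFK(K, 1-g(K)) = 0$ modifies the incremental behavior of the $h$-function. Rerunning the proof of Lemma \ref{lem:nonzero} with this gap, the modified increment
\[
d(k) := h_{rm,rn}(v - e_{B''}) - h_{rm,rn}(v), \qquad B'' \subseteq \{r-\lambda_s+1, \ldots, r\},\; |B''| = k,
\]
equals $k$ for $k \leq \nu-1$, equals $\nu-1$ for $k = \nu$, and equals $\nu$ for $k \geq \nu+1$. Consequently on the diagonal factor $\cE_{\lambda_s}[U]$ of the $E_1$ page, the differential carries $U$-multiplicity $a(k) = d(k) - d(k-1)$ equal to $1$ for $k \in \{1, \ldots, \nu-1\} \cup \{\nu+1\}$ and equal to $0$ for $k \in \{\nu\} \cup \{\nu+2, \ldots, \lambda_s\}$, a ``gapped truncation'' rather than the standard $b$-truncation of Lemma \ref{homology d b}. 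The below-diagonal factor $\cE_{r-\lambda_s}[U]$ is unchanged and contributes $(\F_{(0)} \oplus \F_{(-1)})^{r-\lambda_s}$ exactly as in Theorem \ref{homology}.

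The main computation is the homology of the gapped cube complex on $\cE_{\lambda_s}[U]$, which I would extract by splitting it as a mapping cone of standard $(\nu-1)$-truncated differentials on either side of the gap at cube degree $\nu$: Lemma \ref{homology d b} applied to the lower piece gives classes of rank $\binom{\lambda_s-1}{j}$ at cube degrees $j = 0, \ldots, \nu-2$, while the failure of $U$ to appear on the edge at cube degree $\nu$ leaves an extra kernel family of rank $\binom{\lambda_s-1}{\nu}$ in cube degree $\nu$, at Maslov grading $\nu - 2(h(v) + d(\nu)) = -2h(v) + 2 - \nu$. Collapse at $E_2$ then follows from the same bidegree argument as in Theorem \ref{homology}: the low-degree classes lie on a line of slope $-2$ in the (cube, Maslov) plane, the extra class sits two units above this line, and higher differentials $\partial_k$ of bidegree $(-k, k-1)$ cannot connect them for degree reasons. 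The main technical obstacle will be setting up the mapping-cone decomposition of the gapped complex cleanly and tracking the extra generator through it.
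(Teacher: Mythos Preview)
Your proposal is correct and follows essentially the same route as the paper: identify the single failure point $w=1-g(K)$ of Lemma~\ref{HFL after l}, split into the generic case (b) where Lemmas~\ref{lem:zero}--\ref{lem:nonzero} survive verbatim, and in case (a) compute the modified weight $d(|B''|)$ exactly as you do (the paper calls it $w(B'')$), then read off the deformed cube homology and argue collapse at $E_2$ by bidegree. Your mapping-cone language for the gapped complex is a slightly more explicit version of what the paper dispatches with ``similarly to Lemma~\ref{homology d b}''; both yield the kernel of $\partial$ in cube degrees $0,\ldots,\nu-2$ and $\nu$, with the degree-$\nu$ piece sitting two Maslov units above the slope-$(-2)$ line, whence the same bidegree obstruction kills all higher differentials.
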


\begin{proof}
One can check that the proof of Lemma \ref{lem:zero} fails if $u_s-c+l(r-\lambda_s)=g(K)-l$, 
and remains true in all other cases. Similarly, the proof of Lemma \ref{lem:nonzero} fails  only if $u_s-c+l(r-\lambda_s)+lj=g(K)-l$
for $1\le j\le \lambda_s-1$, which is equivalent to $u_s-c+l(r-\lambda_s)=g(K)-(j+1)$. This proves \ref{it:generic}.

Let us consider the special case \ref{it:special}. Note that
$$
h_{m,n}(u_s-c+l(r-\lambda_s)+lj-1)-h_{m,n}(u_s-c+l(r-\lambda_s)+lj)=$$ $$\chi(\HFK(K,g(K)+l(j-\nu))=
\begin{cases}
1, & \text{if}\ j<\nu-1\\
0, & \text{if}\ j=\nu-1\\
1, & \text{if}\ j=\nu \\
0, &  \text{if}\ j>\nu .\\
\end{cases}
$$
Given a pair of subsets $B'\subset \{1,\ldots,r-\lambda_s\}$ and $B''\subset \{r-\lambda_s+1,\ldots,r\}$, one can
write, analogously to Lemma \ref{lem:nonzero}: 
$$h_{rm,rn}(v-e_{B'}-e_{B''})=h_{rm,rn}(v)+|B'|+w(B''),$$
where 
$$
w(B'')=\begin{cases}
|B''|, & \text{if}\ |B''|\le \nu-1\\
\nu-1, & \text{if}\ |B''|=\nu \\
\nu, & \text{if}\ |B''|>\nu.\\
\end{cases}
$$
By the K\"unneth formula, the $E_2$ page of the spectral sequence is determined by the ``deformed cube homology'' with the weight function $w(B'')$,
as in \eqref{d b}. If $\partial$, as above, denotes the standard cube differential, then, similarly to Lemma \ref{homology d b}, the homology of $\partial^{w}_U$ is isomorphic to the kernel of $\partial$
in cube degrees $0,\ldots \nu-2$ and $\nu.$ 

Finally, we need to prove that all higher differentials vanish. For a homology generator $\alpha$ on the $E_2$ page of cube degree $x$, its bidegree is equal either to 
$(x,-2h(v)-2x)$  or to $(x,-2h(v)-2x+2)$. The differential $\partial_k$ has bidegree $(-k,k-1)$ (see Remark \ref{rem:degrees}), so the bidegree of $\partial_k(\alpha)$
is equal either to  $(x-k,-2h(v)-2x+k-1)$  or to $(x-k,-2h(v)-2x+k+1)$. Since $-2x+k+1<-2(x-k)$ for $k>1$, we have $\partial_k(\alpha)=0$.
\end{proof}

The action of $U_i$ in this special case can be described similarly to Theorem \ref{same max}. However, it is not true that $U_i$ is surjective whenever it does not obviously vanish.
In particular, the following example shows that $\HFL$ may be not generated by diagonal classes, so Theorem \ref{th:splitting} does not hold. We leave the appropriate adjustment of 
Theorem \ref{th:splitting} as an exercise to a reader.

\begin{example}
Consider $T_{2,2}$, the $(2,2)$ cable of the trefoil. We have $g(K)=l=1$ and $c=1/2$, so by Theorem \ref{homology special}
$$
\HFL(T_{2,2},1/2,1/2)\simeq \F_{(-1)},\qquad \HFL(T_{2,2},-1/2,1/2)\simeq \F_{(-2)}\oplus \F_{(-3)}.
$$
Therefore $U_1$ is not surjective. Furthermore, the class in $\HFL(T_{2,2},-1/2,1/2)$ of homological degree $(-2)$
is not in the image of any diagonal class under the $R$--action.
\end{example} 

\section{Examples} \label{sec:examples}


\subsection{\texorpdfstring{$(n,n)$}{(n,n)} torus links}

The symmetrized multi-variable Alexander polynomial of the $(n,n)$ torus link equals (for $n>1$):
$$
\Delta_{T_{n,n}}(t_1,\ldots,t_n)=((t_1\cdots t_n)^{1/2}-(t_1\cdots t_n)^{-1/2})^{n-2}.
$$
Each pair of components has linking number 1, so $c=(n-1)/2$. 
The homology groups $\HFL(T(n,n),v)$ are described by the following theorem, which is a special case of Theorem \ref{homology}.

\begin{theorem}
\label{th: n n minus}
Consider the $(n,n)$ torus link, and an Alexander grading $v=(v_1,\ldots,v_n)$. Suppose that among the coordinates $v_i$ exactly $\lambda$ are equal to $k$ and all other coordinates are less than $k$. Let $|v|=v_1+\ldots+v_n$. Then
$$
\HFL(T(n,n),v)=\begin{cases}
0& \text{if}\ k>\lambda-\frac{n+1}{2},\\
(\F_{(0)}\oplus \F_{(-1)})^{n-1}\otimes \F_{2|v|}& \text{if}\ k<-\frac{n-1}{2},\\
(\F_{(0)}\oplus \F_{(-1)})^{n-\lambda}\otimes \bigoplus_{i=0}^{\lambda-\frac{n+1}{2}-k} \binom{\lambda-1}{i}\F_{(-2h(v)-i)}& \text{if}\  -\frac{n-1}{2}\le k\le \lambda-\frac{n+1}{2}, \\
\end{cases}
$$
where
$h(v)=\frac{1}{2}(\frac{n-1}{2}-k)(\frac{n-1}{2}-k+1)+kn-|v|$ in the last case.
\end{theorem}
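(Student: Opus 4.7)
The plan is to derive Theorem \ref{th: n n minus} as a direct specialization of Theorem \ref{homology}. One identifies $T(n,n)$ with the cable link $K_{rm,rn}$ where $K=U$ is the unknot, the cabling parameters (in the notation of Theorem \ref{homology}) are both $1$, and the number of components is $r=n$; then $l=1$, $c=(n-1)/2$, and the hypothesis $n/m>2g(U)-1$ reduces to $1>-1$, so Theorem \ref{homology} applies.

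The main computation is to make the functions $\beta(k)$ and $\hh(k)$ explicit in this case. Using the characterization established in the proof of \eqref{beta as max}, namely that $\beta(k)+1$ counts the indices $j\in[0,r-1]$ with $\HFK(K_{m,n},k-c+lj)\simeq\F$, together with the fact that $\HFK(U,x)\simeq\F$ iff $x\leq 0$, one obtains
\[
\beta(k)=\begin{cases} -1 & \text{if } k>(n-1)/2,\\ (n-1)/2-k & \text{if } -(n-1)/2\leq k\leq (n-1)/2,\\ n-1 & \text{if } k<-(n-1)/2.\end{cases}
\]
For $\hh(k)$, Lemmas \ref{h for link} and \ref{h on diagonal} reduce the calculation to summing $h_U(x)=\max(0,-x)$, yielding $\hh(k)=\tfrac12((n-1)/2-k)((n-1)/2-k+1)$ in the middle range and $\hh(k)=-kn$ in the bottom range.

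Substituting these expressions into Theorem \ref{homology} produces the three cases of Theorem \ref{th: n n minus}. The vanishing inequality $\beta(k)<r-\lambda$ becomes $k>\lambda-(n+1)/2$, giving the first (vanishing) case. In the middle range the formula of Theorem \ref{homology}(b) matches the middle case of the theorem verbatim, with $h(v)=\hh(k)+kn-|v|$ coinciding with the closed form stated. In the bottom range $\beta(k)-r+\lambda=\lambda-1$, so the inner binomial sum rewrites as $(\F_{(0)}\oplus\F_{(-1)})^{\lambda-1}\otimes\F_{(-2h(v))}$, which combines with the $(\F_{(0)}\oplus\F_{(-1)})^{n-\lambda}$ factor to give $(\F_{(0)}\oplus\F_{(-1)})^{n-1}\otimes\F_{(-2h(v))}$; since $\hh(k)=-kn$ forces $h(v)=-|v|$, one obtains $-2h(v)=2|v|$, recovering the stated bottom case.

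No substantive obstacle is expected: once $\beta(k)$ and $\hh(k)$ are in hand, the deduction is mechanical. The one point meriting a bit of care is the rewriting in the bottom range, where one must recognize the binomial-weighted sum as a tensor product of copies of $\F_{(0)}\oplus\F_{(-1)}$ so that the result collapses to the clean form appearing in the theorem.
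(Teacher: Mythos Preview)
Your proposal is correct and follows essentially the same approach as the paper: both specialize Theorem \ref{homology} to the unknot with $r=n$, compute $\beta(k)$ and the $h$-function explicitly, translate the non-vanishing condition $\beta(k)\ge r-\lambda$ into the inequality $k\le\lambda-\tfrac{n+1}{2}$, and then simplify the bottom range by collapsing the binomial sum into $(\F_{(0)}\oplus\F_{(-1)})^{n-1}$. The only cosmetic difference is that the paper computes $h_{n,n}(v)$ directly from \eqref{h for link}, while you first compute $\hh(k)$ and then invoke $h(v)=\hh(k)+kn-|v|$; these are equivalent via Lemma \ref{lem:h for nonzero}.
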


\begin{proof}
Indeed, $\beta(k)=\frac{n-1}{2}-k$ for $k>-\frac{n-1}{2}$ and $\beta(k)=n-1$ for $k\le -\frac{n-1}{2}$.
By Theorem \ref{homology}, the homology group $\HFL(T(n,n),v)$ does not vanish if and only if
\begin{equation}
\label{n n semigroup}
k\le \lambda-\frac{n+1}{2}.
\end{equation}
If $k\ge -\frac{n-1}{2}$, equation \eqref{h for link} implies:
$$
h_{n,n}(v)=\frac{1}{2}\left(\frac{n-1}{2}-k\right)\left(\frac{n-1}{2}-k+1\right)+kn-|v|.
$$
If $k\le -\frac{n-1}{2}$, equation \eqref{h for link} implies $h_{n,n}(v)=-|v|$.
Furthermore, for all $v$ satisfying \eqref{n n semigroup} one has
$$
\HFL(T(n,n),v)=(\F_{(0)}\oplus \F_{(-1)})^{n-\lambda}\otimes \bigoplus_{j=0}^{\lambda-\frac{n+1}{2}-k}\binom{\lambda-1}{j}\F_{(-2h_{n,n}(v)-j)}.
$$
Finally, if $k-\frac{n-1}{2}$, then \eqref{n n semigroup} holds for all $\lambda$ and $\lambda-\frac{n+1}{2}-k>\lambda-1$, hence
$$
\HFL(T(n,n),v)=(\F_{(0)}\oplus \F_{(-1)})^{n-\lambda}\otimes \bigoplus_{j=0}^{\lambda-1}\binom{\lambda-1}{j}\F_{(-2h_{n,n}(v)-j)}=
(\F_{(0)}\oplus \F_{(-1)})^{n-1}\otimes \F_{(-2h_{n,n}(v))}.
$$
\end{proof}

\begin{remark}
One can check that, in agreement with \cite{gn}, the condition \eqref{n n semigroup} defines the multi-dimensional semigroup of the plane curve singularity $x^n=y^n$.
\end{remark}

\begin{corollary}
We have the following decomposition of $\HFL$ as an $R$-module: 
$$
\HFL(T(n,n))=M_{0}\oplus M_1\oplus M_2\oplus\ldots\oplus M_{n-2}\oplus M_{n-1,+\infty}.
$$
\end{corollary}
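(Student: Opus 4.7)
The plan is to apply Theorem \ref{th:splitting} to $T(n,n)$ viewed as the $n$-component cable of the unknot $K$ with $m=1$ and the second cable parameter also equal to $1$; the hypothesis $n/m = 1 > -1 = 2g(K)-1$ is trivially satisfied.

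First I would read off the $\beta$-function for $T(n,n)$ from the computation inside the proof of Theorem \ref{th: n n minus}: one has $\beta(k) = \frac{n-1}{2} - k$ for $-\frac{n-1}{2} < k \le \frac{n-1}{2}$, $\beta(k) = n-1$ for $k \le -\frac{n-1}{2}$, and $\beta(k) = -1$ for $k > \frac{n-1}{2}$. In particular, as $k$ decreases through the diagonal lattice, $\beta(k)$ takes each of the values $0, 1, \dots, n-2$ exactly once and then stabilizes at $n-1$ on the infinite ray $(-\infty, -\frac{n-1}{2}]$.

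Next I would substitute this into the explicit direct sum displayed at the end of the proof of Theorem \ref{th:splitting}:
$$\HFL(K_{rm,rn}) \;=\; \bigoplus_{k:\, 0 \le \beta(k) < r-1,\, \beta(k+1) < r-1} M_{\beta(k)} \;\oplus\; \bigoplus_{a,b} M_{r-1,\,b-a+1} \;\oplus\; M_{r-1,\infty}.$$
For $r = n$, the first sum runs over the $n-1$ diagonal gradings $k \in \{-\frac{n-3}{2}, \dots, \frac{n-1}{2}\}$, on which $\beta(k)$ takes each value in $\{0,1,\dots,n-2\}$ exactly once; this contributes $M_0 \oplus M_1 \oplus \cdots \oplus M_{n-2}$. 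The middle sum is empty because the set $\{k : \beta(k) = n-1\}$ has no finite lower endpoint, so no bounded interval of $\beta = n-1$ values arises. The remaining infinite ray on which $\beta = n-1$ produces the single summand $M_{n-1,\infty}$, and assembling these pieces yields the stated decomposition.

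I do not expect a substantive obstacle: the argument reduces to bookkeeping against the explicit $\beta$-function. The only things requiring verification are that every diagonal point with nonzero diagonal $\HFL$ is counted exactly once, and that no finite interval of $\beta = n-1$ values exists; both are immediate from the strict monotonicity of $k \mapsto \frac{n-1}{2}-k$ on its interval of finite values, together with the observation that $\beta(k) = -1$ contributes nothing.
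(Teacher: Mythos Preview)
Your proposal is correct and is exactly the intended argument: the paper states this corollary without proof, treating it as immediate from Theorem \ref{th:splitting} together with the $\beta$-function computed in the proof of Theorem \ref{th: n n minus}, and your write-up fills in precisely those details. The only point worth noting is that the paper's formula $\beta(k)=\frac{n-1}{2}-k$ for $k>-\frac{n-1}{2}$ is tacitly truncated at $-1$ for large $k$, which you have made explicit.
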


\noindent To prove Theorem \ref{th: n n hat}, we use Theorem \ref{hat homology}. 

\begin{proof}[Proof of Theorem \ref{th: n n hat}]
We have $\beta(\frac{n-1}{2}-s)=s$ for $s<n-1$, and  
$$
\beta(\frac{n-1}{2}-s)+\beta(\frac{n-1}{2}-s+1)=2s-1\le n-2\ \le s\le \frac{n-1}{2}.
$$
Therefore for $s\le \frac{n-1}{2}$ Theorem \ref{diagonal degeneration} implies the degeneration of the spectral sequence
from $\HFL$ to $\HFLhat$, and 
$$
\HFLhat\left(T(n,n),\frac{n-1}{2}-s,\ldots,\frac{n-1}{2}-s\right)=\bigoplus_{i=0}^{s} \binom{n-1}{i}\F_{(-s^2-s-i)}\oplus \bigoplus_{i=0}^{s-1} \binom{n-1}{i}\F_{(-s^2-s-n+2+i)}.
$$
\end{proof}

\noindent Let us illustrate the degeneration of the spectral sequence from $\HFL$ to $\HFLhat$ in some examples.

\begin{example}
\label{ex:degeneration a}
For $s=0$ we have $\widehat{E_1}=\widehat{E_2}=\F_{(0)}$. For $s=1$ the $\widehat{E_1}$ page has nonzero entries in cube degree $0$
where one gets 
$$\HFL\left(T(n,n),\frac{n-1}{2}-1,\ldots,\frac{n-1}{2}-1\right)\simeq \F_{(-2)}\oplus (n-1)\F_{(-3)},$$
and in cube degree $n$ where one gets $\F_{(0)}.$ Indeed, the differential $\widehat{\partial_1}$ vanishes, so for $n>2$
$$
\HFLhat \left(T(n,n),\frac{n-1}{2}-1,\ldots,\frac{n-1}{2}-1\right)\simeq \F_{(-2)}\oplus (n-1)\F_{(-3)}\oplus \F_{(-n)}.
$$
Note that for $n=2$ the differential $\widehat{\partial_{2}}$ does not vanish, so the bound $s\le \frac{n-1}{2}$ is indeed necessary for the spectral sequence to collapse at $\widehat{E_2}$ page. 
\end{example}

\begin{example}
\label{ex:degeneration b}
The case $s=2$ is more interesting. The $\widehat{E_1}$ page has nonzero entries in cube degree $0$, $n-1$ (where we have $n$ vertices) and $n$, where one has
$$
\widehat{E_1^{0}}=\F_{(-6)}\oplus(n-1)\F_{(-7)}\oplus\binom{n-1}{2}\F_{(-8)},\ \widehat{E_1^{n-1}}=n(\F_{(-4)}\oplus \F_{(-5)}),\ \widehat{E_1^{n}}=\F_{(-2)}\oplus (n-1)\F_{(-3)}.
$$
The differential $\widehat{\partial_1}$  cancels some summands in $\widehat{E_1^{n-1}}$ and $\widehat{E_1^{n}}$;
$$
\widehat{E_2^{0}}=\F_{(-6)}\oplus(n-1)\F_{(-7)}\oplus\binom{n-1}{2}\F_{(-8)},\ \widehat{E_2^{n-1}}=(n-1)\F_{(-4)}+\F_{(-5)}.
$$
For $n>4$ all higher differentials vanish and
\begin{multline}
\HFLhat\left(T(n,n),\frac{n-1}{2}-2,\ldots,\frac{n-1}{2}-2\right)\simeq\\ \F_{(-6)}\oplus(n-1)\F_{(-7)}\oplus\binom{n-1}{2}\F_{(-8)}\oplus (n-1)\F_{(-3-n)}+\F_{(-4-n)}.
\end{multline}
\end{example}

\noindent The following example illustrates the computation of $\HFLhat$ for the off-diagonal Alexander gradings.

\begin{example}
\label{ex:off diagonal}
Let us compute the homology $\HFLhat(T(n,n),v)$ for $v=(\frac{n-1}{2}-2)^{j}(\frac{n-1}{2}-1)^{n-j}$ ($1\le j\le n-1$) using the spectral sequence from $\HFL$.
In the $n$ dimensional cube $(v+e_B)$ almost all all vertices have vanishing $\HFL$, except for the vertex $(\frac{n-1}{2}-1,\ldots,\frac{n-1}{2}-1)$
$$
\HFL(\frac{n-1}{2}-1,\ldots,\frac{n-1}{2}-1)=F_{(-2)}\oplus (n-1)\F_{(-3)}
$$
and $j$ of its neighbors with homology $\F_{(-4)}\oplus \F_{(-5)}$.
Clearly, $\widehat{E_2}$ is concentrated in degrees $j$ (with homology $(n-1-j)\F_{(-3)}$) and
$(j-1)$ (with homology $(j-1)\F_{(-4)})$. Note that both parts contribute to the total degree $(-3-j)$,
so
$$
\HFLhat(T(n,n),v)=(n-1-j)\F_{(-3-j)}\oplus (j-1)\F_{(-3-j)}=(n-2)\F_{(-3-j)}.
$$ 
\end{example}

\noindent Finally, we draw all the homology groups $\HFL$ for $(2,2)$ and $(3,3)$ torus links.

\begin{example}
\label{ex:2 2 link}
For the Hopf link, one has two cases. If $v_1<v_2$, then the condition \eqref{n n semigroup} implies $v_2\le -1/2$. 
If $v_1=v_2$, then \eqref{n n semigroup} implies $v_2\ge 1/2$.

The nonzero homology of the Hopf link is shown in Figure \ref{fig:2 2 link} and Table \ref{tab:2 2 link}
\end{example}

\begin{figure}[ht]
\begin{tikzpicture}
\draw[dashed,->] (0,0)--(-6,0) node [below] {$v_1$};
\draw[dashed,->] (0,0)--(0,-6) node [right] {$v_2$};
 
\draw [step=1,dashed]  (-5,-5) grid (0,0);

\fill[fill=lightgray] (-1,-1)--(-5,-1)--(-5,-5)--(-1,-5)--(-1,-1);
\draw[ultra thick] (-5,-1)--(-1,-1)--(-1,-5);

\draw (0,0) node [circle,draw=black,fill=white] {$\F$};
\draw (-1,-1) node [circle,draw=black,fill=white] {$\F^2$};

\draw (0,0.7) node {$\frac{1}{2}$};
\draw (-1,0.5) node {$-\frac{1}{2}$};
\draw (-2,0.5) node {$-\frac{3}{2}$};
\draw (-3,0.5) node {$-\frac{5}{2}$};
\draw (-4,0.5) node {$-\frac{7}{2}$};

\draw (0.7,0) node {$\frac{1}{2}$};
\draw (0.5,-1) node {$-\frac{1}{2}$};
\draw (0.5,-2) node {$-\frac{3}{2}$};
\draw (0.5,-3) node {$-\frac{5}{2}$};
\draw (0.5,-4) node {$-\frac{7}{2}$};
\end{tikzpicture}
\caption{$\HFL$ for the (2,2) torus link: $\F^2$  on thick lines and in the grey region}
\label{fig:2 2 link}
\end{figure}
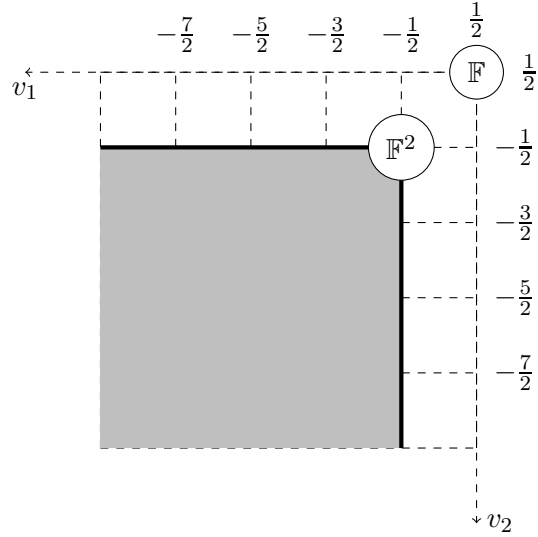

\begin{table}[ht]
\begin{tabular}{|c|c|}
\hline
Alexander grading & Homology\\
\hline
$(1/2,1/2)$ & $\F_{(0)}$\\
\hline
$(a,b)$, $a,b\le -1/2$& $\F_{(2a+2b)}\oplus \F_{(2a+2b-1)}$\\
\hline
\end{tabular}
\caption{Maslov gradings for the $(2,2)$ torus link}
\label{tab:2 2 link}
\end{table}

\begin{example}
\label{ex:3 3 link}
For the $(3,3)$ torus link, one has two cases. If $v_1\le v_2<v_3$, then the condition \eqref{n n semigroup} implies $v_3\le 1$. 
If $v_1<v_2=v_3$, then \eqref{n n semigroup} implies $v_3\le 0$. Finally, if $v_1=v_2=v_3$, then \eqref{n n semigroup} implies $v_3\le 1$.
In other words, nonzero homology appears at the point $(1,1,1)$, at three lines $(0,0,k),(0,k,0),(k,0,0)$ $(k\le 0)$ and at the octant $\max(v_1,v_2,v_3)\le -1$.

This homology is shown in Figure \ref{fig:3 3 link} and Table \ref{tab:3 3 link}.
\end{example}

\begin{figure}[ht]
\tdplotsetmaincoords{40}{-20}
\begin{tikzpicture}[tdplot_main_coords]
\draw[dashed,->] (0,0,0) -- (-10,0,0) node[anchor=north east]{$v_1$};
\draw[dashed,->] (0,0,0) -- (0,-15,0) node[anchor=north west]{$v_2$};
\draw[dashed,->] (0,0,0) -- (0,0,-20) node[anchor=south]{$v_3$};

\draw[dashed] (-2,0,0)--(-2,-2,0)--(0,-2,0);
\draw[dashed] (0,0,-2)--(-2,0,-2)--(-2,-2,-2)--(0,-2,-2)--(0,0,-2);
\draw[dashed] (-2,0,0)--(-2,0,-2);
\draw[dashed] (0,-2,0)--(0,-2,-2);
\draw[dashed] (-2,-2,0)--(-2,-2,-2);

\draw[dashed] (-2,-4,-2)--(-2,-2,-2)--(-4,-2,-2)--(-4,-4,-2); 
\draw[dashed] (-4,-2,-4)--(-4,-4,-4)--(-2,-4,-4)--(-2,-2,-4);
\draw[dashed] (-2,-2,-2)--(-2,-2,-4);
\draw[dashed] (-2,-4,-2)--(-2,-4,-4);
\draw[dashed] (-4,-2,-2)--(-4,-2,-4);
\draw[dashed] (-4,-4,-2)--(-4,-4,-4);
\draw[dashed] (-4,-2,-4)--(-2,-2,-4);

\draw [ultra thick,dashed] (-4,-2,-2)--(-10,-2,-2);
\draw[ultra thick,dashed] (-2,-4,-2)--(-2,-13,-2);
\draw [ultra thick,dashed] (-2,-2,-4)--(-2,-2,-20);

\fill[fill=lightgray] (-4,-4,-4)--(-10,-4,-4)--(-10,-4,-10)--(-4,-4,-10)--(-4,-4,-4);
\fill[fill=lightgray] (-10,-4,-4)--(-10,-4,-10)--(-10,-10,-10)--(-10,-10,-4)--(-10,-4,-4);
\fill[fill=lightgray] (-4,-4,-10)--(-10,-4,-10)--(-10,-10,-10)--(-4,-10,-10)--(-4,-4,-10);
\fill[fill=lightgray] (-4,-4,-4)--(-4,-10,-4)--(-4,-10,-10)--(-4,-4,-10)--(-4,-4,-4);
\draw[ultra thick] (-4,-4,-4)--(-10,-4,-4);
\draw[ultra thick] (-4,-4,-4)--(-4,-10,-4);
\draw[ultra thick] (-4,-4,-4)--(-4,-4,-10);
\draw  (-4,-10,-4)--(-4,-10,-10);
\draw [dashed] (-4,-4,-10)--(-4,-10,-10);
\draw [dashed] (-4,-4,-10)--(-10,-4,-10);

\draw (0,0,0) node [circle,draw=black,fill=white] {$\F$};
\draw (-2,-2,-4) node [circle,draw=black,fill=white]  {$\F^2$};
\draw (-2,-2,-2) node [circle,draw=black,fill=white] {$\F^3$};
\draw (-4,-4,-4) node [circle,draw=black,fill=white]  {$\F^4$};
\draw[dashed] (-4,-4,-2)--(-2,-4,-2);
\draw (-2,-4,-2) node [circle,draw=black,fill=white] {$\F^2$};
\draw (-4,-2,-2) node [circle,draw=black,fill=white]  {$\F^2$};

\end{tikzpicture}
\caption{$\HFL$ for the (3,3) torus link: $\F^2$  on dashed thick lines;  $\F^4$ on solid thick lines and in the shaded region.
Top Alexander grading is $(1,1,1)$.}
\label{fig:3 3 link}
\end{figure}
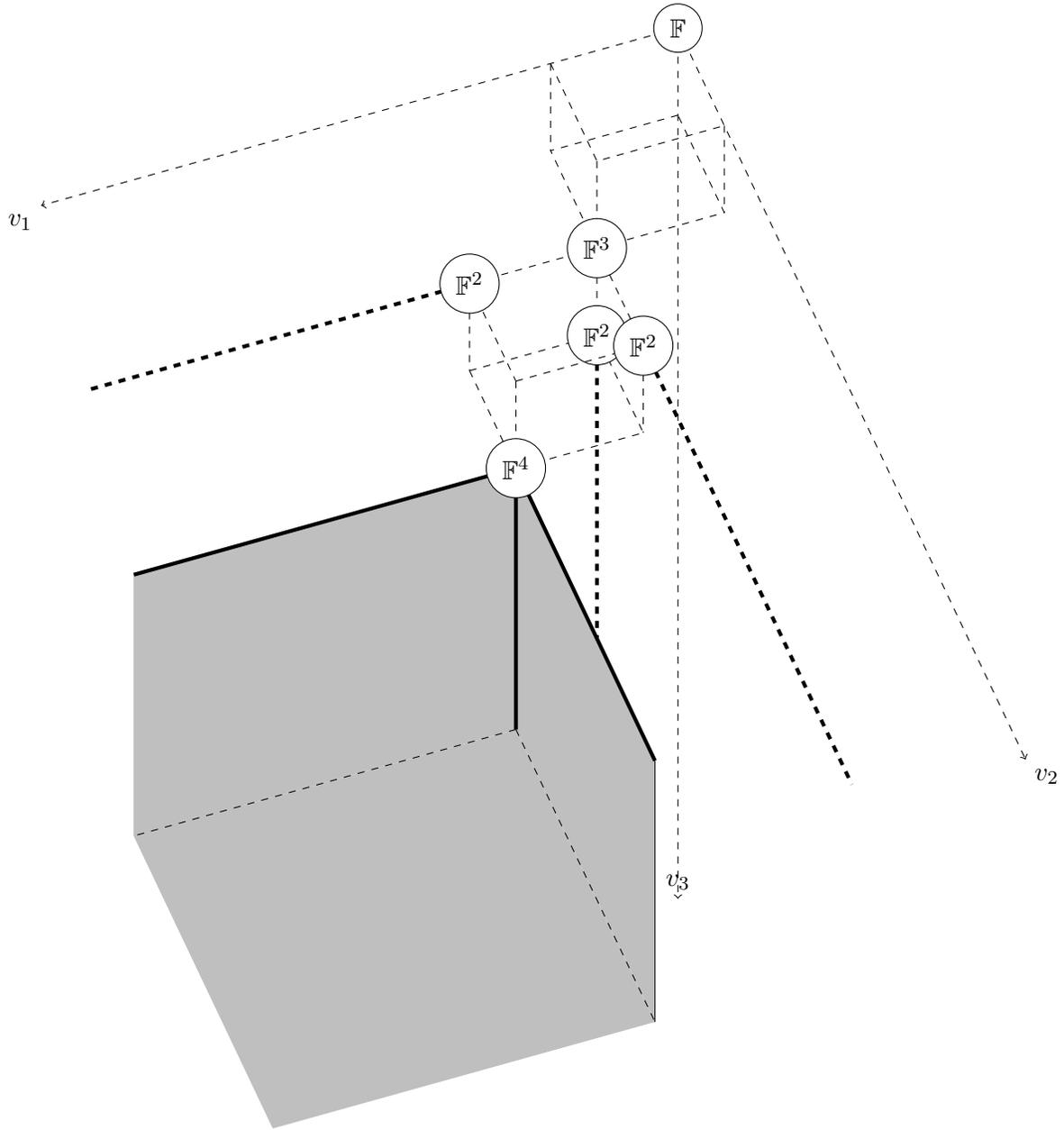

\begin{table}[ht]
\begin{tabular}{|c|c|}
\hline
Alexander grading & Homology\\
\hline
$(1,1,1)$ & $\F_{(0)}$ \\
\hline
$(0,0,0)$ & $\F_{(-2)}\oplus 2\F_{(-3)}$ \\
\hline
$(0,0,k)$, $(0,k,0)$ and $(k,0,0)$ ($k<0$)& $\F_{(2k-2)}\oplus \F_{(2k-3)}$\\
\hline
$(a,b,c)$, $a,b,c\le -1$ & $\F_{(2a+2b+2c)}\oplus 2\F_{(2a+2b+2c-1)}\oplus \F_{(2a+2b+2c-2)}$\\
\hline
\end{tabular}
\caption{Maslov gradings for the $(3,3)$ torus link}
\label{tab:3 3 link}
\end{table}

\subsection{More general torus links}

The $\HFL$ homology of the $(4,6)$ torus link is shown in Figure \ref{fig:4 6 link} and Table \ref{tab:4 6 link}.
Note that as an $\F[U_1,U_2]$ module it can be decomposed into 5 copies of $M_0\simeq \F$, a copy of $M_{1,1}$ and a copy of $M_{1,+\infty}$. In particular, the map $U_1U_2:\HFL(-2,-2)\to \HFL(-3,-3)$ is surjective with one-dimensional kernel.

\begin{figure}[ht]
\begin{tikzpicture}
\draw[dashed,->] (0,0)--(-11,0) node [below] {$v_1$};
\draw[dashed,->] (0,0)--(0,-11) node [right] {$v_2$};
 
\draw [step=1,dashed] (-10,-10) grid (0,0) ;

\fill[fill=lightgray] (-8,-8)--(-8,-10)--(-10,-10)--(-10,-8)--(-8,-8);
\draw[ultra thick] (-10,-8)--(-8,-8)--(-8,-10);

\draw[ultra thick] (-10,-6)--(-7,-6);
\draw[ultra thick] (-6,-7)--(-6,-10);

\draw (-0,-0) node [circle,draw=black,fill=white] {$\F$};
\draw (-2,-2) node [circle,draw=black,fill=white] {$\F$};
\draw (-3,-3) node [circle,draw=black,fill=white] {$\F$};
\draw (-4,-4) node [circle,draw=black,fill=white] {$\F$};
\draw (-5,-5) node [circle,draw=black,fill=white] {$\F$};
\draw (-6,-6) node [circle,draw=black,fill=white] {$\F^2$};
\draw (-7,-7) node [circle,draw=black,fill=white] {$\F$};
\draw (-8,-8) node [circle,draw=black,fill=white] {$\F^2$};

\draw  (-6,-7) node [circle,draw=black,fill=white] {$\F^2$};
\draw  (-7,-6) node [circle,draw=black,fill=white] {$\F^2$};
 
\draw (0,0.7) node {$4$}; 
\draw (-1,0.7) node {$3$}; 
\draw (-2,0.7) node {$2$}; 
\draw (-3,0.7) node {$1$}; 
\draw (-4,0.7) node {$0$}; 
\draw (-5,0.7) node {$-1$}; 
\draw (-6,0.7) node {$-2$}; 
\draw (-7,0.7) node {$-3$}; 
\draw (-8,0.7) node {$-4$}; 
\draw (-9,0.7) node {$-5$};  

\draw (0.7,0) node {$4$}; 
\draw (0.7,-1) node {$3$}; 
\draw (0.7,-2) node {$2$}; 
\draw (0.7,-3) node {$1$}; 
\draw (0.7,-4) node {$0$}; 
\draw (0.6,-5) node {$-1$}; 
\draw (0.6,-6) node {$-2$}; 
\draw (0.6,-7) node {$-3$}; 
\draw (0.6,-8) node {$-4$}; 
\draw (0.6,-9) node {$-5$};

\end{tikzpicture}
\caption{$\HFL$ for the (4,6) torus link: $\F^2$  on thick lines and in the grey region}
\label{fig:4 6 link}
\end{figure}
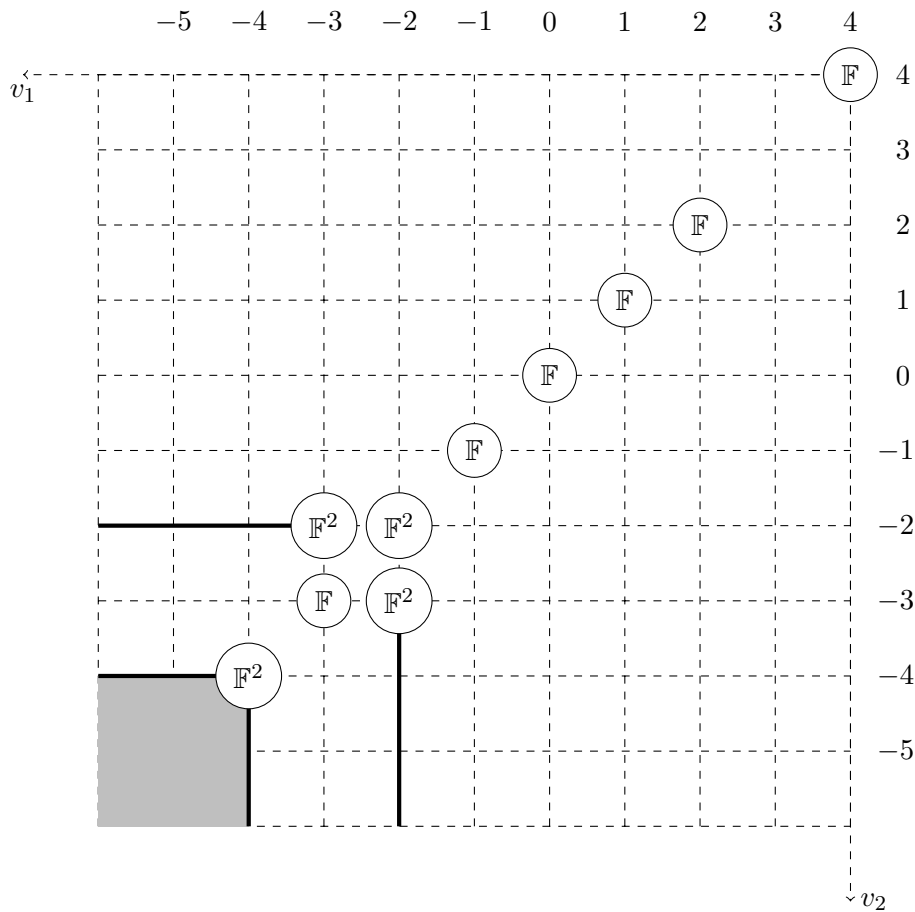

\begin{table}[ht]
\begin{tabular}{|c|c|}
\hline
Alexander grading & Homology\\
\hline
$(4,4)$ & $\F_{(0)}$ \\
\hline
$(2,2)$ & $\F_{(-2)}$\\
\hline
$(1,1)$ & $\F_{(-4)}$\\
\hline
$(0,0)$ & $\F_{(-6)}$ \\
\hline
$(-1,-1)$ & $\F_{(-8)}$ \\
\hline
$(-2,k)$ and $(k,-2)$, $k\le -2$ & $\F_{(2k-6)}\oplus \F_{(2k-7)}$\\
\hline
$(-3,-3)$ & $\F_{(-12)}$ \\
\hline
$(a,b)$, $a,b\le -4$ & $\F_{(2a+2b)}\oplus \F_{(2a+2b-1)}$\\
\hline
\end{tabular}
\caption{Maslov gradings for the $(4,6)$ torus link}
\label{tab:4 6 link}
\end{table}

\subsection{Non-algebraic example}

In this subsection we compute the Heegaard-Floer homology for the $(4,6)$-cable of the trefoil. Its components are $(2,3)$-cables of the trefoil,
which are known to be L-space knots (cf. \cite{HeddencablingII}), but not algebraic knots.  By Theorem \ref{thm:cablelink}, the $(4,6)$-cable of the trefoil is an L-space link,
but its homology is not covered by \cite{gn}.

The Alexander polynomial of the $(2,3)$-cable of the trefoil equals:
$$
\Delta_{T_{2,3}}(t)=\frac{(t^6-t^{-6})(t^{1/2}-t^{-1/2})}{(t^{3/2}-t^{-3/2})(t^2-t^{-2})},
$$
hence the Euler characteristic of its Heegaard-Floer homology equals
$$
\chi_{2,3}(t)=\frac{\Delta_{T_{2,3}}(t)}{1-t^{-1}}=t^3+1+t^{-1}+t^{-3}+t^{-4}+\ldots
$$
By \eqref{alexander cabling}, the bivariate Alexander polynomial of the $(4,6)$-cable equals:
$$
\chi_{4,6}(t_1,t_2)=\chi_{2,3}(t_1\cdot t_2)((t_1t_2)^{3}-(t_1t_2)^{-3})
$$
$$
=(t_1t_2)^{6}+(t_1t_2)^{3}+(t_1t_2)^{2}+(t_1t_2)^{-1}+(t_1t_2)^{-2}+(t_1t_2)^{-5}.
$$
The nonzero Heegaard-Floer homology are shown in Figure \ref{fig:4 6 cable of trefoil} and the corresponding Maslov gradings are given in Table \ref{tab:4 6 cable of trefoil}. Note that as $\F[U_1,U_2]$ module it can be decomposed in the following way: 
$$
\HFL\simeq 4M_{0}\oplus M_{1,1}\oplus M_{1,2}\oplus M_{1,+\infty}.
$$

\bibliographystyle{amsalpha}

\bibliography{bib}

\begin{table}[!ht]
\begin{tabular}{|c|c|}
\hline
Alexander grading & Homology\\
\hline
$(6,6)$ & $\F_{(0)}$ \\
\hline
$(3,3)$ & $\F_{(-2)}$ \\
\hline
$(2,2)$ & $\F_{(-4)}$ \\
\hline
$(0,k)$ and $(k,0)$, $k\ge 0$ & $\F_{(2k-6)}\oplus \F_{(2k-7)}$\\
\hline
$(-1,-1)$ & $\F_{(-10)}$\\
\hline
$(-2,-2)$ & $\F_{(-12)}$\\
\hline
$(-3,k)$ and $(k,-3)$, $k\ge -3$ & $\F_{(2k-8)}\oplus \F_{(2k-9)}$\\
\hline
$(-4,k)$ and $(k,-4)$, $k\ge 10$ &  $\F_{(2k-10)}\oplus \F_{(2k-11)}$\\
\hline
$(-5,-5)$ & $\F_{(-22)}$ \\
\hline
$(a,b)$, $a,b\le -6$ &  $\F_{(2a+2b)}\oplus \F_{(2a+2b-1)}$\\
\hline
\end{tabular}
\caption{Maslov gradings for the (4,6) cable of the trefoil}
\label{tab:4 6 cable of trefoil}
\end{table}

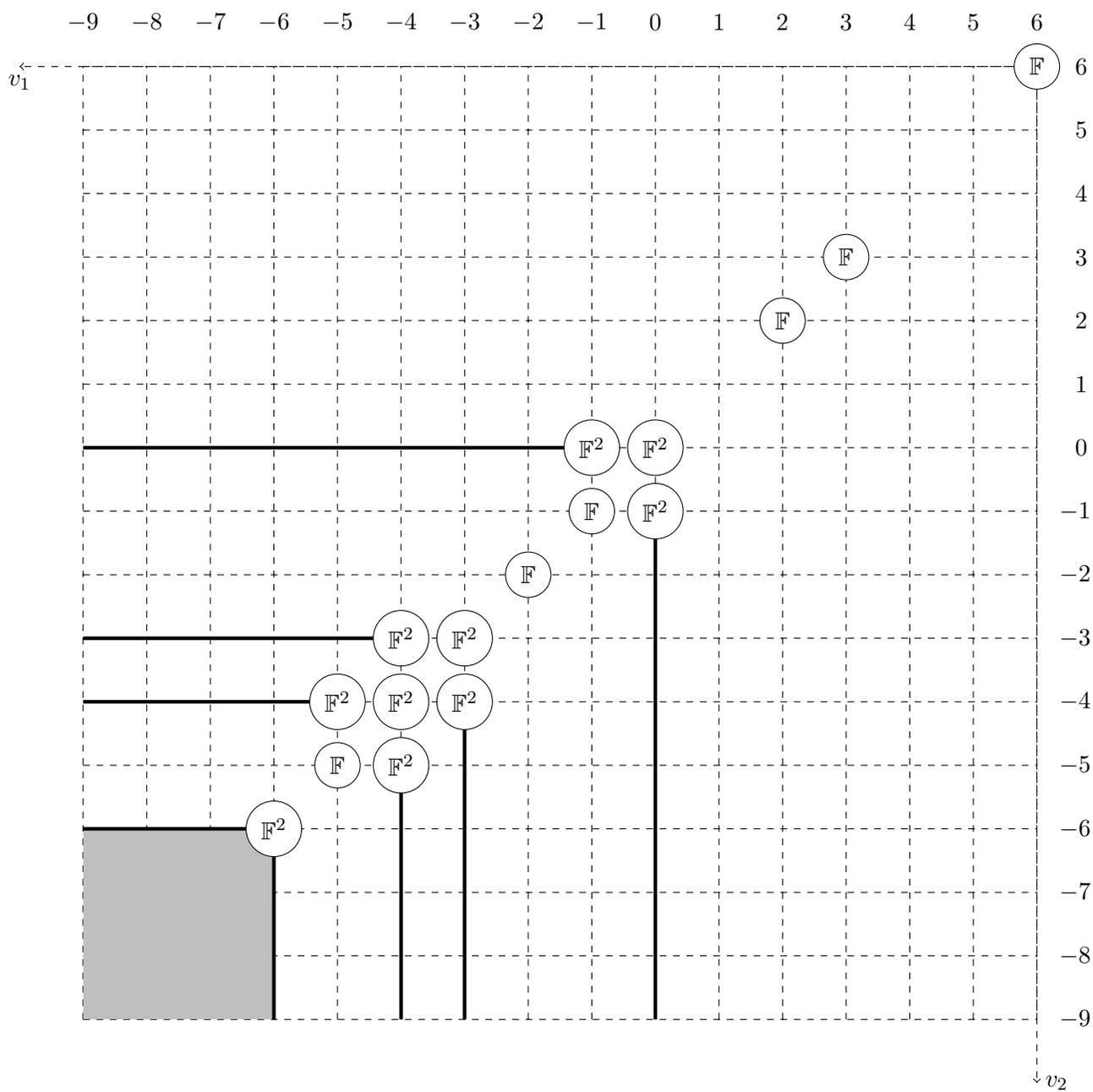
\begin{figure}[!ht]
\begin{tikzpicture}
\draw[dashed,->] (0,0)--(-16,0) node [below] {$v_1$};
\draw[dashed,->] (0,0)--(0,-16) node [right] {$v_2$};
 
\draw [step=1,dashed] (-15,-15) grid (0,0);

\fill[fill=lightgray] (-12,-12)--(-12,-15)--(-15,-15)--(-15,-12)--(-12,-12);
\draw[ultra thick] (-15,-12)--(-12,-12)--(-12,-15);

\draw[ultra thick] (-15,-6)--(-7,-6);
\draw[ultra thick] (-6,-7)--(-6,-15);
\draw[ultra thick] (-15,-9)--(-10,-9);
\draw[ultra thick] (-9,-10)--(-9,-15);
\draw[ultra thick] (-15,-10)--(-11,-10);
\draw[ultra thick] (-10,-11)--(-10,-15);

\draw (0,0) node [circle,draw=black,fill=white] {$\F$};
\draw (-3,-3) node [circle,draw=black,fill=white] {$\F$};
\draw (-4,-4) node [circle,draw=black,fill=white] {$\F$};
\draw (-6,-6) node [circle,draw=black,fill=white] {$\F^2$};
\draw (-7,-7) node [circle,draw=black,fill=white] {$\F$};
\draw (-8,-8) node [circle,draw=black,fill=white] {$\F$};
\draw (-9,-9) node [circle,draw=black,fill=white] {$\F^2$};
\draw (-10,-10) node [circle,draw=black,fill=white] {$\F^2$};
\draw (-11,-11) node [circle,draw=black,fill=white] {$\F$};
\draw (-12,-12) node [circle,draw=black,fill=white] {$\F^2$};

\draw  (-6,-7) node [circle,draw=black,fill=white] {$\F^2$};
\draw  (-7,-6) node [circle,draw=black,fill=white] {$\F^2$};
\draw  (-9,-10) node [circle,draw=black,fill=white] {$\F^2$};
\draw  (-10,-9) node [circle,draw=black,fill=white] {$\F^2$};
\draw  (-10,-11) node [circle,draw=black,fill=white] {$\F^2$};
\draw  (-11,-10) node [circle,draw=black,fill=white] {$\F^2$};
 
\draw (0,0.7) node {$6$}; 
\draw (-1,0.7) node {$5$}; 
\draw (-2,0.7) node {$4$}; 
\draw (-3,0.7) node {$3$}; 
\draw (-4,0.7) node {$2$}; 
\draw (-5,0.7) node {$1$}; 
\draw (-6,0.7) node {$0$}; 
\draw (-7,0.7) node {$-1$}; 
\draw (-8,0.7) node {$-2$}; 
\draw (-9,0.7) node {$-3$};   
\draw (-10,0.7) node {$-4$}; 
\draw (-11,0.7) node {$-5$}; 
\draw (-12,0.7) node {$-6$}; 
\draw (-13,0.7) node {$-7$};    
\draw (-14,0.7) node {$-8$}; 
\draw (-15,0.7) node {$-9$};    
 
\draw (0.7,0) node {$6$}; 
\draw (0.7,-1) node {$5$}; 
\draw (0.7,-2) node {$4$}; 
\draw (0.7,-3) node {$3$}; 
\draw (0.7,-4) node {$2$}; 
\draw (0.7,-5) node {$1$}; 
\draw (0.7,-6) node {$0$}; 
\draw (0.6,-7) node {$-1$}; 
\draw (0.6,-8) node {$-2$}; 
\draw (0.6,-9) node {$-3$};  
\draw (0.6,-10) node {$-4$}; 
\draw (0.6,-11) node {$-5$}; 
\draw (0.6,-12) node {$-6$}; 
\draw (0.6,-13) node {$-7$}; 
\draw (0.6,-14) node {$-8$}; 
\draw (0.6,-15) node {$-9$}; 
 
\end{tikzpicture}
\caption{$\HFL$ for  the (4,6) cable of the trefoil: $\F^2$  on thick lines and in the grey region}
\label{fig:4 6 cable of trefoil}
\end{figure}

\end{document}